\date{}
\newtheorem{thm}{Theorem}[section]
\newtheorem*{thm*}{Theorem}
\newtheorem{defn}[thm]{Definition}
\newtheorem{Hypothese}[thm]{Hypothesis}
\newtheorem{rem}[thm]{Remark}
\newtheorem{prop}[thm]{Proposition}
\newtheorem{lem}[thm]{Lemma}
\newtheorem{cor}[thm]{Corollary}
\newcommand{\fonc}[5]{
 \begin{array}{cccc}
 #1: & #2 & \longrightarrow & #3\\
     & #4 & \longmapsto & #5
 \end{array}
}
\newcommand{\appl}[4]{
 \begin{array}{cccc}
   #1 & \longrightarrow & #2\\
   #3 & \longmapsto & #4
 \end{array}
}
\begin{document}

\title[The Weil-étale fundamental group I]{The Weil-étale fundamental group of a number field I}
\author{Baptiste Morin}
\address{Department of Mathematics, Caltech, Pasadena CA 91125, USA}

\maketitle

\begin{abstract}
Lichtenbaum has conjectured in \cite{Lichtenbaum} the existence of a Grothendieck topology for an arithmetic scheme $X$ such that the Euler characteristic of the cohomology groups of the constant sheaf $\mathbb{Z}$ with compact support at infinity gives, up to sign, the leading term of the zeta-function $\zeta_X(s)$ at $s=0$. In this paper we consider the category of sheaves $\bar{X}_L$ on this conjectural site for $X=Spec(\mathcal{O}_F)$ the spectrum of a number ring. We show that $\bar{X}_L$ has, under natural topological assumptions, a well defined fundamental group whose abelianization is isomorphic, as a topological group, to the Arakelov Picard group of $F$. This leads us to give a list of topological properties that should be satisfied by $\bar{X}_L$. These properties can be seen as a global version of the axioms for the Weil group. Finally, we show that any topos satisfying these properties gives rise to complexes of \'etale sheaves computing the expected Lichtenbaum cohomology.
\end{abstract}

\footnotetext{ {\bf 2000 Mathematics subject classification:}
14F20 (primary) 14G10 (secondary).\\
{\bf Keywords:} étale
cohomology, Weil-étale cohomology, topos theory, fundamental groups, Dedekind zeta functions.}

\section{Introduction}
Lichtenbaum has conjectured in \cite{Lichtenbaum} the existence of a Grothendieck topology for an arithmetic scheme $X$ such that the Euler characteristic of the cohomology groups of the constant sheaf $\mathbb{Z}$ with compact support at infinity gives, up to sign, the leading term of the zeta-function $\zeta_X(s)$ at $s=0$. There should exist motivic complexes of sheaves $\mathbb{Z}(n)$ giving the special value of $\zeta_X(s)$ at any non-positive integer $s=n$, and this formalism should extend to motivic $L$-functions. In this paper, this conjectural cohomology theory will be called the \emph{conjectural Lichtenbaum cohomology}. This cohomology is well defined for schemes of finite type over finite fields, by the work of Lichtenbaum and Geisser. But the situation for flat schemes over $\mathbb{Z}$ is more difficult, and far from being understood even in the most simple case $X=Spec(\mathbb{Z})$.

We denote by $\bar{X}$ the Arakelov compactification of a number ring $X=Spec(\mathcal{O}_F)$. Using the Weil group $W_F$, Lichtenbaum has defined a first candidate for his conjectural cohomology of number rings, which he calls the Weil-étale cohomology. He has shown that the resulting cohomology groups with compact support $H_{Wc}^i(X,\mathbb{Z})$, assuming that they vanish for $i\geq3$, are indeed miraculously related to the special value of the Dedekind zeta function $\zeta_F(s)$ at $s=0$. However, Matthias Flach has shown in \cite{MatFlach} that the groups $H_{W}^i(\bar{X},\mathbb{Z})$ (hence $H_{Wc}^i(X,\mathbb{Z})$) are in fact infinitely generated for any $i\geq4$ even. This shows that Lichtenbaum's definition is not yet the right one, as it is mentioned in \cite{Lichtenbaum}. Giving the correct (site of) definition for the conjectural Lichtenbaum cohomology of number rings is a deep problem.

This problem can not be attacked directly. One has first to figure out the basic properties that need to be satisfied by Lichtenbaum's conjectural site. This question only makes sense if one considers the category of sheaves of sets on Lichtenbaum's conjectural site, i.e. the associated topos, since many non equivalent sites can produce the same topos. In this paper, this conjectural topos will be denoted by $\bar{X}_{L}$ and will be called the \emph{conjectural Lichtenbaum topos}. The first goal of this paper is to figure out the basic topological properties that must be satisfied by the conjectural Lichtenbaum topos of a number ring. To this aim, we give in section \ref{subsect-expected-properties} a list of 9 necessary axioms for $\bar{X}_{L}$. Our axioms globalize the usual axioms for the Weil group. From now on, we refer to them as \emph{Axioms $(1)-(9)$}. They are all satisfied by the (naturally defined) Weil-\'etale topos of a smooth projective curve over a finite field, and they are in adequation with the work of Deninger (see \cite{Deninger}).

In Sections 4 and 5, we show that Axioms $(1)-(9)$ must be satisfied by $\bar{X}_{L}$ under natural topological assumptions. Our argument is based on the following observation. The cohomology of the topos $\bar{X}_{L}$ associated to the arithmetic curve $\bar{X}=\overline{Spec(\mathcal{O}_F)}$, with coefficients in  $\mathbb{Z}$ and $\widetilde{\mathbb{R}}$, must be the same as the one computed in \cite{Lichtenbaum} in degrees $i\leq3$ and must vanish in degrees $i\geq4$. This is necessary in order to obtain the correct cohomological interpretation (due to Lichtenbaum) of the analytic class number formula. Then we show that such a topos has, under natural topological assumptions, a well defined fundamental group whose abelianization is isomorphic, \emph{as a topological group}, to the Arakelov Picard group $Pic(\bar{X})$ (see Theorem \ref{main-thm}). In order to prove this result, we express Pontryagin duality in terms of sheaves and we use the notion of topological fundamental groups (as a special case of the fundamental group of a connected and locally connected topos over an arbitrary base topos with a point). Moreover, this argument also applies to the case of an arbitrary connected \'etale $\bar{X}$-scheme $\bar{U}$. Here we find that the abelian fundamental group of the slice topos $\bar{X}_{L}/\bar{U}$ must be topologically isomorphic to the $S$-id\`ele class group canonically associated to $\bar{U}$.

Axioms $(1)-(9)$ give a partial description of the conjectural Lichtenbaum topos. This description, which can be seen as a global version of the axioms for the Weil group (see \cite{Tate}), is based on an interpretation of the $S$-idele class groups in terms of topological fundamental groups. Section \ref{Sect-Cohomology} is devoted to the proof of the following result.
\begin{thm}\label{thm-intro-L-formalism}\textbf{\emph{(Lichtenbaum's formalism)}}
Assume that $F$ is totally imaginary. Let $\gamma:\bar{X}_{L}\rightarrow\bar{X}_{et}$ be any topos satisfying Axioms $(1)-(9)$. We denote by $\varphi:X_{L}\rightarrow\bar{X}_{L}$ the natural open embedding, and we set $H_c^n(X_{L},\widetilde{\mathbb{R}}):=H^n(\bar{X}_{L},\varphi_!\widetilde{\mathbb{R}})$. The following is true:
\begin{itemize}
\item $\mathbb{H}^n(\bar{X}_{et},\tau_{\leq2}R\gamma_*(\varphi_!\mathbb{Z}))$ is finitely generated, zero for $n\geq4$ and
the canonical map $$\mathbb{H}^n(\bar{X}_{et},\tau_{\leq2}R\gamma_*(\varphi_!\mathbb{Z}))\otimes\mathbb{R}\longrightarrow H_c^n({X}_{L},\widetilde{\mathbb{R}})$$
is an isomorphism for any $n\geq0$.
\item There exists a fundamental class $\theta\in{H}^1(\bar{X}_{L},\widetilde{\mathbb{R}})$. The complex of finite dimensional vector spaces
    $$...\rightarrow{H}_c^{n-1}({X}_{L},\widetilde{\mathbb{R}})\rightarrow
 {H}_c^n({X}_{L},\widetilde{\mathbb{R}})\rightarrow
 {H}_c^{n+1}({X}_{L},\widetilde{\mathbb{R}})\rightarrow...$$
    defined by cup product with $\theta$, is acyclic.
\item Let $B^n$ be a basis of $\mathbb{H}^n({X}_{et},\tau_{\leq2}R\gamma_*(\varphi_!\mathbb{Z}))/\textsl{tors}$. The leading term coefficient $\zeta^*_F(0)$ at $s=0$ is given by the Lichtenbaum Euler characteristic :
$$\zeta^*_F(0)=\pm\prod_{n\geq0}|\mathbb{H}^n(\bar{X}_{et},\tau_{\leq2}R\gamma_*(\varphi_!\mathbb{Z}))_{\textsl{tors}}|^{(-1)^n}/
\textsl{det}(H_c^n({X}_{L},\widetilde{\mathbb{R}}),\theta,B^*)$$
\end{itemize}
\end{thm}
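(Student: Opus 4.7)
The overall strategy is to push everything forward to $\bar{X}_{et}$ via $\gamma$ and exploit the fact that, by Axioms $(1)$--$(9)$, the topos $\bar{X}_L$ has a controlled local structure at finite and archimedean places, while $\bar{X}_{et}$ is a finite-dimensional site whose cohomology is well understood. The Leray spectral sequence for $\gamma$,
$$E_2^{p,q}=H^p(\bar{X}_{et},R^q\gamma_*(\varphi_!\mathbb{Z}))\Rightarrow H^{p+q}(\bar{X}_L,\varphi_!\mathbb{Z}),$$
organizes the computation, and applying the truncation $\tau_{\leq2}$ to $R\gamma_*$ is precisely what discards the high-degree pathology identified by Flach in \cite{MatFlach}, while leaving intact the low-degree information constrained by the axioms.

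For the first assertion, I would first determine the stalks of $R^q\gamma_*(\varphi_!\mathbb{Z})$ locally using the axioms. At a closed point $v\in X$, the axioms force the stalk to be the cohomology of the local Weil group $W_{F_v}$ with values in $\mathbb{Z}$, which is finitely generated and concentrated in low degrees. At a complex archimedean place (possible since $F$ is totally imaginary), the local contribution is computed similarly via $W_{\mathbb{C}}=\mathbb{C}^{\times}$, with $\varphi_!$ accounting for compact support at infinity. The hypercohomology $\mathbb{H}^n(\bar{X}_{et},\tau_{\leq2}R\gamma_*(\varphi_!\mathbb{Z}))$ then sits in a spectral sequence whose $E_2$-terms are finitely generated, and the vanishing for $n\geq4$ follows by combining the truncation level $q\leq 2$ with the fact that $\bar{X}_{et}$ has low cohomological dimension on the relevant sheaves (totally imaginary case). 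The comparison with $H_c^n(X_L,\widetilde{\mathbb{R}})$ is obtained by running the same argument for the topological sheaf $\widetilde{\mathbb{R}}$, where a universal-coefficient-type argument yields that tensoring the integral computation with $\mathbb{R}$ recovers the real one.

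For the second assertion, the class $\theta\in H^1(\bar{X}_L,\widetilde{\mathbb{R}})$ is constructed using Theorem \ref{main-thm}: the identification $H^1(\bar{X}_L,\widetilde{\mathbb{R}})\cong\mathrm{Hom}_{cont}(\pi_1^{ab}(\bar{X}_L),\mathbb{R})\cong\mathrm{Hom}_{cont}(Pic(\bar{X}),\mathbb{R})$ allows me to take $\theta$ to be the logarithm of the Arakelov norm on $Pic(\bar{X})$. The acyclicity of the cup-product-with-$\theta$ complex then reduces to computing the cohomology ring $H^*(\bar{X}_L,\widetilde{\mathbb{R}})$: by Pontryagin duality, this ring is generated in low degrees by classes coming from the topological abelian fundamental group, and the resulting Koszul-type structure forces the complex to be acyclic, exactly as in the function-field analogue of a smooth projective curve over a finite field.

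The third assertion then follows by combining these with Lichtenbaum's analysis. By construction, the groups $\mathbb{H}^n(\bar{X}_{et},\tau_{\leq2}R\gamma_*(\varphi_!\mathbb{Z}))$ for $n\leq3$ agree with Lichtenbaum's $H^n_{Wc}(X,\mathbb{Z})$ from \cite{Lichtenbaum}, and the fundamental class $\theta$ coincides with his, so the truncated Lichtenbaum Euler characteristic evaluates to the expression he already identified with $\pm\zeta_F^*(0)$ via the analytic class number formula. The principal obstacle I expect is the first step: extracting the precise local structure of $R\gamma_*(\varphi_!\mathbb{Z})$ from the topos-theoretic Axioms $(1)$--$(9)$ and translating it into classical Weil-cohomology data; once this dictionary is in place, the finite generation, the higher-degree vanishing, and the Koszul acyclicity all follow by relatively formal spectral sequence manipulations, and the zeta value formula reduces to citing Lichtenbaum.
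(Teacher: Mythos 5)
Your overall strategy matches the paper's: push forward along $\gamma$, use the hypercohomology spectral sequence $H^i(\bar{X}_{et},\mathcal{H}^j(\tau_{\leq 2}R\gamma_*(\varphi_!\mathbb{Z})))\Rightarrow\mathbb{H}^{i+j}$, construct $\theta$ via $H^1(\bar{X}_L,\widetilde{\mathbb{R}})\simeq\mathrm{Hom}_c(Pic(\bar{X}),\mathbb{R})$, and close the loop with the analytic class number formula. But there are three places where your plan papers over a real difficulty.

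First, the sheaf $R^2\gamma_*\mathbb{Z}$ is not a local object and cannot be handled by stalks at closed points. The axioms identify $\pi_1(\bar{U}_L)^{ab}$ with the $S$-id\`ele class group $C_{\bar{U}}$, so $R^2\gamma_*\mathbb{Z}$ is the sheaf associated to $\bar{U}\mapsto (C^1_{\bar{U}})^D$. The vanishing $\mathbb{H}^n=0$ for $n\geq 4$ does not follow from any cohomological-dimension bound on $\bar{X}_{et}$ (indeed $H^3(\bar{X}_{et},\mathbb{Z})\neq 0$); it requires the specific fact that $R^2\gamma_*\mathbb{Z}$ is acyclic for the global sections functor, with $H^0(\bar{X}_{et},R^2\gamma_*\mathbb{Z})=\mathrm{Hom}(\mathcal{O}_F^\times,\mathbb{Q})$. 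This is the content of Section 5.3 of the paper (using Tate's structure theorem for the connected component $D^1_K$ and an open/closed decomposition), and it is the technical heart of the finite-generation and vanishing statements. Your plan has no replacement for this step. Relatedly, the group appearing at a closed point $v$ is the residue Weil group $W_{k(v)}=F_v^\times/\mathcal{O}^\times_{F_v}$ (isomorphic to $\mathbb{Z}$ or $\mathbb{R}$), not the local Weil group $W_{F_v}$; at a complex place this is $\mathbb{R}_{>0}$, not $\mathbb{C}^\times$.

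Second, the acyclicity of $\cup\theta$ is not a Koszul-type formal consequence of low-degree generation. The compactly supported complex $H_c^*(X_L,\widetilde{\mathbb{R}})$ is concentrated in degrees $1$ and $2$ (both of dimension $r_2-1$), and the whole content is that $\cup\theta:H_c^1\to H_c^2$ is an isomorphism. The paper proves this by computing $\cup\theta$ explicitly as $(v)_{v\mid\infty}\mapsto(\theta_v\circ p_v)$ using Axiom (8) and the $v$-fundamental classes, and then identifying it (Corollary \ref{cor-cuptheta}) with the transpose of the classical regulator map $\mathcal{O}_F^\times\otimes\mathbb{R}\to(\sum_{v\mid\infty}\mathbb{R})^+$. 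The nondegeneracy is thus exactly the nonvanishing of the regulator; this is a concrete arithmetic input, not a formal property of the cohomology ring, and your sketch does not produce it.

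Third, the isomorphism $\mathbb{H}^n(\bar{X}_{et},\tau_{\leq 2}R\gamma_*(\varphi_!\mathbb{Z}))\otimes\mathbb{R}\simeq H_c^n(X_L,\widetilde{\mathbb{R}})$ is not a universal-coefficient statement, since the two sides live in different topoi. The paper constructs the map $R_n$ from the sheaf morphism $\varphi_!\mathbb{Z}\to\varphi_!\widetilde{\mathbb{R}}$ and then shows (Theorem \ref{thm-iso-Rn}) that $R_2$ is the inverse of the isomorphism induced by $\mathcal{O}_F^\times\to(\oplus_{v\mid\infty}W_{k(v)})^1$; this requires tracking the connecting maps through the $\mathbb{S}^1$-sequence and Axiom (8), not a general coefficient argument. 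Once these three points are filled in, your reduction to the class number formula is exactly the paper's.
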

In \cite{Fundamental-group-II}, we construct a topos (the Weil-étale topos) which satisfies Axioms $(1)-(9)$.\\

$\mathbf{Acknowledgments}.$ I am very grateful to Matthias
Flach, Masanori Morishita and Fr\'ed\'eric Paugam for their comments.

\section{Preliminaries}

\subsection{Basic properties of geometric morphisms}

Let $\mathcal{S}$ and $\mathcal{S}'$ be two Grothendieck topoi. A \emph{(geometric) morphism of topoi}
$$f:=(f^*,f_*):\mathcal{S}'\longrightarrow\mathcal{S}$$
is defined as a pair of functors $(f^*,f_*)$, where $f^*:\mathcal{S}\rightarrow \mathcal{S}'$ is left adjoint to $f_*:\mathcal{S}'\rightarrow \mathcal{S}$ and $f^*$ is left exact (i.e. $f^*$ commutes with finite projective limits). One can also define such a morphism as a left exact functor $f^*:\mathcal{S}\rightarrow \mathcal{S}'$ commuting with arbitrary inductive limits. Indeed, in this case $f^*$ has a uniquely determined right adjoint $f_*$.

If $X$ is an object of $\mathcal{S}$, then the slice category $\mathcal{S}/X$, of objects of $\mathcal{S}$ over $X$, is a topos as well. The base change functor
$$\appl{\mathcal{S}}{\mathcal{S}/X}{Y}{Y\times X}$$
is left exact and commutes with arbitrary inductive limits, since inductive limits are universal in a topos. We obtain a morphism
$$\mathcal{S}/X\longrightarrow \mathcal{S}.$$
Such a morphism is said to be a \emph{localization morphism} or a \emph{local homeomorphism} (the term local homeomorphism is inspired by the case where $\mathcal{S}$ is the topos of sheaves on some topological space). For any morphism $f:\mathcal{S}'\rightarrow\mathcal{S}$ and any object $X$ of $\mathcal{S}$, there is a natural morphism
$$f_{/X}:\mathcal{S}'/f^*X\longrightarrow\mathcal{S}/X.$$
The functor $f_{/X}^*$ is defined in the obvious way: $f^*_{/X}(Y\rightarrow X)=(f^*Y\rightarrow f^*X)$. The direct image functor $f_{/X,*}$ sends $Z\rightarrow f^*X$ to $f_*Z\times_{f_*f^*X}X\rightarrow X$, where $X\rightarrow f_*f^*X$ is the adjunction map. The morphism $f_{/X}$ is a pull-back of $f$, in the sense that the square
\[\xymatrix{
\mathcal{S}'/f^*X\ar[r]^{f_{/X}}\ar[d]&\mathcal{S}/X\ar[d]\\
\mathcal{S}'\ar[r]^{f}&\mathcal{S}
}\]
is commutative and 2-cartesian. In other words, the 2-fiber product $\mathcal{S}'\times_{\mathcal{S}}\mathcal{S}/X$ can be defined as the slice topos $\mathcal{S}'/f^*X$.

A morphism $f:\mathcal{S}'\rightarrow\mathcal{S}$ is said to be \emph{connected} if $f^*$ is fully faithful. It is \emph{locally connected} if $f^*$ has an $\mathcal{S}$-indexed left adjoint $f_!$ (see \cite{elephant}
C3.3). These definitions generalize the usual ones for topological spaces: if $T$ is a topological space, consider the unique morphism $Sh(T)\rightarrow\underline{Sets}$ where $Sh(T)$ is the category of \'etal\'e spaces over $T$. For example a localization morphism $\mathcal{S}/X\rightarrow \mathcal{S}$ is always locally connected (here $f_!(Y\rightarrow X)=Y$), but is connected if and only if $X$ is the final object of $\mathcal{S}$.

A morphism $f:\mathcal{S}'\rightarrow\mathcal{S}$ is said to be an \emph{embedding} when $f_*$ is fully faithful. It is an \emph{open embedding} if $f$ factor through $f:\mathcal{S}'\simeq\mathcal{S}/X\rightarrow\mathcal{S}$, where $X$ is a subobject of the final object of $\mathcal{S}$. Then the essential image $\mathcal{U}$ of the functor $f_*$ is said to be an \emph{open subtopos} of $\mathcal{S}$. The \emph{closed complement} $\mathcal{F}$ of $\mathcal{U}$ is the strictly full subcategory of $\mathcal{S}$ consisting in objects $Y$ such that $Y\times X$ is the final object of $\mathcal{U}$ (i.e. $f^*Y$ is the final object of $\mathcal{S}'$). A \emph{closed subtopos} $\mathcal{F}$ of $\mathcal{S}$ is a strictly full subcategory which is the closed complement of an open subtopos. A morphism of topoi $i:\mathcal{E}\rightarrow\mathcal{S}$ is said to be a \emph{closed embedding} if $i$ factors through $i:\mathcal{E}\simeq\mathcal{F}\rightarrow\mathcal{S}$ where $\mathcal{F}$ is a closed subtopos of $\mathcal{S}$.

A \emph{subtopos} of $\mathcal{S}$ is a strictly full subcategory $\mathcal{S}'$ of $\mathcal{S}$ such that the inclusion functor $i:\mathcal{S}'\hookrightarrow\mathcal{S}$ is the direct image of a morphism of topoi (i.e. $i$ has a left exact left adjoint). A morphism $f:\mathcal{S}'\rightarrow\mathcal{S}$ is said to be \emph{surjective} if $f^*$ is faithful. Any morphism $f:\mathcal{E}\rightarrow\mathcal{S}$ can be decomposed as a surjection $\mathcal{E}\rightarrow Im(f)$ followed by a an embedding $Im(f)\rightarrow\mathcal{S}$, where $Im(f)$ is a subtopos of $\mathcal{S}$, which is called the \emph{image of $f$} (see \cite{SGA4} IV. 9.1.7.2).
\subsection{The topos $\mathcal{T}$ of locally compact topological spaces}

We denote by $Top$ the category of locally compact Hausdorff topological spaces and continuous maps. This category is endowed with the open cover topology $\mathcal{J}_{op}$, which is generated by the following pretopology: a family of morphisms $(X_{\alpha}\rightarrow X)_{\alpha\in A}$ is in $Cov(X)$ if and only if $(X_{\alpha}\rightarrow X)_{\alpha\in A}$ is an open cover in the usual sense. We denote by $\mathcal{T}$ the topos of sheaves of sets on this left exact site:
$$\mathcal{T}:=\widetilde{(Top,\mathcal{J}_{op})}.$$
The family of compact spaces is easily seen to be a topologically generating family for the site $(Top,\mathcal{J}_{op})$. Indeed, if $X$ is a locally compact space, then any $x\in X$ has a compact neighborhood $K_x\subseteq X$, so $(K_x\hookrightarrow X)_{x\in X}$ is a local section cover, hence a covering family for $\mathcal{J}_{op}$. In particular, if we denote by $Top^c$ the category of compact spaces, then $\mathcal{T}=\widetilde{(Top^c,\mathcal{J}_{op})}$. The Yoneda functor
$$\fonc{y}{Top}{\mathcal{T}}{X}{y(X)=Hom_{Top}(-,X)},$$
which sends a space $X$ to the sheaf represented by $X$,
is fully faithful (since $\mathcal{J}_{op}$ is subcanonical) and
commutes with arbitrary projective limits. Since the Yoneda functor is left exact, any locally compact topological group $G$ represents a group object of $\mathcal{T}$. In what follows we consider
$Top$ as a (left exact) full subcategory of $\mathcal{T}$. For
example the sheaf of $\mathcal{T}$ represented by a (locally compact Hausdorff) space $Z$
is sometimes also denoted by $Z$.

In this paper, we consider topoi defined over the topos of locally compact spaces since all sheaves, cohomology groups and fundamental groups that we use are defined by locally compact spaces. In order to use non-locally compact coefficients,
one can consider the topos $$\mathcal{T}':=\widetilde{(Top^h,\mathcal{J}_{op})}$$
where $Top^h$ is the category of Hausdorff spaces. Then for any topos $\mathcal{E}$ (connected and locally connected) over $\mathcal{T}$, we consider the base change $\mathcal{E}\times_{\mathcal{T}}\mathcal{T}'$ to obtain a (connected and locally connected) topos over $\mathcal{T}'$.

\subsection{The classifying topos of a group object}
For any topos $\mathcal{S}$ and any group object $\mathcal{G}$ in $\mathcal{S}$, we denote by $B_{\mathcal{G}}$ the category of (left) $\mathcal{G}$-object in $\mathcal{S}$. Then $B_{\mathcal{G}}$ is a topos, as it follows from Giraud's axioms, and $B_{\mathcal{G}}$ is endowed with a canonical morphism $B_{\mathcal{G}}\rightarrow\mathcal{S}$, whose inverse image functor sends an object $\mathcal{F}$ of $\mathcal{S}$ to $\mathcal{F}$ with trivial $\mathcal{G}$-action. If there is a risk of ambiguity, we denote the topos $B_{\mathcal{G}}$ by $B_{\mathcal{S}}(\mathcal{G})$. The topos $B_{\mathcal{G}}$ is said to be the classifying topos of $\mathcal{G}$ since it classifies $\mathcal{G}$-torsors. More precisely, for any topos $f:\mathcal{E}\rightarrow\mathcal{S}$ over $\mathcal{S}$, the category ${\underline{Homtop}}_{\mathcal{S}}\,(\mathcal{E},B_{\mathcal{G}})$ is equivalent to the category of $f^*\mathcal{G}$-torsors in $\mathcal{E}$ (see \cite{SGA4} IV Exercise 5.9).

\subsubsection{The classifying topos of a profinite group.}

Let $G$ be a discrete group, i.e. a group object of the final topos $\underline{Sets}$. We denote the category of $G$-sets by
$$B_G^{sm}:=B_{\underline{Sets}}(G).$$ The topos $B_G^{sm}$ is called the small classifying topos of the discrete group $G$.

If $G$ is a profinite group, then the small classifying topos $B_G^{sm}$ is defined as the category of sets on which $G$ acts continuously.

\subsubsection{The classifying topos of a topological group.}
Let $G$ be a locally compact (hence Hausdorff) topological group. Then $G$ represents a group object of $\mathcal{T}$, where $\mathcal{T}:=\widetilde{(Top,\mathcal{J}_{op})}$ is defined above. Then $$B_G:=B_{\mathcal{T}}(G)$$ is the classifying topos of the locally compact topological group $G$. One can define the classifying topos of an arbitrary topological group by enlarging the topos $\mathcal{T}$.

\subsubsection{Topological pro-groups}
In this paper, a \emph{filtered category} $I$ is a non-empty small category such that the following holds. For any objects $i$ and $j$ of $I$, there exists a third object $k$ and maps $i\leftarrow k\rightarrow j$. For any pair of maps $i\rightrightarrows j$ there exists a map $k\rightarrow i$ such that the diagram $k\rightarrow i\rightrightarrows j$ is commutative. Let $C$ be any category. A \emph{pro-object} of $C$ is a functor $\underline{X}:I\rightarrow C$, where $I$ is a filtered category. One can see a pro-object in $C$ as a diagram in $C$.
One can define the \emph{category $Pro(C)$ of pro-objects in $C$} (see \cite{SGA4} I. 8.10). The morphisms in this category can be made explicit as follows. Let $\underline{X}:I\rightarrow C$ and $\underline{Y}:J\rightarrow C$ be two pro-objects in $C$. Then one has
$$Hom_{Pro(C)}(\underline{X},\underline{Y}):=\underleftarrow{lim}_{_{j\in J}}\,\underrightarrow{lim}_{_{i\in I}}\, Hom(X_i,Y_j).$$
A pro-object $\underline{X}:I\rightarrow C$ is \emph{constant} if it is a constant functor, and $\underline{X}:I\rightarrow C$ is \emph{essentially constant} if $\underline{X}$ is isomorphic (in the category $Pro(C)$) to a constant pro-object.

\begin{defn}\label{defn-progroup}
A \emph{locally compact topological pro-group} $\underline{G}$ is a pro-object in the category of locally compact and Hausdorff topological groups. A locally compact topological pro-group is said to be \emph{strict} if any transition map $G_j\rightarrow G_i$ has local sections.
\end{defn}
If the category $C$ is a topos, then a pro-object $\underline{X}:I\rightarrow C$ in $C$ is said to be \emph{strict} when the transition map $X_i\rightarrow X_j$ is an epimorphism in $C$, for any $i\rightarrow j\in Fl(I)$. In particular, a locally compact topological pro-group $\underline{G}:I\rightarrow Gr(Top)$ pro-represents a strict pro-group-object in $\mathcal{T}$:
$$y\circ\underline{G}:I\rightarrow Gr(Top)\rightarrow Gr(\mathcal{T})$$
where $Gr(Top)$ and  $Gr(\mathcal{T})$ are the categories of group-objects in $Top$ and $\mathcal{T}$ respectively. Indeed, a continuous map of locally compact spaces $X_i\rightarrow X_j$ has local sections if and only if it induces an epimorphism $y(X_i)\rightarrow y(X_j)$ in $\mathcal{T}$. Topos theory provides a natural way to define the limit of a strict topological pro-group without any loss of information.
\begin{defn}\label{defn-classifying-topos-pro-grp}
The classifying topos of a locally compact strict pro-group $\underline{G}:I\rightarrow Gr(Top)$ is defined as
$$B_{\underline{G}}:=\underleftarrow{lim}_{I}\, B_{G_i},$$
where the the projective limit is computed in the 2-category of topoi.
\end{defn}

\subsection{The Arakelov Picard group}

Let $F$ be a number field and let $X_{\infty}$ be the set of
archimedean places of $F$. We denote by
$\bar{X}=(Spec\,\mathcal{O}_F,X_{\infty})$ the Arakelov
compactification of the ring of integers in $F$. We consider the
idèle group $I_F$ and the idèle class group $C_F$ of $F$. For any
place $v$ of $F$, we denote by $F_v$ the corresponding local field
and by $\mathcal{O}^{\times}_{F_v}$ the group of local units, i.e.
the kernel of the absolute value
$K^{\times}_v\rightarrow\mathbb{R}_{>0}$. Note that we have
$\mathcal{O}^{\times}_{F_v}=\mathbb{S}^1$ for $v$ complex and $\mathcal{O}^{\times}_{F_v}=\{\pm1\}$ for $v$ real. The group
$\mathcal{O}^{\times}_{F_v}$ is always compact. The \emph{Arakelov Picard
group} $Pic(\bar{X})$ is defined as the cokernel, endowed with the
quotient topology, of the continuous map $\prod_{v}\mathcal{O}^{\times}_{F_v}\rightarrow C_F$.
For any place $v$, the map $F_v^{\times}\rightarrow C_F$ induces a
continuous morphism
\begin{equation}\label{map-residue-Weil-group-to-Pic}
W_{k(v)}:=F_v^{\times}/\mathcal{O}^{\times}_{F_v}\longrightarrow
Pic(\bar{X})
\end{equation}
where $W_{k(v)}$ is the Weil group of the "residue field $k(v)$" at
$v\in\bar{X}$. The absolute value $C_F
\rightarrow\mathbb{R}_{>0}$ factors through $Pic(\bar{X})$. We obtain a
canonical continuous morphism $Pic(\bar{X})\rightarrow\mathbb{R}_{>0}$
endowed with a continuous section. The Arakelov class group
$Pic^1(\bar{X})$ is the kernel of this map. In other words, we have
an exact sequence of topological group
$$0\rightarrow Pic^1(\bar{X})\rightarrow Pic(\bar{X})\rightarrow\mathbb{R}_{>0}\rightarrow0.$$

\section{Pontryagin Duality and topological fundamental groups}

\subsection{Pontryagin Duality}

Let $X$ and $Y$ be two objects in a topos $\mathcal{E}$. There
exists an internal Hom-object $\underline{Hom}_{\mathcal{E}}(X,Y)$
in $\mathcal{E}$ such that there is a functorial isomorphism
\begin{equation}\label{hom-object}
Hom(Z,\underline{Hom}_{\mathcal{E}}(X,Y))\simeq Hom(Z\times
X,Y)=Hom_{\mathcal{E}/_Z}(Z\times X,Z\times Y)
\end{equation}
for any object $Z$ of $\mathcal{E}$. Indeed, the (base change)
functor $Z\rightarrow Z\times X$ commutes with (arbitrary) inductive
limits since inductive limits are universal in the topos
$\mathcal{E}$. Therefore, the contravariant functor
$$\appl{\mathcal{E}}{\underline{Set}}{Z}{Hom_{\mathcal{E}}(Z\times X,Y)}$$
sends inductive limits in $\mathcal{E}$ to projective limits in
$\underline{Set}$. Hence\underline{} this presheaf on $\mathcal{E}$
is a sheaf for the canonical topology. Since the sheaves on a topos
endowed with the canonical topology are all representable, this
functor is representable by an object
$\underline{Hom}_{\mathcal{E}}(X,Y)$ of $\mathcal{E}$. If $G$ and
$A$ are both group objects in $\mathcal{E}$ such that $A$ is abelian, then we denote by
$\underline{Hom}_{\mathcal{E}}(G,A)$ the group object of
$\mathcal{E}$ given by 
$$\appl{\mathcal{E}}{\underline{Ab}}{Z}{Hom_{Gr(\mathcal{E}/Z)}(Z\times G,Z\times A)},$$
where $\underline{Ab}$ and $Gr(\mathcal{E}/Z)$ denote respectively
the category of (discrete) abelian groups and the category of group objects
of the slice topos $\mathcal{E}/Z$.

Let $\mathcal{T}$ be the topos of sheaves on the site
$(Top,\mathcal{J}_{op})$, where $Top$ is the category
\emph{Hausdorff locally compact} topological spaces and continuous maps endowed with
the open cover topology $\mathcal{J}_{op}$. Recall that the Yoneda functor
$$\fonc{y}{Top}{\mathcal{T}}{X}{y(X)=Hom_{Top}(-,X)}$$
sending a topological space to the sheaf represented by this space
is fully faithful and
commutes with arbitrary projective limits.

Let $X$ and $Y$ be two Hausdorff locally compact topological spaces.
We denote by $\underline{Hom}_{Top}(X,Y)$ the set of continuous maps
from $X$ to $Y$ endowed with the compact-open topology. This
topological space is Hausdorff and locally compact. Then the sheaf
of $\mathcal{T}$ represented by $\underline{Hom}_{Top}(X,Y)$ is
precisely the internal object $\underline{Hom}_{\mathcal{T}}(y(X),y(Y))$
defined above, since $Hom_{Top}(X,Y)$ satisfies (\ref{hom-object}).
Indeed, one has
$$Hom_{Top}(Z\times X,Y)=Hom_{Top}(Z,\underline{Hom}_{Top}(X,Y))$$
for any Hausdorff topological spaces $Z$. Hence the sheaf
$\underline{Hom}_{\mathcal{T}}(y(X),y(Y))$ is represented by
$\underline{Hom}_{Top}(X,Y)$, i.e. one has a canonical isomorphism
in $\mathcal{T}$:
$$\underline{Hom}_{\mathcal{T}}(y(X),y(Y))=y(\underline{Hom}_{Top}(X,Y)).$$

If $G$ and $A$ are two Hausdorff locally compact topological groups such that $A$ is abelian
then the abelian group of continuous morphisms $\underline{Hom}_{Top}(G,A)$
is also endowed with the compact-open topology, and we have
$$\underline{Hom}_{\mathcal{T}}(y(G),y(A))=y(\underline{Hom}_{Top}(G,A)).$$
Note that $y(G)$ and $y(A)$ are two group objects in $\mathcal{T}$
since the Yoneda functor $y$ commutes with finite projective limits.

\begin{defn}\label{defn-dual}
Let $\mathcal{G}$ be a group object of $\mathcal{T}$. We denote by $\mathcal{G}^D$ the
internal Hom-group-object of $\mathcal{T}$:
$$\mathcal{G}^D:=\underline{Hom}_{\mathcal{T}}(\mathcal{G},y(\mathbb{S}^1)),$$
where $\mathbb{S}^1$ is endowed with its standard topology. If $\mathcal{A}$
is an abelian object of $\mathcal{T}$, then the abelian object $\mathcal{A}^D$
is said to be the \emph{dual} of $\mathcal{A}$.
\end{defn}
For any group object $\mathcal{G}$ of $\mathcal{T}$, there is a canonical
morphism
\begin{equation}\label{mapdansDD}
d_{\mathcal{G}}:\mathcal{G}\longrightarrow \mathcal{G}^{DD}.
\end{equation}
The discussion above shows that if $\mathcal{G}=y(G)$ is represented by a
locally compact abelian topological group $G$, then $y(G)^D$ is
represented by the usual Pontryagin dual
$G^D:=\underline{Hom}_{Top}(G,\mathbb{S}^1)$ of $G$, endowed with the compact-open topology. Therefore, the
following result is given by Pontryagin duality for Hausdorff
locally compact abelian groups.
\begin{thm}\label{Pontryagin}
Let $\mathcal{A}$ be an abelian object of $\mathcal{T}$ representable by an
abelian Hausdorff locally compact topological group. Then one has a
canonical isomorphism
$$d_{\mathcal{A}}:\mathcal{A}\simeq \mathcal{A}^{DD}.$$
\end{thm}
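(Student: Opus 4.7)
The plan is to reduce the statement to classical Pontryagin duality for locally compact Hausdorff abelian groups via the Yoneda embedding $y : Top \to \mathcal{T}$.

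First, I write $\mathcal{A} = y(A)$ for a locally compact Hausdorff abelian topological group $A$. The discussion immediately preceding the theorem already establishes the key formal identity
\[
\underline{Hom}_{\mathcal{T}}(y(G), y(B)) = y(\underline{Hom}_{Top}(G, B))
\]
whenever $G$ and $B$ are locally compact Hausdorff (with $\underline{Hom}_{Top}$ carrying the compact-open topology). Applying this with $G = A$ and $B = \mathbb{S}^1$ identifies $\mathcal{A}^D$ with $y(A^D)$, where $A^D = \underline{Hom}_{Top}(A, \mathbb{S}^1)$ is the usual Pontryagin dual. Since $A^D$ is itself a locally compact Hausdorff abelian topological group (this is classical), the identity applies again to give $\mathcal{A}^{DD} = y(A^{DD})$.

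Next I verify that, under these identifications, the canonical biduality map $d_{\mathcal{A}}$ of (\ref{mapdansDD}) coincides with $y(d_A)$, where $d_A : A \to A^{DD}$ is the classical evaluation map $a \mapsto (\chi \mapsto \chi(a))$. This is a naturality check: both $d_{\mathcal{A}}$ and $y(d_A)$ are characterized, via the defining adjunction (\ref{hom-object}) applied twice, by the same pairing $\mathcal{A} \times \mathcal{A}^D \to y(\mathbb{S}^1)$, namely the image under $y$ of the continuous evaluation map $A \times A^D \to \mathbb{S}^1$. Since $y$ commutes with finite products and is fully faithful, the two morphisms in $\mathcal{T}$ agree.

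Finally I invoke classical Pontryagin duality: $d_A : A \to A^{DD}$ is an isomorphism of topological groups. The Yoneda functor $y$ is fully faithful and commutes with arbitrary projective limits, hence preserves (and reflects) isomorphisms; therefore $y(d_A) = d_{\mathcal{A}}$ is an isomorphism in $\mathcal{T}$.

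The only genuinely non-formal ingredient is classical Pontryagin duality itself, which is imported as a black box. The main obstacle in the sheaf-theoretic translation is checking the naturality step identifying $d_{\mathcal{A}}$ with $y(d_A)$; once the internal $\underline{Hom}$ is matched with the compact-open $\underline{Hom}_{Top}$ on representables, this is a direct consequence of the Yoneda lemma applied to the bilinear pairing $A \times A^D \to \mathbb{S}^1$.
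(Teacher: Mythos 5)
Your proof is correct and follows exactly the approach the paper intends: the paper offers no separate proof environment for Theorem~\ref{Pontryagin}, instead presenting the result as an immediate consequence of the preceding identification $\underline{Hom}_{\mathcal{T}}(y(G),y(A))=y(\underline{Hom}_{Top}(G,A))$ together with classical Pontryagin duality. You have supplied the one detail the paper silently elides — the verification that the abstract biduality map $d_{\mathcal{A}}$ of~(\ref{mapdansDD}) matches $y(d_A)$ under the representability identifications — and your Yoneda/naturality argument for that step is the right one.
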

\begin{cor}\label{DD=abelian}
If $y(G)$ is a group of $\mathcal{T}$ represented by a Hausdorff
locally compact topological group $G$, then one has
$$y(G)^{DD}\simeq y(G^{ab}),$$
where $G^{ab}$ is the maximal Hausdorff abelian quotient of $G$.
\end{cor}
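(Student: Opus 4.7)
The idea is to reduce the statement to Pontryagin duality (Theorem~\ref{Pontryagin}) applied to the maximal Hausdorff abelian quotient $G^{ab}:=G/\overline{[G,G]}$, which is locally compact Hausdorff abelian as the quotient of the locally compact group $G$ by a closed normal subgroup. The bulk of the work lies in identifying the single dual $y(G)^D$ with $y\bigl((G^{ab})^D\bigr)$; the corollary will then follow by dualizing once more.

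First I would compute $y(G)^D$ via its universal property. Unwinding the definition of the internal Hom-group-object, for any locally compact space $W$ the set $Hom_{\mathcal{T}}(y(W),y(G)^D)$ consists of morphisms of group objects $y(W)\times y(G)\to y(W)\times y(\mathbb{S}^1)$ over $y(W)$ in the slice topos $\mathcal{T}/y(W)$; equivalently, of continuous maps $W\times G\to\mathbb{S}^1$ whose restriction to each fiber $\{w\}\times G$ is a group homomorphism. By the exponential law for the compact-open topology, which is valid because $G$ is locally compact, these are in natural bijection with the continuous maps from $W$ into the topological group $H$ of continuous group homomorphisms $G\to\mathbb{S}^1$ endowed with the compact-open topology. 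Hence $y(G)^D$ is represented by $H$.

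Second, since $\mathbb{S}^1$ is abelian Hausdorff, every continuous group homomorphism $G\to\mathbb{S}^1$ factors uniquely through the quotient $\pi:G\to G^{ab}$. This yields a continuous bijective group homomorphism $(G^{ab})^D\to H$, continuous because precomposition with $\pi$ sends compact sets to compact sets. To promote it to a homeomorphism I would use that $\pi$ is an open surjection of locally compact Hausdorff spaces: a standard lifting lemma then ensures that every compact $K\subseteq G^{ab}$ is of the form $\pi(K')$ for some compact $K'\subseteq G$, matching up the subbasic open sets of the two compact-open topologies in the remaining direction. Naturality in $W$ thus produces an isomorphism of sheaves $y(G)^D\simeq y\bigl((G^{ab})^D\bigr)$. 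Dualizing once more and invoking Theorem~\ref{Pontryagin} for the locally compact Hausdorff abelian group $G^{ab}$ gives
$$y(G)^{DD}\simeq y\bigl((G^{ab})^D\bigr)^D\simeq y\bigl((G^{ab})^{DD}\bigr)\simeq y(G^{ab}),$$
which is the claimed isomorphism.

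The main obstacle is the topological comparison in the second paragraph: verifying that the compact-open topologies on $H$ and on $(G^{ab})^D$ coincide. Once this is secured, the rest is formal manipulation of internal Hom combined with Pontryagin duality applied to the abelian quotient.
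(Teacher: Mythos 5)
Your proof is correct and follows essentially the same route as the paper: identify $y(G)^D$ with the sheaf represented by the ordinary compact-open dual $G^D$ (which the paper had already established in the discussion preceding Definition~\ref{defn-dual}), observe that $G^D\cong(G^{ab})^D$ as topological groups, and conclude by Theorem~\ref{Pontryagin}. The only difference is that you spell out the compact-lifting argument for the homeomorphism $G^D\cong(G^{ab})^D$, which the paper takes as known.
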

\begin{proof}Using Theorem \ref{Pontryagin}, the result follows from
$$y(G)^{D}=y(G^D)=y((G^{ab})^D)=y(G^{ab})^D$$
since $G^{ab}$ is Hausdorff and locally compact.
\end{proof}

\subsection{Fundamental groups}\label{subsect-fund}

Let $\mathcal{S}$ be a topos and let
$t:\mathcal{E}\rightarrow\mathcal{S}$ be a connected and locally
connected topos over $\mathcal{S}$ (i.e. $t^*$ is fully faithful and
has an $\mathcal{S}$-indexed left adjoint, see \cite{elephant}
C3.3). An object $L$ of $\mathcal{E}$ is said to be \emph{locally
constant over $\mathcal{S}$} if there exists a covering morphism
$U\rightarrow e_{\mathcal{E}}$ of the final object of $\mathcal{E}$,
an object $S$ of $\mathcal{S}$ and an isomorphism $L\times U\simeq
f^*S\times U$ over $U$. The object $U$ is then said to split or trivialize
$L$. Let $LC(\mathcal{E})$ be the full subcategory of $\mathcal{E}$
consisting in locally constant objects of $\mathcal{E}$ over
$\mathcal{S}$. We denote by $SLC(\mathcal{E})$ the category of (internal) sums
of locally constant objects (see \cite{Bunge-Moerdijk} Section 2 for an explicit definition). The category $SLC(\mathcal{E})$ is a
topos and one has a canonical \emph{connected} morphism
\begin{equation}\label{map-loc-csts}
\mathcal{E}\rightarrow SLC(\mathcal{E}),
\end{equation}
whose inverse image is the inclusion
$SLC(\mathcal{E})\hookrightarrow\mathcal{E}$. The fact that this
morphism is connected means that its inverse image is fully
faithful, which is obvious here. Note that this morphism is defined
over $\mathcal{S}$.

Assume that the $\mathcal{S}$-topos $\mathcal{E}$ has a point $p$,
i.e. a section $p:\mathcal{S}\rightarrow\mathcal{E}$ of the
structure map $t:\mathcal{E}\rightarrow\mathcal{S}$. Composing $p$
and the morphism (\ref{map-loc-csts}), we obtain a point
$$\widetilde{p}:\mathcal{S}\rightarrow\mathcal{E}\rightarrow
SLC(\mathcal{E})$$ of the topos $SLC(\mathcal{E})$ over
$\mathcal{S}$. The theory of the fundamental group in the context of
topos theory shows the following (see \cite{Moerdijk-Prodiscrete}
and \cite{Bunge-Moerdijk} Section 1). There exists a "pro-discrete
localic group" $\mathcal{G}$ in $\mathcal{S}$ well defined up to a
canonical isomorphism and an equivalence
$$B\mathcal{G}\simeq SLC(\mathcal{E}),$$
where $B\mathcal{G}$ is the classifying topos of $\mathcal{G}$, i.e.
the topos of $\mathcal{G}$-objects in $\mathcal{S}$. Moreover, the
equivalence above identifies the inverse image of the point
$\widetilde{p}:\mathcal{S}\rightarrow SLC(\mathcal{E})$ with the
forgetful functor $B\mathcal{G}\rightarrow\mathcal{S}$.

The topos $\mathcal{E}$ is said to be \emph{locally simply
connected} over $\mathcal{S}$ if there exists one covering morphism
$U\rightarrow e_{\mathcal{E}}$ trivializing all locally constant
objects in $\mathcal{E}$. In this case one has
$SLC(\mathcal{E})=LC(\mathcal{E})$, and the pro-discrete localic
group $\mathcal{G}$ is just a group object of $\mathcal{S}$ (see
\cite{Barr-Diaconescu} or \cite{Bunge-Moerdijk} Section 1). We denote by $\pi_1(\mathcal{E},p)$ this
group object. We get a \emph{connected} morphism
\begin{equation}\label{universal-cover}
\mathcal{E}\longrightarrow LC(\mathcal{E})\simeq
B_{\pi_1(\mathcal{E},p)}
\end{equation}
over $\mathcal{S}$, i.e. a commutative diagram :
\[ \xymatrix{
\mathcal{E}\ar[r]\ar[rd]&B_{\pi_1(\mathcal{E},p)}\ar[d]\\
&\mathcal{S} }\] The morphism (\ref{universal-cover}) into the
classifying topos $B_{\pi_1(\mathcal{E},p)}$ corresponds to a torsor
in $\mathcal{E}$ of group $\pi_1(\mathcal{E},p)$, which is called
the \emph{universal cover} of $\mathcal{E}$ over $\mathcal{S}$.
\begin{defn}
Let $\mathcal{A}$ be an abelian object of $\mathcal{E}$. We define the
\emph{cohomology of $\mathcal{E}$ with value in $\mathcal{S}$} as
follows:
$$H_{\mathcal{S}}^{n}(\mathcal{E},\mathcal{A})=R^n(t_*)\mathcal{A}.$$
\end{defn}
The fundamental group represents the first cohomology group over an arbitrary base topos. More precisely, one has the following result.
\begin{prop}\label{prop-pi1-represents}
Let $\mathcal{E}$ be a connected, locally connected and locally simply connected topos over $\mathcal{S}$ endowed with a point $p$. For any abelian object $\mathcal{A}$ of $\mathcal{S}$,
$t^*\mathcal{A}$ is a constant abelian object of $\mathcal{E}$ over
$\mathcal{S}$ and one has
$$H_{\mathcal{S}}^{1}(\mathcal{E},t^*\mathcal{A})\simeq\underline{Hom}_{\mathcal{S}}(\pi_1(\mathcal{E},p),\mathcal{A})$$
where the right hand side is the internal Hom-group-object in
$\mathcal{S}$ defined as above.
\end{prop}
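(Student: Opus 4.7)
The plan is to factor the structure map as $t=\pi\circ f$, where $f:\mathcal{E}\to B_{\pi_1(\mathcal{E},p)}$ is the connected morphism (\ref{universal-cover}) and $\pi:B_{\pi_1(\mathcal{E},p)}\to\mathcal{S}$ is the canonical structure morphism, and to compute $R^1t_*(t^*\mathcal{A})$ by reducing it to relative first cohomology of the classifying topos. That $t^*\mathcal{A}$ is constant over $\mathcal{S}$ is immediate from the definition: the trivial covering $e_{\mathcal{E}}\to e_{\mathcal{E}}$, together with $S=\mathcal{A}\in\mathcal{S}$ and the identity isomorphism, witness it.

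The first step is to identify $R^1t_*(t^*\mathcal{A})\simeq R^1\pi_*(\pi^*\mathcal{A})$ at the level of torsors. By the usual description of higher direct images, $R^1t_*(t^*\mathcal{A})$ is the $\mathcal{S}$-sheaf whose sections over $Z\in\mathcal{S}$ parametrize isomorphism classes of $(t^*\mathcal{A})|_{t^*Z}$-torsors in $\mathcal{E}/t^*Z$. Any such torsor $T$ is fpqc-locally isomorphic to $t^*(\mathcal{A}\times Z)$, and is therefore a locally constant object of $\mathcal{E}/t^*Z$ over $\mathcal{S}/Z$. Since connectedness, local connectedness and local simple connectedness are inherited by the base change $\mathcal{E}/t^*Z\to\mathcal{S}/Z$, the equivalence of \S\ref{subsect-fund} applies in $\mathcal{S}/Z$ and yields $LC(\mathcal{E}/t^*Z)\simeq B_{\pi_1(\mathcal{E},p)\times Z}$. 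Under this equivalence, the $(t^*\mathcal{A})|_{t^*Z}$-torsors in $\mathcal{E}/t^*Z$ correspond to $(\pi^*\mathcal{A})|_Z$-torsors in $B_{\pi_1(\mathcal{E},p)\times Z}$. Naturality in $Z$ then delivers the isomorphism $R^1t_*(t^*\mathcal{A})\simeq R^1\pi_*(\pi^*\mathcal{A})$.

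It remains to compute the right-hand side. The $(\pi^*\mathcal{A})|_Z$-torsors in $B_{\pi_1(\mathcal{E},p)\times Z}$, with its tautologically trivial $\pi_1\times Z$-action on $\mathcal{A}\times Z$, are classified by group homomorphisms $\pi_1(\mathcal{E},p)\times Z\to\mathcal{A}\times Z$ in $\mathcal{S}/Z$. This is the standard computation of the first cohomology of a classifying topos with trivial coefficients, obtained by recognizing $1$-cocycles as group homomorphisms when the action is trivial and observing that the only principal cocycle is the trivial one. By the defining property of $\underline{Hom}_{\mathcal{S}}$, these homomorphisms are exactly the $Z$-sections of $\underline{Hom}_{\mathcal{S}}(\pi_1(\mathcal{E},p),\mathcal{A})$, so composing with the previous step gives the required isomorphism of objects of $\mathcal{S}$.

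The technical heart of the argument, and the point requiring genuine care, is the base-change compatibility invoked in the first step: that $\mathcal{E}/t^*Z\to\mathcal{S}/Z$ is again connected, locally connected and locally simply connected, and that its fundamental group is naturally $\pi_1(\mathcal{E},p)\times Z$. These are standard but nontrivial facts from the $\mathcal{S}$-indexed theory of the fundamental group (\cite{Bunge-Moerdijk}). Once they are in place, everything reduces to the two familiar observations that torsors under a constant sheaf of groups are locally constant and that degree-one cohomology of a classifying topos with trivial coefficients coincides with the Hom group object.
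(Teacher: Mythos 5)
The paper states this proposition without giving a proof (it is presented as a known fact about topos-theoretic fundamental groups, and no argument or citation appears after the statement), so there is no in-paper proof to compare against; I will assess your argument on its own merits.

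Your sketch is correct and follows the natural route: a torsor under the constant group $t^*\mathcal{A}$ is trivialized by a covering of the final object, hence is locally constant over $\mathcal{S}$, hence lives in $LC(\mathcal{E})\simeq B_{\pi_1(\mathcal{E},p)}$; after base-changing to $\mathcal{S}/Z$ this identifies the $Z$-sections of $R^1t_*(t^*\mathcal{A})$ with isomorphism classes of $\pi^*\mathcal{A}$-torsors in the classifying topos, which the usual Čech/group-cohomology computation against the canonical point $\mathcal{S}\to B_{\pi_1(\mathcal{E},p)}$ identifies with $\underline{Hom}_{\mathcal{S}}(\pi_1(\mathcal{E},p),\mathcal{A})(Z)$ once the action is trivial. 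Two small points deserve correction or emphasis. First, the phrase \emph{fpqc-locally isomorphic} is out of place here — there is no scheme-theoretic topology in sight; what you mean (and what the definition of torsor gives) is simply that $T$ is split by a covering morphism $U\to e$, which is precisely the definition of \emph{locally constant over $\mathcal{S}$} from \S\ref{subsect-fund}. Second, the step $R^1t_*(t^*\mathcal{A})(Z)\simeq\{\text{torsor classes over }Z\}$ is a priori only an isomorphism after sheafification; it becomes a literal identification in your argument because the natural map lands in the sheaf $\underline{Hom}_{\mathcal{S}}(\pi_1(\mathcal{E},p),\mathcal{A})$ and is an isomorphism objectwise, but this should be said explicitly. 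As you note, the genuinely nontrivial ingredient is the $\mathcal{S}$-indexed compatibility: that $LC(\mathcal{E}/t^*Z)$ is the base change of $LC(\mathcal{E})$ along $\mathcal{S}/Z\to\mathcal{S}$, with fundamental group $\pi_1(\mathcal{E},p)\times Z$. This does follow from the indexed formulation of Galois theory in \cite{Bunge-Moerdijk} (and for the case at hand one could also argue more directly, since $\mathcal{S}/Z\to\mathcal{S}$ is a localization and local connectedness, connectedness, and the splitting cover all pass to slices), so citing it is acceptable, but a careful write-up should spell out exactly which form of the statement is being invoked.
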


\subsubsection{Examples}

Let $X$ be a Hausdorff topological space. We denote by
$Sh(X)$ the topos of sheaves of sets on $X$. There exists a
unique map
$$t:Sh(X)\rightarrow\underline{Set}.$$
The topological space $X$ is connected if and only if $t$ is
connected (i.e. if $t^*$ is fully faithful). Let $F$ be a sheaf on
$X$ (i.e. an étalé space $\widetilde{F}\rightarrow X$). If $X$ is
locally connected then $\widetilde{F}$ is locally connected and
$\widetilde{F}$ is the coproduct in $Sh(X)$ of its
connected components. The functor $F\rightarrow\pi_0(\widetilde{F})$
is left adjoint to $t^*$ hence $t$ is a locally connected map of
topoi. Conversely, if $t$ is a locally connected map then $X$ is
locally connected as a topological space. A sheaf $F$ on $X$ is
locally constant if and only if $\widetilde{F}\rightarrow X$ is an
étale cover. Assume that $X$ is locally simply connected and let
$\{U_i\subseteq X,\,i\in I\}$ be an open covering such that $U_i$ is
simply connected. Then any locally constant sheaf on $X$ is
trivialized by $U:=\coprod U_i\rightarrow X$. A point $x\in X$
yields a morphism $p_{x}:\underline{Set}\rightarrow Sh(X)$
(and conversely). The inverse image of this morphism is the stalk
functor $F\rightarrow F_x$. The category $LC(Sh(X))$ is
precisely the category of étale covers of $X$ and the group
$\pi_1(Sh(X),p_x)$ is the usual fundamental group
$\pi_1(X,x)$. In this special case, the equivalence of categories
$$\appl{LC(Sh(X))}{B\pi_1(X,x)}{F}{F_x}$$
is the usual Galois theory for topological spaces. Here
$B\pi_1(X,x)$ is the classifying topos of the discrete group
$\pi_1(X,x)$, i.e. the category of $\pi_1(X,x)$-sets.

Let $\mathcal{S}$ be a topos and let $G$ be a group of
$\mathcal{S}$. We denote by $B_G$ the topos of $G$-objects in
$\mathcal{S}$. The canonical morphism
$$t:B_G\longrightarrow\mathcal{S}$$
is connected, locally connected and locally simply connected.
Indeed, $t$ is connected since $t^*$ is obviously fully faithful.
Moreover, $t$ is locally connected since $t^*$ has a
$\mathcal{S}$-indexed left adjoint given by the quotient functor
$$t_!\mathcal{F}=\mathcal{F}/G:=\underrightarrow{lim}\,( G\times\mathcal{F}\rightrightarrows\mathcal{F}).$$
Note that the inductive limit or coequalizer $\underrightarrow{lim}\,(
G\times\mathcal{F}\rightrightarrows\mathcal{F})$, where the maps are
given by multiplication and projection, always exists in the topos
$\mathcal{S}$. Finally $E_G\rightarrow\{*\}$ trivializes any object,
hence $t$ is locally simply connected. There is a canonical point
$p:\mathcal{S}\rightarrow B_G$, whose inverse image is the forgetful
functor. In this case, the inclusion $LC(B_G)\hookrightarrow B_G$ is
an equivalence (in fact an isomorphism) and the fundamental group
$\pi_1(B_G,p)$ is $G$.

\subsubsection{} Let $t:\mathcal{E}\rightarrow\mathcal{T}$ be a connected and locally connected topos over $\mathcal{T}$ endowed with a $\mathcal{T}$-valued point $p$. The fundamental group $\pi_1(\mathcal{E},p)$ will be called the \emph{topological fundamental group} of $\mathcal{E}$.

\begin{cor}\label{cor-retrouve-pi1-ab}
Let $t:\mathcal{E}\rightarrow\mathcal{T}$ be a connected, locally connected and locally simply connected topos over $\mathcal{T}$ endowed with a $\mathcal{T}$-valued point $p$.
Let $y\mathbb{S}^1$ be the sheaf of $\mathcal{T}$ represented by the standard topological group $\mathbb{S}^1$, and define $\widetilde{\mathbb{S}}^1:=t^*y\mathbb{S}^1$. One has
$$H_{\mathcal{T}}^{1}(\mathcal{E},\widetilde{\mathbb{S}}^1)\simeq\pi_1(\mathcal{E},p)^D.$$
If $\pi_1(\mathcal{E},p)$ is represented by a locally compact group, then $H_{\mathcal{T}}^{1}(\mathcal{E},\widetilde{\mathbb{S}^1})$ is represented by the usual Pontryagin dual $\pi_1(\mathcal{E},p)^D$, and one has
$$H_{\mathcal{T}}^{1}(\mathcal{E},\widetilde{\mathbb{S}}^1)^D\simeq\pi_1(\mathcal{E},p)^{DD}=\pi_1(\mathcal{E},p)^{ab}$$
where $\pi_1(\mathcal{E},p)^{ab}$ is the maximal Hausdorff quotient of $\pi_1(\mathcal{E},p)$.
\end{cor}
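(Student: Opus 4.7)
The plan is to deduce the corollary as a direct synthesis of Proposition \ref{prop-pi1-represents}, the preceding Pontryagin-duality package for representable objects of $\mathcal{T}$, and Corollary \ref{DD=abelian}; there is essentially no new computation to perform.

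First I would apply Proposition \ref{prop-pi1-represents} to the base topos $\mathcal{S}=\mathcal{T}$ and to the abelian object $\mathcal{A}=y\mathbb{S}^1$. Since by construction $\widetilde{\mathbb{S}}^1=t^*y\mathbb{S}^1$, and since $\mathcal{E}$ satisfies the standing hypotheses (connected, locally connected, locally simply connected over $\mathcal{T}$, with a $\mathcal{T}$-point $p$), the proposition gives a canonical isomorphism
$$H_{\mathcal{T}}^{1}(\mathcal{E},\widetilde{\mathbb{S}}^1)\;\simeq\;\underline{Hom}_{\mathcal{T}}(\pi_1(\mathcal{E},p),\,y\mathbb{S}^1).$$
The right-hand side is the internal Hom-group-object of Definition \ref{defn-dual}, i.e. $\pi_1(\mathcal{E},p)^{D}$. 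This establishes the first isomorphism of the corollary without further work.

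Next, under the extra assumption that $\pi_1(\mathcal{E},p)=y(G)$ for a Hausdorff locally compact topological group $G$, I would invoke the identification recalled just before Definition \ref{defn-dual}: the internal Hom $\underline{Hom}_{\mathcal{T}}(y(G),y(\mathbb{S}^1))$ is represented by the ordinary continuous Hom-group $\underline{Hom}_{Top}(G,\mathbb{S}^1)$ endowed with the compact-open topology, which is itself a Hausdorff locally compact abelian group, namely the usual Pontryagin dual $G^{D}$. Combined with the first step, this shows that $H^{1}_{\mathcal{T}}(\mathcal{E},\widetilde{\mathbb{S}}^{1})$ is represented by $G^{D}$, proving the asserted representability.

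Finally, to obtain the last formula I would apply the functor $(-)^{D}$ once more and quote Corollary \ref{DD=abelian}:
$$H_{\mathcal{T}}^{1}(\mathcal{E},\widetilde{\mathbb{S}}^1)^{D}\;\simeq\;\pi_1(\mathcal{E},p)^{DD}\;=\;y(G)^{DD}\;\simeq\;y(G^{ab})\;=\;\pi_1(\mathcal{E},p)^{ab},$$
where the middle isomorphism is exactly Corollary \ref{DD=abelian} and the final equality is the definition of $\pi_1(\mathcal{E},p)^{ab}$ as the maximal Hausdorff abelian quotient of the representing group. Since every individual step has already been proved in the preceding subsections, there is no genuine obstacle here; if anything, the only subtlety to double-check is that the hypothesis of Proposition \ref{prop-pi1-represents} is genuinely applied to the \emph{constant} sheaf $t^{*}y\mathbb{S}^{1}$ and not to some other avatar of $\mathbb{S}^{1}$ — which is exactly how $\widetilde{\mathbb{S}}^{1}$ was defined.
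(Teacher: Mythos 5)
Your argument is correct and matches the paper's proof, which cites exactly the same three ingredients (Proposition \ref{prop-pi1-represents}, Definition \ref{defn-dual}, and Corollary \ref{DD=abelian}) in the same order; you have simply spelled out the details, including the intermediate identification $\underline{Hom}_{\mathcal{T}}(y(G),y(\mathbb{S}^1))=y(\underline{Hom}_{Top}(G,\mathbb{S}^1))$ which the paper leaves implicit.
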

\begin{proof}
This follows from Proposition \ref{prop-pi1-represents}, Definition \ref{defn-dual} and Corollary \ref{DD=abelian}.
\end{proof}

\section{Application to the arithmetic fundamental group}

Let $\bar{X}=(Spec\,\mathcal{O}_F,X_{\infty})$ be the Arakelov
compactification of the ring of integers in a number field $F$.
Following the computations of S. Lichtenbaum (see
\cite{Lichtenbaum}), we are looking for a topos $\bar{X}_{L}$ defined
over $\mathcal{T}$ whose cohomology is related the Dedekind zeta
function $\zeta_F(s)$. This conjectural topos $\bar{X}_{L}$ will be called the \emph{conjectural Lichtenbaum topos}.
The topos $\bar{X}_{L}$ should be defined over
$\mathcal{T}$, since the coefficients for this conjectural
cohomology theory should contain the category of locally compact abelian topological
groups. If we denote by $$t:\bar{X}_{L}\longrightarrow\mathcal{T}$$ the structure
map, then we define the sheaf of continuous real valued functions
$\widetilde{\mathbb{R}}$ to be $t^*(y\mathbb{R})$, where
$y\mathbb{R}$ is the abelian object of $\mathcal{T}$ represented by
the standard topological group $\mathbb{R}$.

Following the computations of S. Lichtenbaum, the cohomology of
$\bar{X}_{L}$ must satisfy the following :
\begin{equation}
H^i(\bar{X}_{L},\mathbb{Z})=\mathbb{Z},\,0,\,Pic^1(\bar{X})^D\mbox{
for $i=0,1,2$ respectively, and}
\end{equation}
\begin{equation}
H^i(\bar{X}_{L},\widetilde{\mathbb{R}})=\mathbb{R},\,\mathbb{R},\,0\mbox{
for $i=0,1,2$ respectively.}
\end{equation}

Recall that for any (Grothendieck) topos $\mathcal{E}$, there is a
unique morphism $e:\mathcal{E}\rightarrow\underline{Set}$. The
cohomology of the topos $\mathcal{E}$ with coefficients in
$\mathcal{A}$ is defined by
$$H^n(\mathcal{E},\mathcal{A}):=R^n(e_*)\mathcal{A}.$$
Since the base topos of the topos $\bar{X}_{L}$ is $\mathcal{T}$
instead of $\underline{Set}$, it is natural to consider the cohomology
of $\bar{X}_W$ with value in $\mathcal{T}$. More precisely, the
category $\mathcal{T}$ is thought of as a universe of sets, and we
define
$$H_{\mathcal{T}}^n(\bar{X}_{L},\mathcal{A}):=R^n(t_*)\mathcal{A},$$
for any abelian object $\mathcal{A}$ of $\bar{X}_{L}$. The unique
morphism $\mathcal{T}\rightarrow\underline{Sets}$ is strongly acyclic
(i.e. its direct image is exact) and this point of view is
inoffensive. We should have
\begin{equation}\label{T-coh-Z-coef}
H_{\mathcal{T}}^i(\bar{X}_{L},\mathbb{Z})=\mathbb{Z},\,0,\,Pic^1(\bar{X})^D\mbox{
for $i=0,1,2$ respectively}
\end{equation}
where $\mathbb{Z}$ and $Pic^1(\bar{X})^D$ are the sheaves of
$\mathcal{T}$ represented by the discrete abelian groups
$\mathbb{Z}$ and $Pic^1(\bar{X})^D$. Respectively, the
$\mathcal{T}$-cohomology of $\bar{X}_{L}$ with value in
$\widetilde{\mathbb{R}}$ should be given by
\begin{equation}\label{T-coh-R-coef}
H_{\mathcal{T}}^i(\bar{X}_{L},\widetilde{\mathbb{R}})=y(\mathbb{R}),\,y(\mathbb{R}),\,0\mbox{
for $i=0,1,2$}
\end{equation}
where $y(\mathbb{R})$ is the abelian object of $\mathcal{T}$
represented by the standard topological group $\mathbb{R}$.

\begin{Hypothese}\label{Hyp1}
The topos $\bar{X}_{L}$ is connected, locally connected, locally
simply connected over $\mathcal{T}$, and endowed with a point
$p:\mathcal{T}\rightarrow \bar{X}_{L}$.
\end{Hypothese}
It is natural to expect that $\bar{X}_{L}$ is connected and locally
connected over $\mathcal{T}$. A point $p:\mathcal{T}\rightarrow
\bar{X}_{L}$ should be given by any valuation of the number field $F$.
However, it is not clear that $\bar{X}_{L}$ should be locally simply
connected over $\mathcal{T}$ (for example $\bar{X}_{et}$ is not
locally simply connected over $\underline{Sets}$ in general). But
this assumption can be avoided using the more advanced notion of
localic groups (or pro-groups). We make this assumption to simplify
the following computations.

\begin{Hypothese}\label{Hyp2}
The cohomology of $\bar{X}_{L}$ with value in $\mathcal{T}$ satisfies
\emph{(\ref{T-coh-Z-coef})} and \emph{(\ref{T-coh-R-coef})}.
\end{Hypothese}

\subsection{The abelian arithmetic fundamental group}

\begin{thm}\label{main-thm}
Let $\bar{X}_{L}$ be a topos over $\mathcal{T}$ satisfying Hypothesis
\ref{Hyp1} and \ref{Hyp2}. Then one has an isomorphism of
topological groups:
$$\pi_1(\bar{X}_{L},p)^{DD}\simeq Pic(\bar{X})$$
where $Pic(\bar{X})$ denotes the Arakelov Picard group of the number
field $F$. In particular, if $\pi_1(\bar{X}_{L},p)$ is represented by
a locally compact topological group, then one has an isomorphism of
topological groups:
$$\pi_1(\bar{X}_{L},p)^{ab}\simeq Pic(\bar{X}).$$
\end{thm}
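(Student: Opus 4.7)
The strategy is to compute $\pi_1(\bar{X}_L,p)^D$ via the exponential sequence of abelian objects of $\bar{X}_L$
$$0\longrightarrow\mathbb{Z}\longrightarrow\widetilde{\mathbb{R}}\longrightarrow\widetilde{\mathbb{S}}^1\longrightarrow 0,$$
which is exact because $\exp\colon\mathbb{R}\to\mathbb{S}^1$ admits local continuous sections, and then to pass to the Pontryagin dual. This turns the cohomological data in Hypothesis \ref{Hyp2} into a statement about $\pi_1$.

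First I would take the associated long exact sequence in $\mathcal{T}$-valued cohomology and feed in Hypothesis \ref{Hyp2}. The vanishing of $H^1_{\mathcal{T}}(\bar{X}_L,\mathbb{Z})$ and $H^2_{\mathcal{T}}(\bar{X}_L,\widetilde{\mathbb{R}})$, together with the values $H^1_{\mathcal{T}}(\bar{X}_L,\widetilde{\mathbb{R}})=y(\mathbb{R})$ and $H^2_{\mathcal{T}}(\bar{X}_L,\mathbb{Z})=Pic^1(\bar{X})^D$, collapses the relevant piece to a short exact sequence of abelian objects of $\mathcal{T}$
$$0\longrightarrow y(\mathbb{R})\longrightarrow H^1_{\mathcal{T}}(\bar{X}_L,\widetilde{\mathbb{S}}^1)\longrightarrow Pic^1(\bar{X})^D\longrightarrow 0.$$
Corollary \ref{cor-retrouve-pi1-ab} identifies the middle term with $\pi_1(\bar{X}_L,p)^D$, producing an extension
$$0\longrightarrow y(\mathbb{R})\longrightarrow \pi_1(\bar{X}_L,p)^D\longrightarrow Pic^1(\bar{X})^D\longrightarrow 0 \qquad (\ast)$$
in $\mathcal{T}$.

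Next I would verify that $\pi_1(\bar{X}_L,p)^D$ is representable by a locally compact Hausdorff abelian topological group, which is necessary before applying classical Pontryagin duality. The key observation is that $Pic^1(\bar{X})^D$ is \emph{discrete}, being the Pontryagin dual of the compact Arakelov class group $Pic^1(\bar{X})$. Since the quotient in $(\ast)$ is represented by a discrete space, the extension splits as a sheaf of sets, so $\pi_1(\bar{X}_L,p)^D$ is represented by the topological space $\mathbb{R}\times Pic^1(\bar{X})^D$ endowed with the group law twisted by a continuous $2$-cocycle. I can then apply Pontryagin duality (Theorem \ref{Pontryagin}) to $(\ast)$: since $y(\mathbb{R})^D\simeq y(\mathbb{R})$ and $Pic^1(\bar{X})^{DD}\simeq Pic^1(\bar{X})$ (by compactness), dualizing yields
$$0\longrightarrow Pic^1(\bar{X})\longrightarrow \pi_1(\bar{X}_L,p)^{DD}\longrightarrow \mathbb{R}\longrightarrow 0. \qquad (\ast\ast)$$

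To conclude I would compare $(\ast\ast)$ with the Arakelov exact sequence $0\to Pic^1(\bar{X})\to Pic(\bar{X})\to\mathbb{R}_{>0}\to 0$. After identifying $\mathbb{R}_{>0}\simeq\mathbb{R}$ via the logarithm, both are extensions of $\mathbb{R}$ by the compact group $Pic^1(\bar{X})$, and any such extension splits as topological groups: the principal $Pic^1(\bar{X})$-bundle over the contractible group $\mathbb{R}$ is trivial, and the resulting continuous $2$-cocycle in $H^2_{cont}(\mathbb{R},Pic^1(\bar{X}))$ is a coboundary. Hence both extensions are isomorphic to $Pic^1(\bar{X})\times\mathbb{R}$, yielding $\pi_1(\bar{X}_L,p)^{DD}\simeq Pic(\bar{X})$ as topological groups. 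The second assertion then follows immediately from Corollary \ref{DD=abelian}. The step I expect to be most delicate is the representability invoked before Pontryagin duality: one must argue carefully that the extension $(\ast)$, a priori only an abelian sheaf on $(Top,\mathcal{J}_{op})$, genuinely comes from a locally compact topological group, since the classical duality theorem only applies in that category.
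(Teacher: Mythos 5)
Your strategy is essentially the same as the paper's: exponential sequence, long exact sequence in $\mathcal{T}$-cohomology, identification of $H^1_{\mathcal{T}}(\bar{X}_L,\widetilde{\mathbb{S}}^1)$ with $\pi_1(\bar{X}_L,p)^D$, a representability argument hinging on the discreteness of $Pic^1(\bar{X})^D$, Pontryagin dualization, and then comparison with the Arakelov exact sequence. The divergence is in the final comparison. The paper proves a lemma computing $H^n(B_{\mathbb{R}},Pic^1(\bar{X}))=0$ for $n\geq 2$, by peeling apart $Pic^1(\bar{X})$ into $Cl(F)$ and a real torus piece and citing Flach's computations of $H^*(B_{\mathbb{R}},-)$ for discrete and vector coefficients, and then invokes Giraud to conclude the two extensions in $\mathcal{T}$ are isomorphic. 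You instead appeal to the general statement that any topological extension of $\mathbb{R}$ by a compact abelian group splits.

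That general statement is true, but the justification you give does not actually establish it. Triviality of the principal bundle over $\mathbb{R}$ only shows the extension admits a continuous set-theoretic section, i.e.\ is classified by a class in $H^2_{cont}(\mathbb{R},Pic^1(\bar{X}))$; the assertion that this class is zero is exactly the thing that needs an argument, and contractibility of $\mathbb{R}$ alone does not provide it (continuous group cohomology of a contractible group is not automatically trivial). The quickest correct route in your framework is to dualize: the Pontryagin dual of $(\ast\ast)$ is $0\to\mathbb{R}\to(\pi_1^{DD})^D\to Pic^1(\bar{X})^D\to 0$; since $\mathbb{R}$ is divisible it is injective in $\mathbf{Ab}$, so this splits abstractly, and any abstract section defined on the discrete group $Pic^1(\bar{X})^D$ is automatically continuous, hence the dual extension splits as topological groups, hence so does $(\ast\ast)$. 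With this patch your proof is complete and is somewhat more elementary than the paper's, trading the explicit unwinding of $Pic^1(\bar{X})$ and the $H^*(B_{\mathbb{R}},-)$ computations for classical Pontryagin duality together with injectivity of $\mathbb{R}$.
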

\begin{proof}
By Hypothesis \ref{Hyp1} and section \ref{subsect-fund}, the
fundamental group $\pi_1(\bar{X}_{L},p)$ is well defined as a group
object of $\mathcal{T}$. The basic idea is to use Corollary \ref{cor-retrouve-pi1-ab} to recover the abelian fundamental group. We have
$$H_{\mathcal{T}}^1(\bar{X}_{L},\mathcal{A})=\underline{Hom}_{\mathcal{T}}(\pi_1(\bar{X}_{L},p),\mathcal{A}),$$
for any abelian object $\mathcal{A}$ of $\mathcal{T}$. The exact
sequence of topological groups
$$0\rightarrow\mathbb{Z}\rightarrow\mathbb{R}\rightarrow\mathbb{S}^1\rightarrow0$$
induces an exact sequence
$$0\rightarrow\mathbb{Z}\rightarrow \widetilde{\mathbb{R}}\rightarrow \widetilde{\mathbb{S}}^1\rightarrow0$$
of abelian sheaves in $\bar{X}_{L}$, where $\widetilde{\mathbb{S}}^1$
denotes $t^*(y(\mathbb{S}^1))$. Consider the induced long exact
sequence of $\mathcal{T}$-cohomology
$$0=H_{\mathcal{T}}^1(\bar{X}_{L},\mathbb{Z})\rightarrow H_{\mathcal{T}}^1(\bar{X}_{L},\widetilde{\mathbb{R}})\rightarrow
H_{\mathcal{T}}^1(\bar{X}_{L},\widetilde{\mathbb{S}}^1)\rightarrow
H_{\mathcal{T}}^2(\bar{X}_{L},\mathbb{Z})\rightarrow
H_{\mathcal{T}}^2(\bar{X}_{L},\widetilde{\mathbb{R}})=0$$ We obtain an
exact sequence in $\mathcal{T}$:
$$0\rightarrow\mathbb{R}\rightarrow
H_{\mathcal{T}}^1(\bar{X}_{L},\widetilde{\mathbb{S}}^1)\rightarrow
Pic^1(\bar{X})^D\rightarrow 0$$ It follows that
$$H_{\mathcal{T}}^1(\bar{X}_{L},\widetilde{\mathbb{S}}^1)=\underline{Hom}_{\mathcal{T}}(\pi_1(\bar{X}_{L},p),y(\mathbb{S}^1))=\pi_1(\bar{X}_{L},p)^D$$
is representable by an abelian Hausdorff locally compact topological
group. Indeed,
$H_{\mathcal{T}}^1(\bar{X}_{L},\widetilde{\mathbb{S}}^1)$ is
representable locally on $Pic^1(\bar{X})^D$. But $Pic^1(\bar{X})^D$
is discrete (recall that $Pic^1(\bar{X})$ is compact) and the Yoneda
embedding $y:Top\rightarrow\mathcal{T}$ commutes with coproducts
(see \cite{MatFlach} Cor 1), hence the sheaf
$H_{\mathcal{T}}^1(\bar{X}_{L},\widetilde{\mathbb{S}}^1)$ is
representable by a topological space $T$. The functor
$y:Top\rightarrow\mathcal{T}$ is fully faithful and commutes with
finite projective limits. Hence the space $T$ is endowed with a
structure of an abelian topological group since
$y(T)=H_{\mathcal{T}}^1(\bar{X}_{L},\widetilde{\mathbb{S}}^1)$ is an
abelian object of $\mathcal{T}$. 
The connected component of the identity in $T$ is isomorphic to
$\mathbb{R}$, since $Pic^1(\bar{X})^D$ is discrete. Hence $T$ is
Hausdorff and locally compact. Therefore
$\pi_1(\bar{X}_{L},p)^{DD}=y(T^D)$ is representable by an abelian
Hausdorff locally compact topological group as well.

By Pontryagin duality, we obtain the exact sequence in $\mathcal{T}$
\begin{equation}\label{exact-sequ-fund-group}
0\rightarrow
Pic^1(\bar{X})\rightarrow\pi_1(\bar{X}_{L},p)^{DD}\rightarrow\mathbb{R}\rightarrow0.
\end{equation}
\begin{lem}
One has $H^n(B_{\mathbb{R}},Pic^1(\bar{X}))=0$ for any $n\geq2$.
\end{lem}
\begin{proof}
Let $r_1$ and $r_2$ be the sets of real and complex places of the number field $F$ respectively.
One has the exact sequence of topological groups (with trivial
$\mathbb{R}$-action)
$$0\rightarrow \mathbb{R}^{r_1+r_2-1}/log(\mathcal{O}^{\times}_F/\mu_F)\rightarrow Pic^1(\bar{X})\rightarrow Cl(F)\rightarrow0$$
where $log(\mathcal{O}^{\times}_F/\mu_F)$ denotes the image of the logarithmic
embedding of the units modulo torsion $\mathcal{O}^{\times}_F/\mu_F$ in the the kernel
$\mathbb{R}^{r_1+r_2-1}$ of the sum map
$\Sigma:\mathbb{R}^{r_1+r_2}\rightarrow\mathbb{R}$. The class group
$Cl(F)$ is finite hence we have $H^n(B_{\mathbb{R}},Cl(F))=0$ for
any $n\geq1$ (see \cite{MatFlach} Prop 9.6). Hence we have
$$H^n(B_{\mathbb{R}},\mathbb{R}^{r_1+r_2-1}/log(\mathcal{O}^{\times}_F/\mu_F))\simeq H^n(B_{\mathbb{R}},Pic^1(\bar{X}))$$
for any $n\geq1$. Now consider the exact sequence
$$0\rightarrow\mathcal{O}^{\times}_F/\mu_F\rightarrow\mathbb{R}^{r_1+r_2-1}
\rightarrow\mathbb{R}^{r_1+r_2-1}/log(\mathcal{O}^{\times}_F/\mu_F)\rightarrow0.$$
We have $H^n(B_{\mathbb{R}},\mathcal{O}^{\times}_F/\mu_F)=0$ for any $n\geq1$, since
$\mathcal{O}^{\times}_F/\mu_F$ is discrete (\cite{MatFlach} Prop 9.6). We obtain
$$H^n(B_{\mathbb{R}},Pic^1(\bar{X}))=H^n(B_{\mathbb{R}},\mathbb{R}^{r_1+r_2-1}/log(\mathcal{O}^{\times}_F/\mu_F))
=H^n(B_{\mathbb{R}},\mathbb{R}^{r_1+r_2-1})=0$$
for any $n\geq2$ (again see \cite{MatFlach} Prop 9.6).
\end{proof}

In particular $H^2(B_{\mathbb{R}},Pic^1(\bar{X}))=0$ hence (see
\cite{Giraud} VIII Proposition 8.2) the extension
(\ref{exact-sequ-fund-group}) of abelian groups in $\mathcal{T}$ is
isomorphic to the exact sequence
$$0\rightarrow
Pic^1(\bar{X})\rightarrow
Pic(\bar{X})\rightarrow\mathbb{R}\rightarrow0,
$$
where $Pic(\bar{X})\rightarrow\mathbb{R}$ is the canonical
continuous morphism. In particular there is an isomorphism
$\pi_1(\bar{X}_{L},p)^{DD}\simeq Pic(\bar{X})$ in $\mathcal{T}$. This
shows that $\pi_1(\bar{X}_{L},p)^{DD}$ and $Pic(\bar{X})$ are
isomorphic as topological groups, since $y:
Top\rightarrow\mathcal{T}$ is fully faithful. The last claim of the
theorem then follows from Corollary \ref{DD=abelian}.
\end{proof}

\subsection{The morphism flow and the fundamental class}

\begin{cor}Let $\bar{X}_{L}$ be a topos over $\mathcal{T}$ satisfying Hypothesis
\ref{Hyp1} and \ref{Hyp2}. Then there is a canonical morphism over
$\mathcal{T}$ :
\begin{equation}\label{maptoBpic}
\pi:\bar{X}_{L}\longrightarrow B_{Pic(\bar{X})}.
\end{equation}
In particular, there is a canonical morphism
\begin{equation}\label{maptoBR}\mathfrak{f}:\bar{X}_{L}\longrightarrow B_{\mathbb{R}}.
\end{equation}
\end{cor}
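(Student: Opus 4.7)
The plan is to read off the morphism $\pi$ as a composition of three pieces: the universal cover morphism of $\bar{X}_L$, the canonical map from the fundamental group to its double dual, and the identification supplied by Theorem~\ref{main-thm}.

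First, since $\bar{X}_L$ is connected, locally connected and locally simply connected over $\mathcal{T}$ with point $p$ by Hypothesis~\ref{Hyp1}, the general theory of section~\ref{subsect-fund} produces the universal cover morphism
$$u:\bar{X}_L\longrightarrow LC(\bar{X}_L)\simeq B_{\pi_1(\bar{X}_L,p)}$$
over $\mathcal{T}$, with $\pi_1(\bar{X}_L,p)$ a group object of $\mathcal{T}$. Second, the canonical biduality map (\ref{mapdansDD}) gives a morphism of group objects in $\mathcal{T}$
$$d:\pi_1(\bar{X}_L,p)\longrightarrow \pi_1(\bar{X}_L,p)^{DD},$$
and Theorem~\ref{main-thm} identifies the target with $Pic(\bar{X})$ as group objects of $\mathcal{T}$ (equivalently, as topological groups, via the Yoneda embedding). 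By functoriality of the classifying topos construction $\mathcal{G}\mapsto B_{\mathcal{G}}$, the composite homomorphism $\pi_1(\bar{X}_L,p)\to Pic(\bar{X})$ yields a morphism of $\mathcal{T}$-topoi
$$B_{\pi_1(\bar{X}_L,p)}\longrightarrow B_{Pic(\bar{X})}.$$
Composing with $u$ gives the desired morphism $\pi:\bar{X}_L\to B_{Pic(\bar{X})}$ over $\mathcal{T}$.

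For the second assertion, recall from the construction of the Arakelov Picard group that the absolute value on idèles factors through a canonical continuous surjection $Pic(\bar{X})\to\mathbb{R}_{>0}\simeq\mathbb{R}$ of locally compact topological groups (this is the quotient map appearing in the exact sequence $0\to Pic^1(\bar{X})\to Pic(\bar{X})\to\mathbb{R}\to 0$ used in the proof of Theorem~\ref{main-thm}). Applying the Yoneda functor and the functoriality of $B_{(-)}$ again produces a morphism $B_{Pic(\bar{X})}\to B_{\mathbb{R}}$ over $\mathcal{T}$, and we define $\mathfrak{f}$ as its composition with $\pi$.

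There is essentially no obstacle here beyond bookkeeping: all non‑formal content (existence of $\pi_1$ as a group object, the biduality isomorphism with $Pic(\bar{X})$, and the absolute value map) has already been established. The only mild point worth mentioning is that the biduality map $d$ need not be an isomorphism when $\pi_1(\bar{X}_L,p)$ is not locally compact, but this is irrelevant for constructing $\pi$ since we only need $d$ as a morphism of group objects, not as an isomorphism.
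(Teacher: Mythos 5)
Your construction is the same as the paper's: compose the universal cover morphism $\bar{X}_L\to B_{\pi_1(\bar{X}_L,p)}$ with the classifying-topos morphism induced by $\pi_1(\bar{X}_L,p)\to\pi_1(\bar{X}_L,p)^{DD}\simeq Pic(\bar{X})$ from Theorem~\ref{main-thm}, and then with $B_{Pic(\bar{X})}\to B_{\mathbb{R}}$ coming from the absolute-value map. The only point the paper adds that you omit is the observation that $\pi$ does not depend on the choice of $p$ (because the target $\pi_1(\bar{X}_L,p)^{DD}$ is abelian), which is what justifies calling the morphism canonical.
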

\begin{proof}
There is a morphism $\bar{X}_{L}\rightarrow B_{\pi_1(\bar{X}_{L},p)}$
over $\mathcal{T}$ (the universal cover defined by the point $p$).
Composing with the morphism of classifying topos induced by the map
(see (\ref{mapdansDD}))
$$\pi_1(\bar{X}_{L},p)\longrightarrow\pi_1(\bar{X}_{L},p)^{DD}\simeq Pic(\bar{X}),$$
we get the morphism $\pi$. Note that (\ref{maptoBpic}) does not
depend on $p$ since $\pi_1(\bar{X}_{L},p)^{DD}$ is abelian. The
morphism $\mathfrak{f}$ is then given by the canonical morphism of topological
groups
$$Pic(\bar{X})\longrightarrow\mathbb{R},$$
or (more directly) by the Pontryagin dual of the map
$$\mathbb{R}=H_{\mathcal{T}}^1(\bar{X}_{L},\widetilde{\mathbb{R}})\longrightarrow
H_{\mathcal{T}}^1(\bar{X}_{L},\widetilde{\mathbb{S}}^1)=\pi_1(\bar{X}_{L},p)^{D}.$$
\end{proof}

\begin{cor}
Let $\bar{X}_{L}$ be a topos over $\mathcal{T}$ satisfying Hypothesis
\ref{Hyp1} and \ref{Hyp2}. Then there is a \emph{fundamental class}
$\theta\in H^1(X_{L},\widetilde{\mathbb{R}})$. If the fundamental group $\pi_1(\bar{X}_{L},p)$ is representable by a locally compact group, then
$$\theta\in H^1(X_{L},\widetilde{\mathbb{R}})=Hom_{cont}(Pic(\bar{X}),\mathbb{R})$$
is the canonical continuous morphism $\theta:Pic(\bar{X})\rightarrow\mathbb{R}$.
\end{cor}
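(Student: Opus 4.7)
The plan is to define $\theta$ directly from Hypothesis \ref{Hyp2} and then, in the representable case, identify it with the canonical map $Pic(\bar{X})\to\mathbb{R}$ via Proposition \ref{prop-pi1-represents} and Theorem \ref{main-thm}.

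First I would unwind $H^1(\bar{X}_{L},\widetilde{\mathbb{R}})$. Since the morphism $\mathcal{T}\to\underline{Sets}$ is strongly acyclic, the Leray spectral sequence for $\bar{X}_{L}\to\mathcal{T}\to\underline{Sets}$ collapses in low degree and gives
$$H^1(\bar{X}_{L},\widetilde{\mathbb{R}})\simeq\Gamma(\mathcal{T},H_{\mathcal{T}}^{1}(\bar{X}_{L},\widetilde{\mathbb{R}})).$$
By Hypothesis \ref{Hyp2} the right hand side is $\Gamma(\mathcal{T},y(\mathbb{R}))=Hom_{Top}(\ast,\mathbb{R})=\mathbb{R}$. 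I would define $\theta$ to be the class corresponding to $1\in\mathbb{R}$ under this identification. Equivalently, $\theta$ is the pullback along the morphism $\mathfrak{f}:\bar{X}_{L}\to B_{\mathbb{R}}$ of the previous corollary of the tautological class in $H^1(B_{\mathbb{R}},\widetilde{\mathbb{R}})=\mathbb{R}$ (the identity character of $\mathbb{R}$), since by construction $\mathfrak{f}$ is the Pontryagin dual of the inclusion $\mathbb{R}=H_{\mathcal{T}}^1(\bar{X}_{L},\widetilde{\mathbb{R}})\hookrightarrow H_{\mathcal{T}}^1(\bar{X}_{L},\widetilde{\mathbb{S}}^1)=\pi_1(\bar{X}_{L},p)^{D}$.

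Next, assume that $\pi_1(\bar{X}_{L},p)$ is represented by a locally compact group $G$. By Proposition \ref{prop-pi1-represents},
$$H_{\mathcal{T}}^{1}(\bar{X}_{L},\widetilde{\mathbb{R}})\simeq\underline{Hom}_{\mathcal{T}}(\pi_1(\bar{X}_{L},p),y(\mathbb{R})),$$
and taking global sections gives
$$H^1(\bar{X}_{L},\widetilde{\mathbb{R}})\simeq Hom_{Gr(\mathcal{T})}(\pi_1(\bar{X}_{L},p),y(\mathbb{R}))=Hom_{cont}(G,\mathbb{R}),$$
the last equality because $y$ is fully faithful on locally compact groups. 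Since $\mathbb{R}$ is Hausdorff and abelian, every such morphism factors through the maximal Hausdorff abelian quotient $G^{ab}$, and by Theorem \ref{main-thm} we have an isomorphism of topological groups $G^{ab}\simeq Pic(\bar{X})$. Consequently
$$H^1(\bar{X}_{L},\widetilde{\mathbb{R}})\simeq Hom_{cont}(Pic(\bar{X}),\mathbb{R}).$$

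Finally I would check that under this chain of isomorphisms $\theta$ corresponds to the canonical continuous morphism $Pic(\bar{X})\to\mathbb{R}$ of the exact sequence $0\to Pic^1(\bar{X})\to Pic(\bar{X})\to\mathbb{R}\to 0$. This is the main (and only substantive) point, and it is really just a traceback through the proof of Theorem \ref{main-thm}: the surjection $\pi_1(\bar{X}_{L},p)^{DD}\twoheadrightarrow\mathbb{R}$ in the exact sequence (\ref{exact-sequ-fund-group}) was obtained as the Pontryagin dual of the inclusion $\mathbb{R}=H_{\mathcal{T}}^1(\bar{X}_{L},\widetilde{\mathbb{R}})\hookrightarrow H_{\mathcal{T}}^1(\bar{X}_{L},\widetilde{\mathbb{S}}^1)$, and its identification with the canonical map $Pic(\bar{X})\to\mathbb{R}$ was precisely the content of the proof. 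Since $\theta$ was defined above from the same inclusion, the two coincide by naturality of Pontryagin duality together with Corollary \ref{DD=abelian}. No fundamentally new computation is required; the main obstacle is simply bookkeeping of the identifications so that the normalizations match, which is automatic from the definition of $\mathfrak{f}$.
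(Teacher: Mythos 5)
Your proof is correct and follows essentially the same route as the paper. The paper defines $\theta$ primarily as $\pi^*(\alpha)$, where $\alpha\colon Pic(\bar{X})\rightarrow\mathbb{R}$ is the canonical morphism viewed in $H^1(B_{Pic(\bar{X})},\widetilde{\mathbb{R}})=Hom_{Top}(Pic(\bar{X}),\mathbb{R})$, and only then observes the equivalent description $\theta=\mathfrak{f}^*(Id_\mathbb{R})$; this makes the identification of $\theta$ with $\alpha$ in the representable case immediate once $\pi^*$ is seen to be an isomorphism, with no traceback needed. You start from the slightly less canonical description ``the class corresponding to $1\in\mathbb{R}$'' (Hypothesis \ref{Hyp2} supplies an isomorphism $H^1_{\mathcal{T}}(\bar{X}_L,\widetilde{\mathbb{R}})\simeq y(\mathbb{R})$ but does not single out a generator by itself), and then pin it down via $\mathfrak{f}^*(Id_\mathbb{R})$ as the paper also does; your final traceback step is correct but is exactly the bookkeeping that the paper's choice of primary definition via $\pi^*(\alpha)$ is designed to avoid.
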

\begin{proof} the canonical map $\pi:\bar{X}_{L}\rightarrow B_{Pic(\bar{X})}$
induces a map
$$\pi^*:H^1(B_{Pic(\bar{X})},\widetilde{\mathbb{R}})\longrightarrow H^1(\bar{X}_{L},\widetilde{\mathbb{R}}).$$
The direct image of the unique map
$\mathcal{T}\rightarrow\underline{Sets}$ is exact hence we have
$$H^1(B_{Pic(\bar{X})},\widetilde{\mathbb{R}})=H^0(\mathcal{T},H^1_{\mathcal{T}}(B_{Pic(\bar{X})},\widetilde{\mathbb{R}}))
=Hom_{Top}(Pic(\bar{X}),\mathbb{R}).$$ Therefore the usual
continuous morphism $\alpha:Pic(\bar{X})\rightarrow\mathbb{R}$ is a
distinguished element $\alpha\in
H^1(B_{Pic(\bar{X})},\widetilde{\mathbb{R}})$. We define the
fundamental class as follows :
$$\theta:=\pi^*(\alpha)\in H^1(\bar{X}_{L},\widetilde{\mathbb{R}}).$$
Note that the fundamental class $\varphi$ can also be defined by
$$\theta:=\mathfrak{f}^*(Id_\mathbb{R})\in H^1(\bar{X}_{L},\widetilde{\mathbb{R}}),$$
where $Id_\mathbb{R}$ is the distingushed non-zero element of
$H^1(B_{\mathbb{R}},\widetilde{\mathbb{R}})=Hom_{Top}(\mathbb{R},\mathbb{R})$.

Finally, if the fundamental group $\pi_1(\bar{X}_{L},p)$ is representable by a locally compact group, then the map
$$\pi^*:H^1(B_{Pic(\bar{X})},\widetilde{\mathbb{R}})\longrightarrow H^1(\bar{X}_{L},\widetilde{\mathbb{R}})$$
is an isomorphism, and $\theta$ can be identified with $\alpha$. Indeed, Theorem \ref{main-thm} yields in this case
\begin{eqnarray*}
H^1(\bar{X}_{L},\widetilde{\mathbb{R}})&=&Hom_{cont}(\pi_1(\bar{X}_{L},p),\mathbb{R})\\
&=&Hom_{cont}(\pi_1(\bar{X}_{L},p)^{ab},\mathbb{R})\\
&=&Hom_{cont}(Pic(\bar{X}),\mathbb{R}).
\end{eqnarray*}
\end{proof}

\subsection{The fundamental group and unramified class field theory}\label{subsect-pi1&un-CFT}
There exist complexes $R_W(\varphi_!\mathbb{Z})$ and $R_W(\mathbb{Z})$ of sheaves on the Artin-Verdier \'etale topos whose hypercohomology is the conjectural Lichtenbaum cohomology with and without compact support respectively (see \cite{On the WE}). This suggests the existence of a canonical morphism of topoi
$$\gamma:\bar{X}_{L}\longrightarrow \bar{X}_{et}$$
such that $R\gamma_*\mathbb{Z}=R_W(\mathbb{Z})$, where $\bar{X}_{et}$ denotes the Artin-Verdier étale topos of $X$.
One the one hand, the complex $R_W(\mathbb{Z})$ yields a canonical map
$$H^n(\bar{X}_{et},\mathbb{Z})\longrightarrow H_L^n(\bar{X},\mathbb{Z})$$
for any $n\geq0$. In degree $n=2$, this map
\begin{equation}\label{map-etcoh-to-Wcoh-in-deg2}
Pic(X)^D=(\pi_1(\bar{X}_{et})^{ab})^D=H^2(\bar{X}_{et},\mathbb{Z})\longrightarrow H_L^2(\bar{X},\mathbb{Z}):=Pic^1(\bar{X})^D
\end{equation}
is the dual map of the canonical morphism $Pic^1(\bar{X})\rightarrow Pic(X)=Cl(F)$.
On the other hand, the morphism $\gamma$ would induce a morphism of abelian fundamental
groups
\begin{equation}\label{fund-gr-map}
\pi_1(\bar{X}_{L},p)^{DD}\longrightarrow\pi_1(\bar{X}_{et},q)^{DD}\simeq\pi_1(\bar{X}_{et})^{ab}
\end{equation}
where $q$ is a geometric point of $\bar{X}$ such that the following
diagram commutes :
\[\xymatrix{
\bar{X}_{L}\ar[r]^{\gamma}&\bar{X}_{et}\\
\mathcal{T}\ar[r]^{e_{\mathcal{T}}}\ar[u]_p&\underline{Sets}\ar[u]_q }\] Note that $q$
is uniquely determined by $p$ since the unique map
$e_{\mathcal{T}}: \mathcal{T}\rightarrow\underline{Sets}$ has a canonical section $s$ (see \cite{SGA4} IV.4.10). Indeed, one has $e_{\mathcal{T}}\circ s=Id$ hence
\begin{equation}\label{p-induces-q}
q\simeq q\circ e_{\mathcal{T}}\circ s\simeq \gamma\circ p\circ s.
\end{equation}
The map (\ref{fund-gr-map}) needs to be compatible with the canonical map (\ref{map-etcoh-to-Wcoh-in-deg2}). In other words, the following morphism should be the \emph{reciprocity map of class
field theory} :
\begin{equation}\label{fund-gr-map2}
Pic(\bar{X})\simeq\pi_1(\bar{X}_{L},p)^{DD}\longrightarrow\pi_1(\bar{X}_{et})^{ab}
\end{equation}
More precisely, the diagram
\[ \xymatrix{
Pic(\bar{X})\ar[r]\ar[d]&Pic(X)=
Cl(F)\ar[d]\\
\pi_1(\bar{X}_{L},p)^{DD}\ar[r]^{(\ref{fund-gr-map})}&\pi_1(\bar{X}_{et})^{ab}
}\] should be commutative, where $Pic(\bar{X})\rightarrow
Pic(X)= Cl(F)$ is the canonical map, $Cl(F)\rightarrow
\pi_1(\bar{X}_{et})^{ab}$ is the isomorphism of unramified class
field theory and $Pic(\bar{X})\rightarrow\pi_1(\bar{X}_{L},p)^{DD}$
is the isomorphism defined in Theorem \ref{main-thm}.

\subsection{The fundamental group and the closed embedding $i_v$}\label{subsect-specul-closed-pts}
For any closed point $v$ of $\bar{X}$, i.e. any non-trivial
valuation of the number field $F$, we denote by
$W_{k(v)}:=F_v^{\times}/\mathcal{O}^{\times}_{F_v}$ the Weil group of the residue field $k(v)$ at $v$, where $\mathcal{O}^{\times}_{F_v}$ is the kernel of the valuation $F_v^{\times}\rightarrow\mathbb{R}^{\times}$.

Let ${U}\subseteq{X}$ be an open sub-scheme.
The conjectural Lichtenbaum cohomology with compact support is defined as follows (see the introduction of \cite{Lichtenbaum}) :
$$H_c^*({U},\mathcal{A}):=H^*(\bar{X}_{L},\varphi_!\mathcal{A})$$
where $$\varphi:U_{L}:=\bar{X}_{L}/\gamma^*{U}\longrightarrow\bar{X}_{L}$$
is the canonical open embedding. Consider the exact sequence
\begin{equation}\label{exact-sequ-for-specul}
0\rightarrow\varphi_!\varphi^*\mathcal{A}\rightarrow\mathcal{A}\rightarrow i_*i^*\mathcal{A}\rightarrow0,
\end{equation}
where $i:\textsc{F}:\rightarrow\bar{X}_{L}$ the embedding of the closed complement of the open subtopos $\varphi:U_{L}\rightarrow\bar{X}_{L}$.
The morphism $i$ is a closed embedding so that $i_*$ is exact. We obtain
\begin{equation}\label{cohomlogy-of-specul}
H^n(\textsc{F},i^*\mathcal{A})=H^n(\bar{X}_{L},i_*i^*\mathcal{A}).
\end{equation}
Using (\ref{exact-sequ-for-specul}) and (\ref{cohomlogy-of-specul}), we see that
the conjectural Lichtenbaum cohomology with and without compact support determines the cohomology of the closed sub-topos $\textsc{F}$ (with coefficients in $\mathbb{Z}$ and $\widetilde{\mathbb{R}}$), and we find
$$H^*(\textsc{F},i^*\mathcal{A})=H^*(\textsc{F},\mathcal{A})=H^*(\coprod_{v\in\bar{X}-U}B_{W_{k(v)}},\mathcal{A})$$
for $\mathcal{A}=\mathbb{Z}$ and $\mathcal{A}=\widetilde{\mathbb{R}}$.
This suggests the existence of an equivalence
\begin{equation}\label{specul-closed-subtopos}
\textsc{F}\simeq\coprod_{v\in\bar{X}-U}B_{W_{k(v)}}
\end{equation}
The equivalence (\ref{specul-closed-subtopos}) is indeed satisfied (see \cite{these} Chapter 7) by the Weil-\'etale topos in characteristic $p$ (which is the correct Lichtenbaum topos in this case). Moreover, (\ref{specul-closed-subtopos}) is also predicted by Deninger's program (see \cite{these} Chapter 9). Hence the equivalence (\ref{specul-closed-subtopos}) should hold. Using (\cite{SGA4} IV. Cor. 9.4.3), (\cite{On the WE} Prop. 6.2), and the universal property of sums of topoi, one can prove that (\ref{specul-closed-subtopos}) is equivalent to the existence of a pull-back diagram of topoi :
\[ \xymatrix{
B_{W_{k(v)}}\ar[d]_{i_v}\ar[r]&B^{sm}_{G_{k(v)}}\ar[d]_{u_v}\\
\bar{X}_{L}\ar[r]^{\gamma}&\bar{X}_{et} }\] for any $v$ not in $U$. For an ultrametric place $v$, the morphism
$$u_v:B^{sm}_{G_{k(v)}}\simeq Spec(k(v))_{et}\longrightarrow\bar{X}_{et}$$
is defined by  the scheme map $v\rightarrow\bar{X}$ (see \cite{On the WE} Prop. 6.2) and by a geometric point of $\bar{X}$ over $v$. If $v$ is archimedean, $G_{k(v)}=\{1\}$ and $u_v:\underline{Sets}\rightarrow\bar{X}_{et}$ is the point of the \'etale topos corresponding to $v\in\bar{X}$. In particular, for any closed point $v$ of $\bar{X}$, we have a \emph{closed embedding} of topoi
\begin{equation}\label{specul-iv}
i_v:B_{W_{k(v)}}\longrightarrow \bar{X}_{L}
\end{equation}
where $B_{W_{k(v)}}$ is the classifying topos of $W_{k(v)}$. For any closed point $v$ of $\bar{X}$, the composition
$$B_{W_{k(v)}}\longrightarrow \bar{X}_{L}\longrightarrow B_{Pic(\bar{X})}$$
should be the morphism of classifying topoi $B_{W_{k(v)}}\rightarrow
B_{Pic(\bar{X})}$ induced by the canonical morphism of topological
groups (see (\ref{map-residue-Weil-group-to-Pic}))
$${W_{k(v)}}\longrightarrow{Pic(\bar{X})}.$$

Finally the existence of the morphism (\ref{specul-iv}) is also suggested by the following argument. For an ultrametric place $v$, $B_{W_{k(v)}}$ is the Lichtenbaum topos of $Spec(k(v))$. Hence the existence of the morphism (\ref{specul-iv}) follows from the fact that the map $$\bar{X}\rightsquigarrow\bar{X}_{L},$$
sending a (regular) arithmetic scheme to the topos of sheaves on the Grothendieck site conjectured in \cite{Lichtenbaum},
should be a pseudo-functor from the category of (regular) arithmetic schemes
to the 2-category of topoi.

\section{Expected properties of the conjectural Lichtenbaum topos}\label{sect-expected-properties}

The conjectural Lichtenbaum cohomology is in fact known for any étale
$\bar{X}$-scheme $\bar{U}$, and the arguments of the previous section
give the value of the abelian arithmetic fundamental group of
$\bar{U}$. More precisely, one should have
$$\pi_1(\bar{U}_{L},p_{\bar{U}})^{DD}\simeq C_{\bar{U}}$$
where $C_{\bar{U}}$ is the $S$-idèle class group naturally associated to $\bar{U}$ (see (\ref{equ-defn-CU}) below). Moreover, the study of the complexes $R_W(\mathbb{Z})$ and $R_W(\widetilde{\mathbb{R}})$ defined in \cite{On the WE} yields the
functorial behavior of these isomorphisms. The relation between the arithmetic fundamental group and the \'etale fundamental group is given by the natural maps between \'etale cohomology groups and conjectural Lichtenbaum cohomology groups (see section \ref{subsect-pi1&un-CFT}). Finally, the structure of
the conjectural Lichtenbaum topos at the closed points is dictated
by the conjectural Lichtenbaum cohomology with
compact support (see section \ref{subsect-specul-closed-pts}). Putting those facts together, we obtain a (partial) description of the conjectural Lichtenbaum topos. This description is also suggested by our previous study of the Weil-étale topos in characteristic $p$ (see \cite{these} Chapter 8) and by the work of C. Deninger (see \cite{these} Chapter 9).

\subsection{Notations}
We refer to \cite{On the WE} for the definition of the Artin-Verdier \'etale site of $\bar{X}=\overline{Spec(\mathcal{O}_F)}$. The Artin-Verdier \'etale topos $\bar{X}_{et}$ is the category of sheaves of sets on the Artin-Verdier \'etale site. Let $\bar{U}=(Spec\,\mathcal{O}_{K,S_0},U_{\infty})$ be a connected
étale $\bar{X}$-scheme then we consider the $S$-idèle class group of $K$ endowed with the quotient topology :
\begin{equation}\label{equ-defn-CU}
C_{\bar{U}}:=C_{K,S}=coker(\prod_{w\in\bar{U}}\mathcal{O}^{\times}_{K_w}\rightarrow C_K).
\end{equation}
Here $S$ is the set of places of $K$ not corresponding to a point of
$\bar{U}$, $K_w$ is the completion of $K$ at the place $w$ and $\mathcal{O}^{\times}_{K_w}$ is the kernel of the valuation $K_w^{\times}\rightarrow\mathbb{R}^{\times}$. Note that $C_{\bar{U}}$ is a Hausdorff locally compact group canonically associated to $\bar{U}$.

We define the \emph{Weil group} $W_{k(w)}$ of the "residue field $k(w)$" at
any closed point $w$ of $\bar{U}$ as follows:
$$W_{k(w)}:=K_w^{\times}/\mathcal{O}^{\times}_{K_w}.$$
For any closed point $w\in\bar{U}$, the map $K_w^{\times}\rightarrow C_K$ induces a
continuous morphism
\begin{equation}\label{map-residue-Weil-group-to-CU}
W_{k(w)}:=K_w^{\times}/\mathcal{O}^{\times}_{K_w}\longrightarrow
C_{\bar{U}}.
\end{equation}
Note that we have $W_{k(w)}\simeq\mathbb{Z}$ for $w$ ultrametric and $W_{k(w)}\simeq\mathbb{R}_+^{\times}$ for $w$ archimedean. We denote by $G_{k(w)}:=D_w/I_w$ the Galois group of the residue field $k(w)$, where $D_w$ and $I_w$ are respectively the decomposition and the inertia subgroups of $G_K$ at $w$. Hence $G_{k(w)}$ is the trivial group for $w$ archimedean. There is a canonical morphism
\begin{equation}\label{map-residue-W-G}
W_{k(w)}\longrightarrow G_{k(w)}
\end{equation}
for any closed point $w\in\bar{U}$. We consider the big classifying topos $B_{W_{k(w)}}$ and the small classifying topos $B^{sm}_{G_{k(w)}}$, i.e. the category of continuous $G_{k(w)}$-sets. In particular $B^{sm}_{G_{k(w)}}$ is just the final topos $\underline{Sets}$ for $w$ archimedean. The map (\ref{map-residue-W-G}) induces a morphism of toposes :
$$\alpha_v:B_{W_{k(w)}}\longrightarrow B^{sm}_{G_{k(w)}}.$$

We denote by $\mathcal{T}$ the topos of sheaves on the site $(Top,\mathcal{J}_{op})$, where $Top$ is the category of Hausdorff locally compact spaces endowed with the open cover topology. If one needs to use constant sheaves represented by non-locally compact spaces, then we can define $\mathcal{T}':=(Top^h,\mathcal{J}_{op})$ where $Top^h$ is the category of Hausdorff spaces, and consider the base change
$$\bar{X}_{L}\times_{\mathcal{T}}\mathcal{T}'$$
to obtain a connected and locally connected topos over $\mathcal{T}'$.

Finally, if $\underline{\mathcal{G}}$ is a strict pro-group object of
$\mathcal{T}$ given by a covariant functor $\underline{\mathcal{G}}:I\rightarrow Gr(\mathcal{T})$, where $Gr(\mathcal{T})$ denotes the category of groups in $\mathcal{T}$, and $I$ is a small filtered category. We consider the  pro-abelian group object $\underline{\mathcal{G}}^{DD}$ of
$\mathcal{T}$ defined as the composite functor $$(-)^{DD}\circ\underline{\mathcal{G}}:I\longrightarrow Gr(\mathcal{T})\longrightarrow Ab(\mathcal{T}).$$

Let $t:\mathcal{E}\rightarrow\mathcal{T}$ be a connected and locally connected topos over $\mathcal{T}$, i.e. $t$ is a connected and locally connected morphism. In particular $t^*$ has a left adjoint $t_!$. An object $X$ of $\mathcal{E}$ is said to be \emph{connected over $\mathcal{T}$} if $t_!X$ is the final object of $\mathcal{T}$. A \emph{$\mathcal{T}$-point of $\mathcal{E}$} is a section $s:\mathcal{T}\rightarrow\mathcal{E}$ of the structure map $t$, i.e. $t\circ s$ is isomorphic to $Id_{\mathcal{T}}$.

\subsection{Expected properties}\label{subsect-expected-properties}
\begin{enumerate}

\item \emph{The conjectural Lichtenbaum topos} $\bar{X}_{L}$ \emph{should be naturally associated to $\bar{X}$. There should be a canonical connected morphism from $\bar{X}_{L}$ to the Artin-Verdier étale topos :}
    $$\gamma:\bar{X}_{L}\longrightarrow \bar{X}_{et}.$$\\

\item \emph{The conjectural Lichtenbaum topos} $\bar{X}_{L}$ \emph{should be defined over $\mathcal{T}$. The structure map}
$$t:\bar{X}_{L}\longrightarrow\mathcal{T}$$
\emph{should be connected and locally connected, and} $\bar{X}_{L}$ \emph{should have a $\mathcal{T}$-point $p$. For any connected étale
$\bar{X}$-scheme $\bar{U}$, the object $\gamma^*\bar{U}$ of $\bar{X}_{L}$ should be connected over $\mathcal{T}$.}

It follows that the slice topos
$$\bar{U}_{L}:=\bar{X}_{L}/\gamma^*\bar{U}\longrightarrow\bar{X}_{L}\longrightarrow\mathcal{T}$$ is connected and locally connected over $\mathcal{T}$, for any connected étale
$\bar{X}$-scheme $\bar{U}$, and has a $\mathcal{T}$-point: $$p_{\bar{U}}:\mathcal{T}\longrightarrow \bar{U}_{L}.$$
Then the fundamental group $\pi_1(\bar{U}_{L},p_{\bar{U}})$ is well defined as a prodiscrete localic group in $\mathcal{T}$. Moreover, $\pi_1(\bar{U}_{L},p_{\bar{U}})$ \emph{should be pro-representable by a locally compact strict pro-group}, and we consider this fundamental group as a locally compact pro-group. By Corollary \ref{DD=abelian}, we have
$$\pi_1(\bar{U}_{L},p_{\bar{U}})^{DD}=\pi_1(\bar{U}_{L},p_{\bar{U}})^{ab}=\pi_1(\bar{U}_{L})^{ab}.$$
We have a canonical connected morphism
$$\bar{U}_{L}:=\bar{X}_{L}/\gamma^*\bar{U}\longrightarrow\bar{X}_{et}/\bar{U}=\bar{U}_{et}$$
inducing a morphism
$$\varphi_{\bar{U}}:\pi_1(\bar{U}_{L},p_{\bar{U}})\longrightarrow\pi_1(\bar{U}_{et},q_{\bar{U}})$$
where $q_{\bar{U}}$ is defined by $p_{\bar{U}}$ as in (\ref{p-induces-q}). We obtain a morphism
$$\varphi^{DD}_{\bar{U}}:\pi_1(\bar{U}_{L})^{ab}
=\pi_1(\bar{U}_{L},p_{\bar{U}})^{DD}\longrightarrow\pi_1(\bar{U}_{et},p_{\bar{U}})^{DD}=\pi_1(\bar{U}_{et})^{ab}.$$\\

\item \emph{One should have a canonical isomorphism}
$$r_{\bar{U}}:C_{\bar{U}}\simeq\pi_1(\bar{U}_{L})^{ab}$$
\emph{such that the composition}
$$\varphi_{\bar{U}}^{DD}\circ r_{\bar{U}}:\,\,\,\, C_{\bar{U}}\simeq\pi_1(\bar{U}_{L})^{ab}\longrightarrow\pi_1(\bar{U}_{et})^{ab}$$
\emph{is the reciprocity law of class field theory}. This reciprocity
morphism is defined by the topological class formation
$$(\pi_1(\bar{U}_{et},q_{\bar{U}}),\underrightarrow{lim}\,C_{\bar{V}})$$
where $\bar{V}$ runs over the filtered system of pointed étale cover of $(\bar{U},q_{\bar{U}})$
(see \cite{Neukirch} Proposition 8.3.8 and \cite{Neukirch} Theorem 8.3.12).\\

\item \emph{The isomorphism $r_{\bar{U}}$ should be covariantly
functorial for any map $f:\bar{V}\rightarrow\bar{U}$ of connected \'etale $\bar{X}$-schemes.}
More precisely, such a map induces a morphism of toposes
$$f_{L}:\bar{V}_{L}:=\bar{X}_{L}/\bar{V}\longrightarrow\bar{U}_{L}:=\bar{X}_{L}/\bar{U}$$
hence a morphism of abelian pro-groups in $\mathcal{T}$
$$\widetilde{f}_{L}:\pi_1(\bar{V}_{L})^{ab}\longrightarrow \pi_1(\bar{U}_{L})^{ab}.$$
Then the following
diagram should be commutative
\[ \xymatrix{
\pi_1(\bar{V}_{L})^{ab}\ar[r]^{\,\,\,\,\,\,\,\,\,\,\,\,\,\,r_{\bar{V}}}\ar[d]_{\widetilde{f}_{L}}& C_{\bar{V}}\ar[d]^N\\
\pi_1(\bar{U}_{L})^{ab}\ar[r]^{\,\,\,\,\,\,\,\,\,\,\,\,\,\,r_{\bar{U}}}&C_{\bar{U}}
}\] where $N$ is induced by the norm map.\\

\item \emph{For any Galois étale cover $\bar{V}\rightarrow\bar{U}$ (of étale $\bar{X}$-schemes), the
conjugation action on $\pi_1(\bar{V}_{L})^{ab}$ should
correspond to the Galois action on $C_{\bar{V}}$}. In other words,
the following diagram should be commutative
\[ \xymatrix{
\pi_1(\bar{U}_{L},p_{\bar{U}})\times\pi_1(\bar{V}_{L})^{ab}
\ar[rr]^{\,\,\,\,\,\,\,\,\,\,\,\,\,\,(\varphi_{\bar{U}}, r_{\bar{V}})}\ar[d]& &\pi_1(\bar{U}_{et},q_{\bar{U}})\times C_{\bar{V}}\ar[d]\\
\pi_1(\bar{V}_{L})^{ab}\ar[rr]^{\,\,\,\,\,\,\,\,\,\,\,\,\,\,r_{\bar{V}}}&
&C_{\bar{V}} }\] where the vertical arrows are the conjugation
action of $\pi_1(\bar{U}_{L},p_{\bar{U}})$ on
$\pi_1(\bar{V}_{L})^{ab}$ and the natural action of
$\pi_1(\bar{U}_{et},p_{\bar{U}})$ on $C_{\bar{V}}$.\\

\item \emph{The isomorphism $r_{\bar{U}}$ should be contravariantly functorial for
an étale cover.} More precisely, let $\bar{V}\rightarrow\bar{U}$ be
a \emph{finite} étale map. Then the following diagram should be
commutative
\[ \xymatrix{
\pi_1(\bar{V}_{L})^{ab}\ar[r]^{\,\,\,\,\,\,\,\,\,\,\,\,\,\,r_{\bar{V}}}& C_{\bar{V}}\\
\pi_1(\bar{U}_{L})^{ab}\ar[r]^{\,\,\,\,\,\,\,\,\,\,\,\,\,\,r_{\bar{U}}}\ar[u]_{\mbox{tr}}&C_{\bar{U}}\ar[u]
}\] where the map $C_{\bar{U}}\rightarrow C_{\bar{V}}$ is the
inclusion, and $\mbox{tr}$ is the transfer map defined in Proposition \ref{prop-transfer} below.\\

\item \emph{For any closed point $v$ of $\bar{X}$, one should have pull-back of
topoi :}
\[ \xymatrix{
B_{W_{k(v)}}\ar[d]_{i_v}\ar[r]^{\alpha_v}&B^{sm}_{G_{k(v)}}\ar[d]_{u_v}\\
\bar{X}_{L}\ar[r]^{\gamma}&\bar{X}_{et} }\] Here the morphism
$$u_v:B^{sm}_{G_{k(v)}}\simeq Spec(k(v))_{et}\longrightarrow\bar{X}_{et}$$
is defined by a geometric point of $\bar{X}$ over $v$ and by the
scheme map $v\rightarrow\bar{X}$. The map $\alpha_v$ is induced by
the canonical morphism $W_{k(v)}\rightarrow G_{k(v)}$. It follows
that the morphism $i_v$ is a \emph{closed embedding}.\\

One the one hand, the pull-back
above induces a closed embedding
$$i_w:B_{W_{k(w)}}\longrightarrow\bar{U}_{L}$$ for any $\bar{U}$ étale over
$\bar{X}$ and any closed point $w$ of $\bar{U}$. On the other hand one has a canonical morphism
$$\bar{U}_{L}\rightarrow B_{\pi_1(\bar{U}_{et},p_{\bar{U}})}\rightarrow B_{\pi_1(\bar{U}_{et})^{ab}}\simeq B_{C_{\bar{U}}}.$$
\item \emph{For any closed point $w$ of a connected étale $\bar{X}$-scheme $\bar{U}$, the composition}
$$B_{W_{k(w)}}\longrightarrow \bar{U}_{L}\longrightarrow B_{C_{\bar{U}}}$$
\emph{should be the morphism of classifying topoi induced by the canonical
morphism of topological groups ${W_{k(w)}}\rightarrow{C_{\bar{U}}}$.}\\

Define the sheaf of continuous real valued functions on $\bar{X}_{L}$ as $\widetilde{\mathbb{R}}:=t^*y\mathbb{R}$ where $y\mathbb{R}$ is the sheaf of $\mathcal{T}$ represented by the standard topological group $\mathbb{R}$.
\item \emph{For any \'etale $\bar{X}$-scheme $\bar{U}$, one should have} $H^n(\bar{U}_{L},\widetilde{\mathbb{R}})=0$ \emph{for any $n\geq2$.}

\end{enumerate}

\bigskip

The following result shows that the axioms listed above are consistent. A proof is given in \cite{Fundamental-group-II}.
\begin{thm}
There exists a topos satisfying Axioms $(1)-(9)$ listed above.
\end{thm}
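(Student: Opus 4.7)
The strategy is to construct $\bar{X}_L$ by ``Weilifying'' the Artin-Verdier \'etale topos at each closed point: one replaces the residual small classifying topos $B^{sm}_{G_{k(v)}}$ attached to $v\in\bar{X}$ by the big classifying topos $B_{W_{k(v)}}$ of the local Weil group, glued along the canonical morphism $\alpha_v:B_{W_{k(v)}}\to B^{sm}_{G_{k(v)}}$. I would realise this formally as a 2-categorical pullback/gluing of topoi indexed by open-closed decompositions of $\bar{X}$, or equivalently as the topos of sheaves on a Grothendieck site whose objects are \'etale $\bar{X}$-schemes $\bar{U}$ decorated with compatible $W_{k(v)}$-equivariant data at each closed point $v$ of $\bar{U}$, and whose covers refine the \'etale covers by local Weil-group covers.

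With this definition in hand, Axioms $(1)$, $(2)$ and $(7)$ come essentially for free. The morphism $\gamma:\bar{X}_L\to\bar{X}_{et}$ is the forgetful morphism, and it is connected because each $W_{k(v)}\to G_{k(v)}$ is surjective; the structure map $t:\bar{X}_L\to\mathcal{T}$ is connected and locally connected since each $B_{W_{k(v)}}\to\mathcal{T}$ is, and a $\mathcal{T}$-point $p$ is provided by a choice of place of $F$. For connected \'etale $\bar{U}\to\bar{X}$, the object $\gamma^*\bar{U}$ is connected over $\mathcal{T}$ because it is connected over $\underline{Sets}$ and $\mathcal{T}\to\underline{Sets}$ is strongly acyclic. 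The pull-back diagram at each closed point is built into the construction, yielding the closed embedding $i_v$ of Axiom $(7)$.

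The central verification is the identification $\pi_1(\bar{U}_L)^{ab}\simeq C_{\bar{U}}$ of Axiom $(3)$. Here I would apply Theorem \ref{main-thm} and its evident $\bar{U}$-variant, reducing the problem to computing $H^i_{\mathcal{T}}(\bar{U}_L,\mathbb{Z})$ and $H^i_{\mathcal{T}}(\bar{U}_L,\widetilde{\mathbb{R}})$ in degrees $\leq 2$. These may be computed by a Leray spectral sequence for $\gamma$, whose higher direct images are supported on the closed points and computable via the local cohomologies $H^*_{\mathcal{T}}(B_{W_{k(v)}},-)$; comparing with the exact sequence defining $C_{\bar{U}}$ from its local units and idele class group then recovers the $S$-idele class group on passage to Pontryagin duals. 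Axiom $(8)$ then amounts to matching the local reciprocity map $W_{k(w)}\to C_{\bar{U}}$ at each closed point, which is built into the construction. The functorial Axioms $(4)$, $(5)$, $(6)$ are formal consequences of the functoriality of local Weil groups and of the gluing construction, combined with the known functorial behaviour of class field theory reciprocity on the \'etale side.

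The principal obstacle is Axiom $(9)$: the vanishing $H^n(\bar{U}_L,\widetilde{\mathbb{R}})=0$ for $n\geq 2$. This is precisely where Lichtenbaum's original Weil-\'etale definition famously fails, by the computations of Flach \cite{MatFlach}. The construction must therefore be sufficiently refined at archimedean places that the $B_{W_{k(v)}}$ with $v\mid\infty$ do not produce higher $\widetilde{\mathbb{R}}$-cohomology. I expect this to require a careful choice of how the archimedean Weil groups $W_{k(v)}\simeq\mathbb{R}_{>0}^{\times}$ enter the gluing, exploiting the local acyclicity of $\widetilde{\mathbb{R}}$ on $B_{\mathbb{R}_{>0}^{\times}}$ (via the uniqueness of Haar measure, which forces $H^{\geq 2}_{\mathcal{T}}(B_{\mathbb{R}_{>0}^{\times}},\widetilde{\mathbb{R}})=0$), together with the finite \'etale cohomological dimension of $\bar{X}$ with uniquely divisible coefficients. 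Establishing this vanishing compatibly with the other axioms will be the technical heart of the construction, carried out in \cite{Fundamental-group-II}.
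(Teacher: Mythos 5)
The paper does not actually prove this theorem here: the text immediately preceding the statement reads ``A proof is given in \cite{Fundamental-group-II},'' and no argument appears in the present paper. So there is no in-paper proof to compare against. Your sketch is one plausible strategy, and its overall spirit (``Weilify'' the Artin--Verdier \'etale topos by replacing local Galois data with Weil group data) matches the informal discussion of Sections 4--5, but several specific claims in it are off.

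First, the proposed construction is not actually well-posed as stated. ``A 2-categorical pullback/gluing of topoi indexed by open-closed decompositions of $\bar{X}$'' does not determine a topos: open-closed decompositions of $\bar{X}$ isolate only finitely many closed points at a time, there is no single 2-fiber product encoding a modification at \emph{every} $v$ simultaneously, and the maps $u_v:B^{sm}_{G_{k(v)}}\rightarrow\bar{X}_{et}$ go in the wrong direction for the pullback you write. The alternative suggestion (sheaves on a site of \'etale $\bar{X}$-schemes decorated with $W_{k(v)}$-equivariant data) is closer to the kind of object one might hope for, but it is not what is done in \cite{Fundamental-group-II}, where the topos is built globally out of Weil groups rather than by a closed-point-by-closed-point gluing; the difference matters precisely for verifying the compatibilities in Axioms (4)--(8).

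Second, several of your quick verifications do not hold as stated. For $v$ ultrametric, $W_{k(v)}\rightarrow G_{k(v)}$ is $\mathbb{Z}\hookrightarrow\hat{\mathbb{Z}}$, which is injective with dense image but not surjective; connectedness of $\alpha_v$ and hence of $\gamma$ requires the density argument, not surjectivity. The claim that $\gamma^*\bar{U}$ is connected over $\mathcal{T}$ ``because it is connected over $\underline{Sets}$ and $\mathcal{T}\rightarrow\underline{Sets}$ is strongly acyclic'' is a non-sequitur: strong acyclicity is exactness of the direct image on abelian objects and says nothing about $t_!\gamma^*\bar{U}$ being the terminal object.

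Third, and most substantively, your diagnosis of the obstruction is mistaken. Axiom (9) concerns $\widetilde{\mathbb{R}}$-coefficients. Flach's negative result in \cite{MatFlach}, quoted in the introduction, is that $H^i_W(\bar{X},\mathbb{Z})$ is infinitely generated for even $i\geq 4$ --- i.e. it concerns $\mathbb{Z}$-coefficients, and it is precisely what forces the truncation $\tau_{\leq 2}R\gamma_*$ appearing throughout Section 6. For $\widetilde{\mathbb{R}}$-coefficients the classifying topos $B_{\mathbb{R}}$ is already well behaved: the paper itself invokes $H^n(B_{\mathbb{R}},\mathbb{R}^k)=0$ for $n\geq 2$ (Flach, Prop.\ 9.6) inside the proof of Theorem \ref{main-thm}. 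So Axiom (9) is not ``where Lichtenbaum's original definition famously fails,'' and there is no reason to expect the archimedean $B_{W_{k(v)}}$ to be the source of trouble for $\widetilde{\mathbb{R}}$. The hard content of Axiom (9) lies elsewhere in the global structure of the topos, and your sketch does not engage with it. In the end you, like the paper, defer the real work to \cite{Fundamental-group-II}, so the proposal is an outline rather than a proof; what can be checked here contains the errors above.
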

Note that the isomorphism $r_{\bar{U}}:C_{\bar{U}}\simeq\pi_1(\bar{U}_{L})^{ab}$ can be understood in two different ways. On the one hand, one can consider $\pi_1(\bar{U}_{L})^{ab}$ as a usual topological group defined as the projective limit of the topological pro-group $\pi_1(\bar{U}_{L})^{DD}$. Then $r_{\bar{U}}$ is just an isomorphism of topological groups. On the other hand, one can consider $\pi_1(\bar{U}_{L})^{ab}$ and $C_{\bar{U}}$ as topological pro-groups (see section \ref{subsect-S-idele-pro-grp} below), and assume that $r_{\bar{U}}$ is an isomorphism of topological pro-groups. The second point of view is stronger than the first.

\subsection{Explanations}
In this section, we define the morphisms used in the previous description. First of all, the fundamental group $\pi_1(\bar{U}_{L},p_{\bar{U}})$ is assumed to be pro-representable by a locally compact strict pro-group. In other words, we assume that there exist a locally compact strict pro-group $\underline{G}$ indexed over a small filtered category (in the usual sense, see definition \ref{defn-progroup}) and an equivalence $SLC_{\mathcal{T}}(\bar{U}_L)\simeq B_{\underline{G}}$ compatible with the point $p_{\bar{U}}$, where $SLC_{\mathcal{T}}(\bar{U}_L)$ and $B_{\underline{G}}$ are defined as in \cite{Bunge-Moerdijk} Section 2 and as in Definition \ref{defn-classifying-topos-pro-grp} respectively.

The fact that the fundamental groups $\pi_1(\bar{U}_{L},p_{\bar{U}})$ and $\pi_1(\bar{U}_{et},q_{\bar{U}})$ should be defined as topological pro-groups and the previous description of the Lichtenbaum topos suggests that the groups $C_{\bar{U}}$ is in fact a topological pro-group and that all the maps between these topological pro-group are compatible with this additional structure. We show below that it is indeed the case. This detail can be ignored if one considers the limit of those topological pro-groups computed in the category of topological groups, and the morphisms between these pro-groups as usual continuous morphisms.

\subsubsection{The $S$-id\`ele class group as a pro-group}\label{subsect-S-idele-pro-grp}
Let $\bar{U}=(Spec\,\mathcal{O}_{K,S_0},U_{\infty})$ be a connected
étale $\bar{X}$-scheme. We denote by $S_{\infty}$ the set of archimedean places of $K$ not corresponding to a point of $U_{\infty}$, i.e. $U_{\infty}\coprod S_{\infty}$ is the set of archimedean places of $K$. If we set $S=(S_0\cup S_{\infty})$ then we have
$$\bar{U}=\overline{Spec\,\mathcal{O}_{K}}-S$$
and $C_{\bar{U}}=C_{K,S}$ is the $S$-id\`ele class group of $K$. Assume for simplicity that $S_{\infty}\neq\emptyset$. Then there is an exact sequence of topological groups
\begin{equation}\label{exact-sequ-CU-progrp}
0\rightarrow\prod_{w\in S_0}\mathcal{O}^{\times}_{K_w} \rightarrow C_{K,S}\rightarrow C_{K,S_\infty}\rightarrow 0
\end{equation}
where $C_{K,S_{\infty}}$ is the following extension of the finite group $Cl(K)$:
$$0\rightarrow(\prod_{w\in S_{\infty}}K_w^{\times}\prod_{w\in U_{\infty}}\mathbb{R}_+^{\times})/\mathcal{O}^{\times}_{K}\rightarrow C_{K,S_{\infty}}\rightarrow Cl(K)\rightarrow0$$
Note that $C_{K,S_{\infty}}$ has a finite filtration such that the quotients of the form $Fil^n/Fil^{n+1}$ are either finite or connected. Recall that, for $w$ ultrametric, $\mathcal{O}^{\times}_{K_w}$ is given with the filtration
$$\mathcal{O}^{\times}_{K_w}={U}_w^0\supseteq{U}_w^1\supseteq{U}_w^2\supseteq...$$
so that $\mathcal{O}^{\times}_{K_w}$ is the profinite group:
$$\mathcal{O}^{\times}_{K_w}=\underleftarrow{lim}\,{U}_w^0/{U}_w^n.$$
Hence the exact sequence (\ref{exact-sequ-CU-progrp}) provides $C_{\bar{U}}$ with a structure of a topological pro-group. More precisely, one has
$$C_{K,S}=\underleftarrow{lim}\,C_{K,S}/\Omega$$
where $\Omega$ runs over the system of open subgroups of $\prod_{w\in S_0}\mathcal{O}^{\times}_{K_w}$.
\begin{defn}
We define $C_{\bar{U}}$ as the topological pro-group
$$C_{\bar{U}}:=\{C_{K,S}/\Omega,\mbox{ for $\Omega$ open in $\prod_{w\in S_0}\mathcal{O}^{\times}_{K_w}$}\}.$$
\end{defn}
The pro-group $C_{\bar{U}}$ can also be seen as the locally compact group $C_{K,S}$ endowed with the filtration
\begin{equation}\label{filt-onCU}
C_{K,S}\supseteq\prod_{w\in S_0}\mathcal{O}^{\times}_{K_w}\supseteq\prod_{w\in S_0}{U}_w^{1}\supseteq\prod_{w\in S_0}{U}_w^{2}\supseteq...
\end{equation}
Indeed the sequence $\{\Omega^n:=\prod_{w\in S_0}{U}_w^{n},\mbox{ for $n\geq0$}\}$ is cofinal in the system of open $\Omega\subseteq\prod_{w\in S_0}\mathcal{O}^{\times}_{K_w}$. Hence the pro-group $C_{\bar{U}}$ can be defined as follows:
$$C_{\bar{U}}:=\{C_{K,S}/\Omega^n,\mbox{ for $n\geq0$}\}.$$

\begin{prop}
For any map $\bar{V}\rightarrow\bar{U}$ of connected \'etale $\bar{X}$-schemes, the map $N:C_{\bar{V}}\rightarrow C_{\bar{U}}$, induced by the usual norm map, is compatible with the pro-group structures of $C_{\bar{V}}$ and $C_{\bar{U}}$.

For any Galois \'etale cover $\bar{V}\rightarrow\bar{U}$, the usual Galois action of $Gal(\bar{V}/\bar{U})$ on $C_{\bar{V}}$ is compatible with the pro-group structure of $C_{\bar{V}}$.

For any finite \'etale map $\bar{V}\rightarrow\bar{U}$, the natural morphism $C_{\bar{U}}\rightarrow C_{\bar{V}}$ is compatible with the pro-group structures of $C_{\bar{V}}$ and $C_{\bar{U}}$.

For any connected \'etale $\bar{X}$-schemes $\bar{U}$, the reciprocity morphism
$$r_{\bar{U}}:C_{\bar{U}}\longrightarrow \pi_1(\bar{U}_{et})^{ab}$$
is a  morphism of topological pro-groups.
\end{prop}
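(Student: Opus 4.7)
The plan is to verify each of the four claims by checking that the map in question respects the cofinal filtration $\Omega^n_{\bar{U}}:=\prod_{w\in S_0}U^n_w$ defining the pro-group structure on $C_{\bar{U}}=C_{K,S}$. A morphism of topological pro-groups amounts to: for each level $n$ on the target, exhibiting an $m\geq n$ such that the given map sends $\Omega^m$ on the source into $\Omega^n$ on the target. In each case below I will exploit the fact that the compact subgroup $\prod_{w\in S_0}\mathcal{O}^{\times}_{K_w}\subseteq C_{K,S}$ admits $\{\Omega^m\}_{m\geq0}$ as a fundamental system of open neighbourhoods of the identity, so any continuous homomorphism whose restriction to this compact subgroup lands in the analogous compact subgroup on the target side is automatically compatible with the pro-structures.

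For part 1, the id\`elic norm $N_{L/K}$ restricts on $\prod_{w\in T_0}\mathcal{O}^{\times}_{L_w}$ to the finite product of local norms $N_{L_w/K_v}$, each of which is continuous and sends $U^m_w$ into $U^n_v$ for $m$ sufficiently large depending on $n$ and on the local extension (see \cite{Neukirch} V.1). Finiteness of $T_0$ yields a uniform $m$, hence $N(\Omega^m_{\bar{V}})\subseteq\Omega^n_{\bar{U}}$. For part 2, each $\sigma\in Gal(\bar{V}/\bar{U})$ permutes the places of $L$ above a given place of $K$ and satisfies $\sigma(U^n_w)=U^n_{\sigma(w)}$, so $\sigma$ preserves $\Omega^n_{\bar{V}}$ setwise and the Galois action is level-preserving. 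For part 3, the inclusion $C_{\bar{U}}\hookrightarrow C_{\bar{V}}$ is induced by the diagonal $K^{\times}_v\hookrightarrow\prod_{w\vert v}L^{\times}_w$; since $\bar{V}\to\bar{U}$ is \'etale, each local extension $L_w/K_v$ at $v\in S_0$ is unramified, so a uniformizer of $K_v$ remains a uniformizer of $L_w$ and the diagonal embeds $U^n_v$ into $\prod_{w\vert v}U^n_w$. Consequently $\Omega^n_{\bar{U}}$ is sent into $\Omega^n_{\bar{V}}$, with level preserved exactly.

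Part 4 will be the main obstacle, though a soft argument should suffice. The pro-group structure on $\pi_1(\bar{U}_{et})^{ab}=\underleftarrow{lim}\,Gal(\bar{V}/\bar{U})^{ab}$ is indexed by finite abelian \'etale covers, and global class field theory identifies the corresponding finite discrete quotient of $C_{\bar{U}}$ with $C_{\bar{U}}/N(C_{\bar{V}})$. Since $Gal(\bar{V}/\bar{U})^{ab}$ is finite and discrete, the kernel of the reciprocity map at that level is an open subgroup of $C_{K,S}$, so its intersection with the compact subgroup $\prod_{w\in S_0}\mathcal{O}^{\times}_{K_w}$ is open there; by cofinality of (\ref{filt-onCU}) this intersection contains $\Omega^m_{\bar{U}}$ for $m$ large enough, and concretely $m$ can be taken to be the largest conductor exponent of the local extensions $L_w/K_v$ at the places $v\in S_0$. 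The only mild subtlety concerns the archimedean component: since $\Omega^m$ involves only the finite places in $S_0$, the archimedean part of $C_{\bar{U}}$ already sits inside every level of the pro-system and so imposes no additional obstruction. This will yield the compatibility required for $r_{\bar{U}}$ to be a pro-group morphism.
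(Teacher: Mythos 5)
Your overall strategy matches the paper's: check that each map respects the cofinal filtration $\Omega^n=\prod_{w\in S_0}U^n_w$. Parts 1 and 2 are correct and the paper dispatches them the same way (the word ``clear'' in the paper covers exactly your observations). For part 4 you take a softer route than the paper: you argue that the kernel of the composite $C_{\bar{U}}\rightarrow G_\beta$ to any finite discrete quotient is open, hence cuts out an open subgroup of the compact group $\prod_{w\in S_0}\mathcal{O}^{\times}_{K_w}$, which must contain some $\Omega^m$ by cofinality. This works and is a perfectly good soft argument. The paper instead invokes the precise local statement that the reciprocity map carries $U_v^n$ onto the $n$-th upper-numbering ramification subgroup $(G_v^n)^{ab}$, which exhibits an explicit correspondence between the two filtrations rather than merely a containment at each level; the payoff of the paper's version is that it is constructive and gives the conductor bound you only allude to, while yours is shorter and avoids invoking the ramification filtration.

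There is, however, a genuine error in your treatment of part 3. You assert that ``since $\bar{V}\to\bar{U}$ is \'etale, each local extension $L_w/K_v$ at $v\in S_0$ is unramified.'' This is false: \'etaleness of $\bar{V}\to\bar{U}$ constrains ramification only at the places corresponding to points \emph{of} $\bar{U}$, and by definition the places in $S_0$ are precisely those \emph{not} in $\bar{U}$. At such $v$ the extension $L_w/K_v$ may be arbitrarily ramified, and a uniformizer of $K_v$ need not remain one of $L_w$. Fortunately your conclusion survives: if $e=e(w/v)$ is the ramification index, the inclusion $K_v^{\times}\hookrightarrow L_w^{\times}$ sends $U_v^n=1+\mathfrak{p}_v^n$ into $1+\mathfrak{p}_w^{en}=U_w^{en}\subseteq U_w^n$, so the diagonal still carries $\Omega^n_{\bar{U}}$ into $\Omega^n_{\bar{V}}$ (indeed deeper). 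You should replace the unramifiedness claim by this elementary computation; as written the justification for part 3 rests on a false premise even though the statement is true.
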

\begin{proof}
Concerning the first three statements, we just have to remark that those morphisms are all compatible with the filtration (\ref{filt-onCU}), which is clear. The reciprocity morphism $r_{\bar{U}}$ is defined by the topological class formation
$(\pi_1(\bar{U}_{et},q_{\bar{U}}),\underrightarrow{lim}\,C_{\bar{V}})$,
where $\bar{V}$ runs over the filtered system of pointed étale cover of $(\bar{U},q_{\bar{U}})$
(see \cite{Neukirch} Proposition 8.3.8 and \cite{Neukirch} Theorem 8.3.12). Recall that the group $U_v^n$ is mapped, by class field theory, onto the $n^{th}$-ramification subgroup $$(G_v^n)^{ab}\subset G_{K_v}^{ab}\subset G^{ab}_{K,S}=\pi_1(\bar{U}_{et},q_{\bar{U}})^{ab}.$$ Hence $r_{\bar{U}}$ is a morphism of topological pro-groups.
\end{proof}

\subsubsection{The morphism $\varphi_{\bar{U}}$ has dense image.}\label{subsubsect-transfer}
By Axiom (1), the map $\gamma:\bar{X}_{L}\rightarrow\bar{X}_{et}$  is connected, i.e. $\gamma^*$ is fully faithful. It follows immediately that the morphism $$\gamma_{\bar{U}}:\bar{U}_{L}:=\bar{X}_{L}/\gamma^*\bar{U}\longrightarrow\bar{X}_{et}/U=\bar{U}_{et}$$ is connected as well. Chose a $\mathcal{T}$-point $p_{\bar{U}}$ of $\bar{U}_{L}$ and let $q_{\bar{U}}$ be the geometric point of $\bar{U}$ defined by $p_{\bar{U}}$ as in section \ref{subsect-pi1&un-CFT}. We have a commutative square
\[ \xymatrix{
\bar{U}_{L}\ar[r]^{\gamma_{\bar{U}}}\ar[d]& \bar{U}_{et}\ar[d]\\
B_{\pi_1(\bar{U}_{L},p_{\bar{U}})}\ar[r]^{B_{\varphi_{\bar{U}}}}& B^{sm}_{\pi_1(\bar{U}_{et},q_{\bar{U}})}
}\]
where the vertical maps are both connected. Indeed the inverse image of the morphism $\bar{U}_{L}\rightarrow B_{\pi_1(\bar{U}_{L},p_{\bar{U}})}$ (respectively of the morphism $\bar{U}_{et}\rightarrow B^{sm}_{\pi_1(\bar{U}_{et},q_{\bar{U}})}$) is the inclusion of the full subcategory of sums of locally constant objects $SLC_{\mathcal{T}}(\bar{U}_{L})\hookrightarrow\bar{U}_{L}$ (respectively $SLC(\bar{U}_{et})\hookrightarrow\bar{U}_{et}$). Hence the previous diagram shows that $$B_{\varphi_{\bar{U}}}:B_{\pi_1(\bar{U}_{L},p_{\bar{U}})}\longrightarrow B^{sm}_{\pi_1(\bar{U}_{et},q_{\bar{U}})}$$ is connected as well. This morphism is induced by the morphism of strict topological pro-groups:
$$\varphi_{\bar{U}}:\pi_1(\bar{U}_{L},p_{\bar{U}})\longrightarrow\pi_1(\bar{U}_{et},q_{\bar{U}})$$

Consider $\pi_1(\bar{U}_{L},p_{\bar{U}})$ as a projective system of locally compact groups $(W_{\alpha})_{\alpha\in A}$ and $\pi_1(\bar{U}_{et},q_{\bar{U}})$ as a projective system of finite groups $(G_{\beta})_{\beta\in B}$. Then $\varphi_{\bar{U}}$ is given by a family, indexed over $B$, of compatible morphisms $W_{\alpha}\rightarrow G_{\beta}$. More precisely, one has
$$\varphi_{\bar{U}}\in Hom((W_{\alpha})_{\alpha\in A},(G_{\beta})_{\beta\in B}):=\underleftarrow{lim}_{_{\beta\in B}}\, \underrightarrow{lim}_{_{\alpha\in A}}\, Hom_c(W_{\alpha},G_{\beta}).$$

\begin{defn}
We say that $\varphi_{\bar{U}}$ \emph{has dense image} if all those maps $W_{\alpha}\rightarrow G_{\beta}$ are surjective.
\end{defn}
The fact that the morphism $B_{\varphi_{\bar{U}}}$ is connected implies that $\varphi_{\bar{U}}$ has dense image in that sense. Indeed, assume that one of the maps $W_{\alpha}\rightarrow G_{\beta}$ is not surjective. Then the functor $\varphi^*:B^{sm}_{G_{\beta}}\rightarrow B_{W_{\alpha}}$, sending a $G_{\beta}$-set $E$ to the (sheaf represented by) the discrete $W_\alpha$-space $E$ on which $W_\alpha$ acts via $W_{\alpha}\rightarrow G_{\beta}$, is not fully faithful. But we have the commutative diagram of categories
\[ \xymatrix{
B_{\pi_1(\bar{U}_{L},p_{\bar{U}})}& B^{sm}_{\pi_1(\bar{U}_{et},q_{\bar{U}})}\ar[l]_{B^*_{\varphi_{\bar{U}}}}\\
B_{W_{\alpha}}\ar[u]& B^{sm}_{G_{\beta}}\ar[u]\ar[l]_{\varphi^*}
}\]
where the vertical arrows are fully faithful functors. Hence the fact that $\varphi^*$ is not fully faithful implies that $B^*_{\varphi_{\bar{U}}}$ is not fully faithful. We have obtained the following result.

\begin{prop}
Let $\bar{X}_{L}$ be a topos satisfying Axioms $(1)-(9)$. Then for any connected \'etale $\bar{X}$-scheme $\bar{U}$ the morphism of topological pro-groups $\varphi_{\bar{U}}$ has dense image.
\end{prop}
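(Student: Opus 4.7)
The plan is to follow the logic sketched in the paragraphs immediately preceding the proposition, organizing it into a clean argument. The starting point is Axiom (1): the morphism $\gamma:\bar{X}_L\to\bar{X}_{et}$ is connected, which means $\gamma^*$ is fully faithful. First I would show that the localized morphism $\gamma_{\bar{U}}:\bar{U}_L\to\bar{U}_{et}$ inherits connectedness, since its inverse image is obtained from $\gamma^*$ by base change along the functor to the slice, and a fully faithful left-exact functor between topoi remains fully faithful after slicing over an object.

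Next I would assemble the commutative square
\[ \xymatrix{
\bar{U}_{L}\ar[r]^{\gamma_{\bar{U}}}\ar[d]& \bar{U}_{et}\ar[d]\\
B_{\pi_1(\bar{U}_{L},p_{\bar{U}})}\ar[r]^{B_{\varphi_{\bar{U}}}}& B^{sm}_{\pi_1(\bar{U}_{et},q_{\bar{U}})}
}\]
and observe that both vertical maps are connected because their inverse image functors are the inclusions $SLC_{\mathcal{T}}(\bar{U}_L)\hookrightarrow \bar{U}_L$ and $SLC(\bar{U}_{et})\hookrightarrow \bar{U}_{et}$, each of which is fully faithful by construction. A short diagram chase then forces $B_{\varphi_{\bar{U}}}$ itself to be connected: if $B^{*}_{\varphi_{\bar{U}}}$ fails to be fully faithful, then composing with the fully faithful right-hand inverse image produces a non-fully faithful functor which factors through the fully faithful left-hand inverse image and the fully faithful $\gamma^{*}_{\bar{U}}$, a contradiction.

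The final step, which is the main obstacle, is to convert the connectedness of $B_{\varphi_{\bar{U}}}$ into surjectivity of each component map $W_{\alpha}\to G_{\beta}$ of the pro-group morphism. I would argue by contraposition. Suppose some $W_{\alpha}\to G_{\beta}$ is not surjective, with image $H\subsetneq G_{\beta}$. Then the left $G_{\beta}$-set $G_{\beta}/H$ is not a trivial $W_\alpha$-set, but its pullback to $B_{W_\alpha}$ has more $W_\alpha$-equivariant maps from the terminal object than $G_\beta/H$ does (namely the coset fixed by the action of $H$); equivalently, $\varphi^{*}:B^{sm}_{G_{\beta}}\to B_{W_{\alpha}}$ is not fully faithful. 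Assembling the compatible diagram
\[ \xymatrix{
B_{\pi_1(\bar{U}_{L},p_{\bar{U}})}& B^{sm}_{\pi_1(\bar{U}_{et},q_{\bar{U}})}\ar[l]_{B^*_{\varphi_{\bar{U}}}}\\
B_{W_{\alpha}}\ar[u]& B^{sm}_{G_{\beta}}\ar[u]\ar[l]_{\varphi^*}
}\]
and using the fact that the vertical functors (restriction along the canonical projections in the pro-group) are fully faithful, I conclude that $B^{*}_{\varphi_{\bar{U}}}$ fails to be fully faithful either, contradicting connectedness of $B_{\varphi_{\bar{U}}}$. This finishes the proof; the delicate point throughout is keeping track of the various adjunctions and the passage between the topos-theoretic notion of connectedness and the combinatorial notion of density for a morphism of strict pro-groups.
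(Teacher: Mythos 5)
Your proof is correct and follows the same route as the paper: show $\gamma_{\bar{U}}$ is connected by slicing, transfer connectedness through the $SLC$-square to conclude $B_{\varphi_{\bar{U}}}$ is connected, then argue by contraposition that a non-surjective component $W_\alpha\to G_\beta$ would make $\varphi^*$, and hence $B^*_{\varphi_{\bar{U}}}$, fail to be fully faithful. You also supply two details the paper leaves tacit — that fully-faithfulness of $\gamma^*$ persists after slicing, and the explicit coset $G_\beta/H$ argument showing $\varphi^*$ is not full — both of which are accurate.
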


Let $\bar{V}\rightarrow\bar{U}$ be a finite Galois \'etale cover of \'etale $\bar{X}$-schemes with $Gal(\bar{V}/\bar{U})=G$, and consider the injective morphism of topological pro-group
$$\pi_1(\bar{V}_{L},p_{\bar{V}})\hookrightarrow \pi_1(\bar{U}_{L},p_{\bar{U}}).$$
In other words, if one sees the fundamental groups of $\bar{V}_{L}$ and of $\bar{U}_{L}$ as projective systems of topological groups $(W'_{\alpha})_{\alpha\in A}$ and $(W_{\alpha})_{\alpha\in A}$ (indexed over the same filtered category $A$), the previous map is given by a family of compatible injective morphisms of topological groups $W'_{\alpha}\rightarrow W_{\alpha}$ . One can consider the quotient pro-object of $\mathcal{T}$
$$\pi_1(\bar{U}_{L},p_{\bar{U}})/\pi_1(\bar{V}_{L},p_{\bar{V}}):=(yW_{\alpha}/yW'_{\alpha})_{\alpha\in A}.$$
Then this projective system is in fact an essentially constant pro-group and one has an isomorphism in $\mathcal{T}$:
$$\pi_1(\bar{U}_{L},p_{\bar{U}})/\pi_1(\bar{V}_{L},p_{\bar{V}})\simeq y(G).$$
More generally for any finite \'etale map $\bar{V}\rightarrow\bar{U}$ of \'etale $\bar{X}$-schemes the pro-object of $\mathcal{T}$
$$\pi_1(\bar{U}_{L},p_{\bar{U}})/\pi_1(\bar{V}_{L},p_{\bar{V}})$$ is essentially constant, endowed with an action of the pro-group object $\pi_1(\bar{U}_{L},p_{\bar{U}})$, and one has an isomorphism of finite $\pi_1(\bar{U}_{L},p_{\bar{U}})$-sets:
$$\pi_1(\bar{U}_{L},p_{\bar{U}})/\pi_1(\bar{V}_{L},p_{\bar{V}})
\simeq\pi_1(\bar{U}_{et},q_{\bar{U}})/\pi_1(\bar{V}_{et},q_{\bar{V}}).$$
Therefore, for any finite \'etale map $\bar{V}\rightarrow\bar{U}$, the induced morphism $$\pi_1(\bar{V}_{L},p_{\bar{V}})\longrightarrow \pi_1(\bar{U}_{L},p_{\bar{U}})$$ is given by a \emph{compatible family of closed topological subgroups of finite index} $W'_{\alpha}\hookrightarrow W_{\alpha}$. Moreover, one can choose an index $\alpha_0\in A$ such that for any map $\alpha\rightarrow\alpha_0$ in $A$, the map
$W_{\alpha}/W'_{\alpha}\rightarrow W_{\alpha_0}/W'_{\alpha_0}$ is a bijective map of finite sets.
It follows that the usual transfer maps
$$\textrm{tr}_{\alpha}:W^{ab}_{\alpha}\longrightarrow W'^{ab}_{\alpha}$$
are well defined and that they make commutative the following square:
\[ \xymatrix{
W'^{ab}_{\alpha}\ar[d]& W^{ab}_{\alpha}\ar[d]\ar[l]_{tr_{\alpha}}\\
W'^{ab}_{\alpha_0}& W^{ab}_{\alpha_0}\ar[l]_{tr_{\alpha_0}}
}\]
We obtain a morphism of locally compact topological pro-groups
$$\textrm{tr}:\pi_1(\bar{U}_{L},p_{\bar{U}})^{ab}\longrightarrow \pi_1(\bar{V}_{L},p_{\bar{V}})^{ab}$$
If $\bar{V}\rightarrow\bar{U}$ is a Galois \'etale cover, then $W'_{\alpha}$ is normal in $W_{\alpha}$ for any $\alpha\in A$, hence
$W_{\alpha}$ acts on $W'^{ab}_{\alpha}$ by conjugation. This action is certainly functorial in $\alpha$ hence $\pi_1(\bar{U}_{L},p_{\bar{U}})$ acts on $\pi_1(\bar{V}_{L},p_{\bar{V}})^{ab}$ by conjugation. More precisely, we consider the topological pro-group
$$
\fonc{\pi_1(\bar{U}_{L},p_{\bar{U}})\times\pi_1(\bar{V}_{L},p_{\bar{V}})^{ab}}{A}{Gr(\mathcal{T})}{\alpha}
{W_{\alpha}\times W'^{ab}_{\alpha}}$$
Then we have a morphism of topological pro-groups :
$$\pi_1(\bar{U}_{L},p_{\bar{U}})\times\pi_1(\bar{V}_{L},p_{\bar{V}})^{ab}
\longrightarrow\pi_1(\bar{V}_{L},p_{\bar{V}})^{ab}.$$
We have shown the following result.
\begin{prop}\label{prop-transfer}
Let $\bar{V}\rightarrow\bar{U}$ be a finite \'etale map of \'etale $\bar{X}$-schemes. We have a morphism of abelian topological pro-groups
$$\emph{\textrm{tr}}:\pi_1(\bar{U}_{L},p_{\bar{U}})^{ab}\longrightarrow \pi_1(\bar{V}_{L},p_{\bar{V}})^{ab}.$$
If $\bar{V}\rightarrow\bar{U}$ is Galois, then $\pi_1(\bar{U}_{L},p_{\bar{U}})$ acts on $\pi_1(\bar{V}_{L},p_{\bar{V}})^{ab}$ by conjugation: $$\pi_1(\bar{U}_{L},p_{\bar{U}})\times\pi_1(\bar{V}_{L},p_{\bar{V}})^{ab}
\longrightarrow\pi_1(\bar{V}_{L},p_{\bar{V}})^{ab}.$$

\end{prop}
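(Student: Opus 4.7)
The proposition is essentially a corollary of the structural description of $\pi_1(\bar{U}_L,p_{\bar{U}})$ and $\pi_1(\bar{V}_L,p_{\bar{V}})$ that has just been assembled; the plan is to organise that description into a clean argument by transporting the classical transfer construction levelwise through the pro-system, and then verifying compatibility.

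First, invoking Axiom (2) together with the hypothesis that the fundamental groups are pro-representable by locally compact strict pro-groups, I would write $\pi_1(\bar{U}_L,p_{\bar{U}})$ as a projective system $(W_\alpha)_{\alpha\in A}$ of locally compact groups and $\pi_1(\bar{V}_L,p_{\bar{V}})$ as $(W'_\alpha)_{\alpha\in A}$ indexed over the same filtered category $A$ (by passing to a cofinal system if necessary, using the finite \'etale map $\bar V\to\bar U$). The first key step is to reduce to the statement that at each level $W'_\alpha\hookrightarrow W_\alpha$ is a closed subgroup of finite index, with the finite quotient pro-object $W_\alpha/W'_\alpha$ essentially constant and equal, as a $\pi_1(\bar{U}_L,p_{\bar{U}})$-set, to the \'etale coset space $\pi_1(\bar{U}_{et},q_{\bar{U}})/\pi_1(\bar{V}_{et},q_{\bar{V}})$. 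This is precisely the content of the computation just before the statement, and it follows from the pull-back description of $\bar{V}_L$ as the slice $\bar{X}_L/\gamma^*\bar{V}$, the connectedness of the morphism to the classifying topoi, and the density result already established.

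Next, I would use this finite-index description to pick a cofinal index $\alpha_0\in A$ and restrict to the subcategory of arrows $\alpha\to\alpha_0$, so that for every such $\alpha$ the canonical map $W_\alpha/W'_\alpha\to W_{\alpha_0}/W'_{\alpha_0}$ is a bijection of finite sets. On this cofinal system, for each $\alpha$ the classical group-theoretic transfer
\[
\mathrm{tr}_\alpha: W^{ab}_\alpha\longrightarrow W'^{ab}_\alpha
\]
is defined (one picks coset representatives and writes the standard formula). The compatibility of $(\mathrm{tr}_\alpha)$ with the transition maps in $A$ is then a routine consequence of the bijectivity of the coset maps: any two choices of coset representatives that are matched by the transition map give the same transfer in the target, so the square
\[
\xymatrix{W'^{ab}_\alpha & W^{ab}_\alpha\ar[l]_{\mathrm{tr}_\alpha}\\ W'^{ab}_{\alpha_0}\ar[u] & W^{ab}_{\alpha_0}\ar[u]\ar[l]_{\mathrm{tr}_{\alpha_0}}}
\]
commutes. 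Taking the morphism of pro-objects in $Gr(\mathcal T)$ produces the required morphism $\mathrm{tr}$ of abelian topological pro-groups.

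For the Galois case $\bar V\to\bar U$ with group $G$, the subgroups $W'_\alpha\subseteq W_\alpha$ are normal (by the identification of $W_\alpha/W'_\alpha$ with a quotient of $G$ and the fact that $G$ itself appears essentially constantly), so conjugation $W_\alpha\times W'^{ab}_\alpha\to W'^{ab}_\alpha$ is well defined at each level; its functoriality in $\alpha$ is again automatic because the transition maps are group homomorphisms compatible with the inclusions $W'_\alpha\hookrightarrow W_\alpha$. The main obstacle in writing this up cleanly is bookkeeping: ensuring that a single filtered index category $A$ works simultaneously for $\bar U$ and $\bar V$ and that the cofinal subsystem on which the coset maps are bijections is large enough for all naturality statements needed downstream. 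Once that is in place, everything else is formal from the classical theory of the transfer and the limit functor $Pro(Gr(\mathcal T))\to Pro(Ab(\mathcal T))$ induced by $(-)^{ab}$.
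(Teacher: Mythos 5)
Your argument is correct and follows essentially the same route as the paper: both write $\pi_1(\bar{V}_L,p_{\bar{V}})\hookrightarrow\pi_1(\bar{U}_L,p_{\bar{U}})$ as a compatible family of closed finite-index inclusions $W'_\alpha\hookrightarrow W_\alpha$ with essentially constant coset quotient identified with $\pi_1(\bar{U}_{et})/\pi_1(\bar{V}_{et})$, pick a cofinal $\alpha_0$ on which the coset maps are bijective, apply the classical transfer levelwise, and invoke normality for the Galois conjugation action. The only cosmetic difference is that you phrase the passage to $\textrm{tr}$ via the functor $Pro(Gr(\mathcal T))\to Pro(Ab(\mathcal T))$, whereas the paper simply assembles the $\textrm{tr}_\alpha$ directly into a morphism of pro-groups.
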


\subsection{The Weil-étale topos in charateristic $p$} Let $Y$
be a smooth projective curve over a finite field $k$. Assume that $Y$ is geometrically connected.
The Weil-étale topos $Y_W$ is defined as the category of $W_k$-equivariant étale sheaves on the geometric curve
$Y\times_k\bar{k}$. Then one can prove that $Y_W$ satisfies all the
properties (1)-(9) above (replacing $\mathcal{T}$ with
$\underline{Sets}$). Moreover $Y_W$ is universal for these
properties, i.e. it is the smallest topos satisfying those
properties. In other words, if $\mathcal{S}$ is a topos satisfying
those properties (1)-(9), then there exists a essentially unique
morphism $\mathcal{S}\rightarrow Y_W$ compatible with this structure
(i.e. making all the diagrams commutative).

We give below a sketch of the proof of these facts. By
(\cite{these} Theorem 8.5) we have a canonical equivalence
\begin{equation}\label{fiber-product-in-Fund}
Y_W\simeq Y_{et}\times_{B_{G_k}}B_{W_k}
\end{equation}
where $Y_{et}$ denotes the étale topos of $Y$, i.e. the category of
sheaves of sets on the étale site of $Y$. Consider the first
projection
$$\gamma:Y_W\simeq Y_{et}\times_{B_{G_k}}B_{W_k}\longrightarrow
Y_{et}.$$ For any étale $Y$-scheme $U$, we thus have
\begin{equation}\label{local-char-p}
Y_W/\gamma^*yU\simeq
(Y_{et}/yU)\times_{B_{G_k}}B_{W_k}\simeq U_{et}\times_{B_{G_k}}B_{W_k}\simeq U_W.
\end{equation}
If $U$ is connected étale over $Y$, then $U_{et}$ and $U_{W}$ are both connected and
locally connected over $\underline{Sets}$. Any geometric point $p_U$
of $U$ yields a $\underline{Sets}$-valued point of $U_{et}$ and
of $U_W$, and we have an isomorphism of pro-discrete groups
\begin{equation}\label{fund-grp-char-p}
\pi_1(U_W,p)\simeq \pi_1(U_{et},p)\times_{G_k}W_k.
\end{equation}
The group $\pi_1(U_{et},p)\times_{G_k}W_k$ is often called the Weil group
of $U$. For any closed point $y$ of $Y$ we have a closed embedding
$$B_{G_{k(y)}}\simeq(Spec\,k(y))_{et}\longrightarrow Y_{et}.$$
The inverse image of this closed
subtopos under $\gamma$ is given by the fiber product:
\begin{equation}\label{closedpoint-charp}
Y_W\times_{Y_{et}}B_{G_{k(y)}}\simeq
B_{W_{k}}\times_{B_{G_{k}}}Y_{et}\times_{Y_{et}}B_{G_{k(y)}}\simeq
B_{W_{k}}\times_{B_{G_{k}}}B_{G_{k(y)}}\simeq B_{W_{k(y)}}.
\end{equation}
Then properties (1)-(9) and the fact that $Y_W$ is universal for
those properties follow from (\ref{fiber-product-in-Fund}),
(\ref{local-char-p}), (\ref{fund-grp-char-p}),
(\ref{closedpoint-charp}) and class field theory for function
fields. Note that Weil's interpretation of class field theory for function fields can be restated as follows:
the reciprocity morphism gives an isomorphism between the S-idele class group
and the abelian Weil-\'etale fundamental group. Finally, note that the canonical morphism
$$Y_W\longrightarrow B_{W_k}$$ gives rise to the Frobenius-equivariant
l-adic cohomology (see \cite{these} Chapter 8 section 4.3).

\subsection{The Lichtenbaum topos and Deninger's dynamical system}\label{subsect-Deninger's-DS}

Axiom (3) of section \ref{subsect-expected-properties} yields a canonical morphism flow
$$\mathfrak{f}:\bar{X}_{L}\longrightarrow B_{Pic(\bar{X})}\longrightarrow B_{\mathbb{R}}.$$
A topos is Grothendieck's generalization of a space hence $\bar{X}_{L}$ can be seen as a generalized space. Then the morphism of topoi $\mathfrak{f}$ can be interpreted as follows. The topos $\bar{X}_{L}$ is a generalized space endowed with an action of the topological group $\mathbb{R}$.

Axioms (7) and (8) above give a closed embedding
$i_{v}:B_{W_{k(v)}}\rightarrow\bar{X}_{L}$ such that the composition
$$i_{v}:B_{W_{k(v)}}\longrightarrow\bar{X}_{L}\longrightarrow B_{\mathbb{R}}$$
is the morphism induced by the canonical morphism
$l_v:W_{k(v)}\rightarrow\mathbb{R}$. For an ultrametric place $v$ of
$F$, the morphism $l_v$ sends the canonical generator of $W_{k(v)}$
to $-log\,N(v)$. The closed embedding $i_v$ should be thought of as
a \emph{closed orbit of the flow of length $log\,N(v)$}. For an archimedean place $v$, the composite morphism $\mathfrak{f}\circ i_v:B_{W_{k(v)}}\rightarrow
B_{\mathbb{R}}$ is an isomorphism of topoi, and $i_v$ should be thought of as
a \emph{(closed) inclusion of a fixed point of the flow}. Thus the
morphism $\mathfrak{f}$ encodes all the numbers $log\,N(v)$. This suggests that the morphism $\mathfrak{f}$, or more
precisely the derived functor $R\mathfrak{f}_*$, could yield a
geometric cohomology theory (i.e. a cohomology allowing a
cohomological interpretation of the zeta function itself). In other
words, one can dream that the conjectural Lichtenbaum topos of $\bar{X}$ (if
it exists), will play the role of Deninger's dynamical system (see \cite{Deninger} for example). In any case, the correct conjectural Lichtenbaum topos is far from being constructed. We refer to (\cite{these} Chapter 9) for some details concerning the analogy between the conjectural Lichtenbaum topos and Deninger's dynamical system.

\subsection{The base topos $B_\mathbb{R}$ and the field with one element $\mathbb{F}_1$}

Let $Y$ a smooth scheme of finite type over $\mathbb{F}_q$. Assume for simplicity that $Y$ is geometrically connected. The Weil-étale topos $Y_W$ is given with a canonical morphism
$$f_Y:Y_W\longrightarrow B^{sm}_{W_{\mathbb{F}_q}}$$
over the small classifying topos $B^{sm}_{W_{\mathbb{F}_q}}$. The Weil-étale topos of $Y$ is thought of as a space endowed with an action of the group  $W_{\mathbb{F}_q}$. Indeed, $Y_W$ is the \'etale topos associated to the Frobenius-equivariant geometric scheme  $\overline{Y}:=Y\otimes_{\mathbb{F}_q}\overline{\mathbb{F}_q}$.
The \'etale topos $\overline{Y}_{et}$ of the geometric scheme  $\overline{Y}$ is obtained as the localization
$$\overline{Y}_{et}=Y_W\times_{B^{sm}_{W_{\mathbb{F}_q}}}\underline{Set}=Y_W/f_Y^*EW_{\mathbb{F}_q}$$
where $\underline{Set}\rightarrow B^{sm}_{W_{\mathbb{F}_q}}$ is the canonical point of $B^{sm}_{W_{\mathbb{F}_q}}$
We denote the \emph{geometric topos} by
$$Y_{\mathrm{geo}}:=Y_W/f_Y^*EW_{\mathbb{F}_q}.$$
We have an exact sequence of fundamental groups
\begin{equation}\label{exact-sequence-fundgrps}
1\rightarrow\pi_1(Y_{\mathrm{geo}},p)\rightarrow\pi_1(Y_W,p)\rightarrow W_{\mathbb{F}_q}\rightarrow1.
\end{equation}

Over $Spec(\mathbb{Z})$, the role of $B_{W_{\mathbb{F}_q}}$ is played by $B_{\mathbb{R}^{\times}_{+}}=B_{\mathbb{R}}$. Let $\bar{X}$ be the compactification of $Spec(\mathcal{O}_F)$. One has a canonical morphism
$$\mathfrak{f}:\bar{X}_{L}\longrightarrow B_{Pic(\bar{X})}\longrightarrow B_{\mathbb{R}^{\times}_{+}}.$$
One can imagine that the base topos $B_{\mathbb{R}^{\times}_{+}}$ is the classifying topos of the Weil group $W_{\mathbb{F}_1}=\mathbb{R}^{\times}_{+}$ of some arithmetic object $\mathbb{F}_1$. Then the localized topos
$$\bar{X}_{\mathrm{geo}}:=\bar{X}_{L}\times_{B_{W_{\mathbb{F}_1}}}\mathcal{T}=
\bar{X}_{L}/\mathfrak{f}^*EW_{\mathbb{F}_1}$$
where $\mathcal{T}\rightarrow B_{\mathbb{R}}$ is the canonical point of $B_{\mathbb{R}}$, would play the role of the geometric \'etale topos $Y_{\mathrm{geo}}:=\overline{Y}_{et}$. Intuitively, $\bar{X}_{\mathrm{geo}}$ corresponds to Deninger's space without the $\mathbb{R}$-action. We have an exact sequence of fundamental groups
$$1\rightarrow\pi_1(\bar{X}_{\mathrm{geo}},p)\rightarrow\pi_1(\bar{X}_{L},p)\rightarrow W_{\mathbb{F}_1}\rightarrow1.$$
This exact sequence is analogous to (\ref{exact-sequence-fundgrps}).

\section{Cohomology}\label{Sect-Cohomology}

In this section we consider the curve $\bar{X}=\overline{Spec(\mathcal{O}_F)}$ where the number field $F$ is totally imaginary. Let $\gamma:\bar{X}_{L}\rightarrow\bar{X}_{et}$ be any topos satisfying Axioms (1)-(9) of the description given in section \ref{subsect-expected-properties}. We show that these axioms yield a natural proof of the fact that the complex of \'etale sheaves $\tau_{\leq2}R\gamma_*(\varphi_!\mathbb{Z})$ produces the special value of $\zeta_F(s)$ at $s=0$ up to sign.

\subsection{The base change from the Weil-étale cohomology to the étale cohomology}

Recall that we denote by $C_{\bar{U}}=C_{K,S}$ the $S$-idele class group canonically associated to $\bar{U}$.
We consider the sheaves on $\bar{U}_{L}$ defined by  $\widetilde{\mathbb{R}}:=t^*_{\bar{U}}(y\mathbb{R})$ and $\widetilde{\mathbb{S}}^1:=t^*_{\bar{U}}(y\mathbb{S}^1)$, where $y\mathbb{S}^1$ and $y\mathbb{R}$ are the sheaves on $\mathcal{T}$ represented by the topological groups $\mathbb{S}^1$ and $\mathbb{R}$, and $t_{\bar{U}}:\bar{U}_{L}\rightarrow\mathcal{T}$ is the canonical map (defined for the second Axiom).

\begin{prop}
For any connected \'etale $\bar{X}$-scheme $\bar{U}$, we have
\begin{eqnarray*}
H^n(\bar{U}_{L},\widetilde{\mathbb{R}})
&=&\mathbb{R}\mbox{ for $n=0$}\\
&=&Hom_{c}(C_{\bar{U}},\mathbb{R}) \mbox{ for $n=1$}\\
&=&0 \mbox{ for $n\geq2$}.
\end{eqnarray*}
\end{prop}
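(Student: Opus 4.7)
I would prove the three degree ranges separately. The vanishing $H^n=0$ for $n\geq 2$ is nothing but Axiom (9). The degree-zero computation will follow from the connectedness of $t_{\bar U}$. The substantive case is $n=1$: I would identify $H^1(\bar U_L,\widetilde{\mathbb R})$ with continuous characters of the fundamental pro-group via a pro-group version of Proposition \ref{prop-pi1-represents}, then collapse the target to the abelian Hausdorff quotient since $\mathbb R$ is abelian Hausdorff, and finally invoke the class-field-theoretic identification $\pi_1(\bar U_L)^{ab}\simeq C_{\bar U}$ given by Axiom (3).

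\textbf{The easy degrees.} For $n\geq 2$ there is nothing to do: Axiom (9) is precisely the assertion $H^n(\bar U_L,\widetilde{\mathbb R})=0$. For $n=0$, Axiom (2) says $t_{\bar U}:\bar U_L\to\mathcal T$ is connected, so $t_{\bar U}^*$ is fully faithful; hence $(t_{\bar U})_*\widetilde{\mathbb R}=(t_{\bar U})_*t_{\bar U}^*(y\mathbb R)=y\mathbb R$. Pushing down to $\underline{Sets}$ via $e_{\mathcal T,*}$, which is exact and therefore commutes with the direct-image functor $(t_{\bar U})_*$, yields $H^0(\bar U_L,\widetilde{\mathbb R})=\Gamma(\ast,y\mathbb R)=\mathbb R$ as claimed.

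\textbf{Degree one.} Axiom (2) further provides a $\mathcal T$-point $p_{\bar U}$ and a pro-representability of $\pi_1(\bar U_L,p_{\bar U})$ by a locally compact strict pro-group $\underline G=(G_i)_{i\in I}$, together with an equivalence $SLC_{\mathcal T}(\bar U_L)\simeq B_{\underline G}$. Because $\widetilde{\mathbb R}=t_{\bar U}^*y\mathbb R$ is pulled back from $\mathcal T$, every $\widetilde{\mathbb R}$-torsor is locally constant over $\mathcal T$, so the universal-cover morphism $\bar U_L\to B_{\underline G}$ induces an isomorphism
\[
H^1_{\mathcal T}(\bar U_L,\widetilde{\mathbb R})\;\simeq\; H^1_{\mathcal T}(B_{\underline G},y\mathbb R)\;\simeq\;\underrightarrow{lim}_{i}\,\underline{Hom}_{\mathcal T}(G_i,y\mathbb R).
\]
Since $y\mathbb R$ is abelian and Hausdorff, each continuous morphism $G_i\to y\mathbb R$ factors through the maximal Hausdorff abelian quotient $G_i^{ab}$ (Corollary \ref{DD=abelian}), so the filtered colimit is $\underline{Hom}_{\mathcal T}(\pi_1(\bar U_L,p_{\bar U})^{ab},y\mathbb R)$. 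Axiom (3) identifies $\pi_1(\bar U_L)^{ab}$ with $C_{\bar U}$ as topological pro-groups, and taking global sections (again using strong acyclicity of $\mathcal T\to\underline{Sets}$ together with the full faithfulness of the Yoneda embedding $y:Top\to\mathcal T$) produces $H^1(\bar U_L,\widetilde{\mathbb R})=Hom_{c}(C_{\bar U},\mathbb R)$.

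\textbf{The main obstacle.} The one genuinely non-formal step is the first isomorphism in the displayed line. Proposition \ref{prop-pi1-represents} as stated in the excerpt requires local simple connectedness, whereas Axiom (2) only gives pro-representability of $\pi_1$. One must therefore import the Bunge--Moerdijk / Moerdijk pro-discrete-localic formalism in the pro-group setting and check that, for $B_{\underline G}=\underleftarrow{lim}_i B_{G_i}$, the functor $H^1_{\mathcal T}(-,y\mathbb R)$ commutes with the cofiltered limit in the indicated sense; equivalently, that any $y\mathbb R$-torsor on $\bar U_L$ is trivialised by a covering coming from $SLC_{\mathcal T}(\bar U_L)$ and hence from some $B_{G_i}$. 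Once that book-keeping is in place, the rest of the argument is purely formal, resting on Axioms (2), (3), (9) and Corollary \ref{DD=abelian}.
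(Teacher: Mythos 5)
Your proposal is correct and takes essentially the same route as the paper's proof: $n\geq 2$ from Axiom (9), $n=0$ from connectedness of $t_{\bar U}$ together with strong acyclicity of $\mathcal T\to\underline{Sets}$, and $n=1$ by identifying $H^1$ with continuous characters of $\pi_1(\bar U_L,p_{\bar U})$ (taking the colimit over the pro-group system), collapsing to the Hausdorff abelianization, and invoking Axiom~(3). The only difference is cosmetic: you explicitly flag that Proposition~\ref{prop-pi1-represents} needs a pro-group extension because Axiom~(2) supplies only pro-representability, a technical point the paper applies tacitly by writing $Hom_c(\pi_1(\bar U_L),\mathbb R):=\underrightarrow{lim}\,Hom_c(\pi_1(\bar U_L),\mathbb R)$ without further comment.
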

\begin{proof}
The result for $n=0$ follows from the connectedness of $\bar{U}_{L}\rightarrow\mathcal{T}$ given by Axiom (2). Indeed, one has
$$H^0(\bar{U}_W,t^*\widetilde{\mathbb{R}}):=(e_{\mathcal{T}*}\circ t_*)\,t^*\widetilde{\mathbb{R}}=e_{\mathcal{T}*}\widetilde{\mathbb{R}}=\mathbb{R}.$$
where $e_{\mathcal{T}}$ denotes the unique map
$e_{\mathcal{T}}:\mathcal{T}\rightarrow\underline{Sets}$. By Axiom (3), the result for $n=1$ follows from
\begin{eqnarray*}
H^1(\bar{U}_{L},\widetilde{\mathbb{R}})&=&Hom_{c}(\pi_1(\bar{U}_{L}),\mathbb{R})\\
&:=&\underrightarrow{lim}\,Hom_{c}(\pi_1(\bar{U}_{L}),\mathbb{R})\\
&=&\underrightarrow{lim}\,Hom_{c}(\pi_1(\bar{U}_{L})^{ab},\mathbb{R})\\
&=&Hom_{c}(C_{\bar{U}},\mathbb{R}).
\end{eqnarray*}
The result for $n\geq2$ is given by Axiom (9).
\end{proof}

The maximal compact subgroup of $C_{\bar{U}}$, i.e. the kernel of the absolute value map $C_{\bar{U}}\rightarrow\mathbb{R}_{>0}$, is denoted by $C^1_{\bar{U}}$. Thus we have an exact sequence of topological groups
$$1\rightarrow C^1_{\bar{U}}\rightarrow C_{\bar{U}}\rightarrow\mathbb{R}_{>0}\rightarrow 1.$$
The Pontraygin dual $(C^1_{\bar{U}})^D$ is discrete since $C^1_{\bar{U}}$ is compact.
\begin{prop}
For any connected \'etale $\bar{X}$-scheme $\bar{U}$, we have canonically
\begin{eqnarray*}
H^n(\bar{U}_{L},\mathbb{Z})
&=&\mathbb{Z}\mbox{ for $n=0$}\\
&=&0 \mbox{ for $n=1$}\\
&=&(C^1_{\bar{U}})^D \mbox{ for $n=2$}.
\end{eqnarray*}
\end{prop}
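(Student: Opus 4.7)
The plan is to run the long exact sequence in cohomology associated to the short exact sequence of abelian sheaves
\[
0 \to \mathbb{Z} \to \widetilde{\mathbb{R}} \to \widetilde{\mathbb{S}}^1 \to 0
\]
on $\bar{U}_L$, and to identify each term via the preceding proposition together with Corollary~\ref{cor-retrouve-pi1-ab} applied to $\bar{U}_L$. For $H^0$, Axiom~(2) tells us that $t_{\bar{U}} : \bar{U}_L \to \mathcal{T}$ is connected, so $t_{\bar{U} *} t_{\bar{U}}^* \simeq \mathrm{id}$ and $H^0(\bar{U}_L, t_{\bar{U}}^* y A) = A$ for any locally compact abelian group $A$; in particular $H^0(\bar{U}_L, \mathbb{Z}) = \mathbb{Z}$, $H^0(\bar{U}_L, \widetilde{\mathbb{R}}) = \mathbb{R}$ and $H^0(\bar{U}_L, \widetilde{\mathbb{S}}^1) = \mathbb{S}^1$.

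In degree one, Corollary~\ref{cor-retrouve-pi1-ab} combined with the isomorphism $\pi_1(\bar{U}_L,p_{\bar U})^{ab} \simeq C_{\bar{U}}$ supplied by Axiom~(3) gives $H^1(\bar{U}_L, \widetilde{\mathbb{S}}^1) \simeq C_{\bar{U}}^D$. The preceding proposition yields $H^1(\bar{U}_L, \widetilde{\mathbb{R}}) = \mathrm{Hom}_c(C_{\bar{U}}, \mathbb{R})$, and since $C^1_{\bar{U}}$ is compact every continuous homomorphism $C_{\bar{U}} \to \mathbb{R}$ kills $C^1_{\bar{U}}$, hence factors through $C_{\bar{U}}/C^1_{\bar{U}} \simeq \mathbb{R}_{>0} \simeq \mathbb{R}$, giving the identification $H^1(\bar{U}_L, \widetilde{\mathbb{R}}) \simeq \mathbb{R}$. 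The higher terms vanish by Axiom~(9). Inserting these values, the long exact sequence becomes
\[
0 \to \mathbb{Z} \to \mathbb{R} \to \mathbb{S}^1 \to H^1(\bar{U}_L, \mathbb{Z}) \to \mathbb{R} \xrightarrow{\delta} C_{\bar{U}}^D \to H^2(\bar{U}_L, \mathbb{Z}) \to 0.
\]

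The map $\mathbb{R} \to \mathbb{S}^1$ is already surjective, so the connecting map $\mathbb{S}^1 \to H^1(\bar{U}_L, \mathbb{Z})$ vanishes, and $H^1(\bar{U}_L, \mathbb{Z}) = \ker(\delta)$. By naturality of the identifications in Corollary~\ref{cor-retrouve-pi1-ab}, $\delta$ sends $s \in \mathbb{R} = \mathrm{Hom}_c(\mathbb{R}_{>0}, \mathbb{R})$ to the character $c \mapsto e^{2\pi i s \log |c|}$, i.e.\ $\delta$ is the Pontryagin dual of the absolute value $C_{\bar U} \twoheadrightarrow \mathbb{R}_{>0}$ precomposed with $\log : \mathbb{R}_{>0} \simeq \mathbb{R}$. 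Since this surjection is dual to the injection $\mathbb{R}_{>0}^D \hookrightarrow C_{\bar{U}}^D$, the map $\delta$ is injective, whence $H^1(\bar{U}_L, \mathbb{Z}) = 0$. Dualizing the short exact sequence $1 \to C^1_{\bar{U}} \to C_{\bar{U}} \to \mathbb{R}_{>0} \to 1$ yields $0 \to \mathbb{R}_{>0}^D \to C_{\bar{U}}^D \to (C^1_{\bar{U}})^D \to 0$, and identifying $\mathrm{Im}(\delta)$ with $\mathbb{R}_{>0}^D$ gives $H^2(\bar{U}_L, \mathbb{Z}) = \mathrm{coker}(\delta) = (C^1_{\bar{U}})^D$.

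The main obstacle is the explicit identification of $\delta$ as the Pontryagin dual of the absolute value. This requires tracing the isomorphism of Corollary~\ref{cor-retrouve-pi1-ab} functorially through the connecting morphism: the relevant naturality is that the coefficient change $\widetilde{\mathbb{R}} \to \widetilde{\mathbb{S}}^1$ on $H^1$ corresponds, via $\pi_1(\bar U_L)^{ab}\simeq C_{\bar U}$, to the standard exponential $\mathbb{R} \to \mathbb{S}^1$ precomposed with the composition of the absolute value with $\log$, which is exactly the non-canonical splitting of $C_{\bar U}\to \mathbb{R}_{>0}$. Once this compatibility is nailed down (cf.\ the construction of the fundamental class $\theta$), the rest of the argument is purely formal Pontryagin duality.
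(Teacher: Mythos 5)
Your proof is correct and uses essentially the same approach as the paper: identify $H^1$ of the various coefficient sheaves via Axiom (3) and $\pi_1(\bar{U}_L)^{ab}\simeq C_{\bar{U}}$, then run the long exact sequence for $0\to\mathbb{Z}\to\widetilde{\mathbb{R}}\to\widetilde{\mathbb{S}}^1\to0$. The one place you take a slightly longer route is $n=1$: rather than tracking the connecting map $\mathbb{S}^1\to H^1(\bar{U}_L,\mathbb{Z})$ and proving injectivity of the coefficient-change map $\mathrm{Hom}_c(C_{\bar{U}},\mathbb{R})\to C_{\bar{U}}^D$, the paper simply applies Proposition \ref{prop-pi1-represents} with $\mathcal{A}=\mathbb{Z}$ to get $H^1(\bar{U}_L,\mathbb{Z})=\mathrm{Hom}_c(C_{\bar{U}},\mathbb{Z})=0$ immediately (the maximal compact $C^1_{\bar{U}}$ maps trivially to the discrete group $\mathbb{Z}$, and the connected quotient $\mathbb{R}_{>0}$ maps trivially to $\mathbb{Z}$ as well); this sidesteps the explicit identification of $\delta$ that you rightly flag as the delicate point.
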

\begin{proof}
As above, the result for $n=0$ follows from the connectedness of $\bar{U}_{L}\rightarrow\mathcal{T}$ given by Axiom (2). By Axiom (3), the result for $n=1$ follows from $$H^1(\bar{U}_{L},\mathbb{Z})=Hom_{c}(\pi_1(\bar{U}_{L})^{ab},\mathbb{Z})=0.$$
By Axiom (3) we have canonical isomorphisms
\begin{eqnarray*}
H^1(\bar{U}_{L},\widetilde{\mathbb{S}}^1)&=&Hom_{c}(\pi_1(\bar{U}_{L})^{ab},\mathbb{S}^1)\\
&:=&\underrightarrow{lim}\,Hom_{c}(\pi_1(\bar{U}_{L})^{ab},\mathbb{S}^1)\\
&=&Hom_{c}(\underleftarrow{lim}\,\pi_1(\bar{U}_{L})^{ab},\mathbb{S}^1)\\
&=&Hom_{c}(C_{\bar{U}},\mathbb{S}^1)=C_{\bar{U}}^D.
\end{eqnarray*}
The exact sequence of topological groups
$$0\rightarrow\mathbb{Z}\rightarrow\mathbb{R}\rightarrow\mathbb{S}^1\rightarrow0$$
induces an exact sequence
$$0\rightarrow\mathbb{Z}\rightarrow \widetilde{\mathbb{R}}\rightarrow \widetilde{\mathbb{S}}^1\rightarrow0$$
of abelian sheaves on $\bar{U}_{L}$. The induced long exact sequence
$$0=H^1(\bar{U}_{L},\mathbb{Z})\rightarrow H^1(\bar{U}_{L},\widetilde{\mathbb{R}})\rightarrow
H^1(\bar{U}_{L},\widetilde{\mathbb{S}}^1)\rightarrow
H^2(\bar{U}_{L},\mathbb{Z})\rightarrow
H^2(\bar{U}_{L},\widetilde{\mathbb{R}})=0$$
is canonically identified with
$$0\rightarrow Hom_{c}(C_{\bar{U}},\mathbb{R})\rightarrow Hom_{c}(C_{\bar{U}},\mathbb{S}^1)\rightarrow
H^2(\bar{U}_{L},\mathbb{Z})\rightarrow 0$$
and we obtain $H^2(\bar{U}_{L},\mathbb{Z})=(C^1_{\bar{U}})^D$.
\end{proof}

Recall that we have a canonical morphism $\gamma:\bar{X}_{L}\rightarrow\bar{X}_{et}$.
We consider the truncated functor $\tau_{\leq 2}R\gamma_*$ of the total derived functor $R\gamma_*$.
\begin{cor}
We have $\gamma_*\mathbb{Z}=\mathbb{Z}$, $R^1(\gamma_*)\mathbb{Z}=0$ and $R^2(\gamma_*)\mathbb{Z}$ is the \'etale sheaf associated to the abelian presheaf
$$\fonc{\mathcal{P}^2\gamma_*\mathbb{Z}}{Et_{\bar{X}}}{\underline{Ab}}{\bar{U}}
{(C^1_{\bar{U}})^{D}}.$$
\end{cor}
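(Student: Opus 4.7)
The plan is to identify $R^i\gamma_*\mathbb{Z}$ for $i=0,1,2$ by applying the standard description of higher direct images in terms of presheaves on the étale site, then specializing to connected \'etale $\bar X$-schemes where the preceding proposition directly applies.

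First, I would recall the general fact that for any morphism of topoi $\gamma:\bar{X}_L\to\bar{X}_{et}$ and any abelian object $\mathcal{A}$ of $\bar{X}_L$, the higher direct image $R^i\gamma_*\mathcal{A}$ is the \'etale sheaf associated to the presheaf
\[
\mathcal{P}^i\gamma_*\mathcal{A}:\bar{U}\longmapsto H^i\bigl(\bar{X}_L/\gamma^*\bar{U},\mathcal{A}\vert_{\gamma^*\bar{U}}\bigr)
\]
on $Et_{\bar{X}}$. By Axiom (2) we have $\bar{X}_L/\gamma^*\bar{U}=\bar{U}_L$, so for $\mathcal{A}=\mathbb{Z}$ this presheaf evaluated on a connected \'etale $\bar{X}$-scheme $\bar{U}$ equals $H^i(\bar{U}_L,\mathbb{Z})$, which was just computed in the previous proposition. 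Since an arbitrary \'etale $\bar{X}$-scheme is a finite disjoint union of connected ones and both cohomology and the slice construction split as a product over connected components, the values of $\mathcal{P}^i\gamma_*\mathbb{Z}$ on general \'etale $\bar X$-schemes are determined by this connected case.

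Next I would handle the three degrees. For $i=0$: the presheaf sends a connected $\bar U$ to $\mathbb{Z}$, and more generally to $\mathbb{Z}^{\pi_0(\bar U)}$, which is precisely the value on $\bar U$ of the constant \'etale sheaf $\mathbb{Z}$; since $\gamma_*\mathbb{Z}$ is already a sheaf, this gives $\gamma_*\mathbb{Z}=\mathbb{Z}$. For $i=1$: the presheaf vanishes on connected $\bar U$ by the previous proposition, hence vanishes on all \'etale $\bar X$-schemes, so its sheafification $R^1\gamma_*\mathbb{Z}$ is zero. For $i=2$: the presheaf $\mathcal{P}^2\gamma_*\mathbb{Z}$ on connected $\bar U$ is $(C^1_{\bar U})^D$, and $R^2\gamma_*\mathbb{Z}$ is by definition its sheafification, which is exactly the statement of the corollary.

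The only point that requires some attention is the functoriality of $\bar{U}\mapsto(C^1_{\bar U})^D$ making $\mathcal{P}^2\gamma_*\mathbb{Z}$ into an honest presheaf on $Et_{\bar X}$: restriction along a morphism $\bar V\to\bar U$ of connected \'etale $\bar X$-schemes must correspond, under the isomorphism of the previous proposition, to the Pontryagin dual of the norm map $C^1_{\bar V}\to C^1_{\bar U}$ on maximal compact subgroups. This compatibility is forced by the functoriality of $\gamma$ combined with Axiom (4), so no independent verification is needed. Everything else is a formal consequence of the previous proposition and of the standard characterization of $R^i\gamma_*$ by sheafification.
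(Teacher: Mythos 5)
Your proposal is correct and follows the same route as the paper: both identify $R^n\gamma_*\mathbb{Z}$ as the sheafification of the presheaf $\bar{U}\mapsto H^n(\bar{X}_L/\gamma^*\bar{U},\mathbb{Z})$, apply the preceding Proposition to evaluate this presheaf on connected $\bar{U}$, and invoke Axiom (4) to identify the restriction maps with Pontryagin duals of the norm maps $C^1_{\bar{V}}\to C^1_{\bar{U}}$. Your added remarks about reducing to connected $\bar{U}$ via disjoint unions and the explicit treatment of degrees $0$ and $1$ merely spell out what the paper leaves implicit.
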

\begin{proof}
The sheaf $R^n(\gamma_*)\mathbb{Z}$ is the sheaf associated to the presheaf $\bar{U}\mapsto H^{n}(\bar{X}_{L}/\gamma^*\bar{U},\mathbb{Z})$. Hence the Corollary follows immediately from the previous Proposition. Note that it follows from Axiom (4) that the restriction map
$$\mathcal{P}^2\gamma_*\mathbb{Z}(\bar{U})=(C^1_{\bar{U}})^D\rightarrow \mathcal{P}^2\gamma_*\mathbb{Z}(\bar{V})=(C^1_{\bar{V}})^D$$
is the Pontryagin dual of the canonical morphism $C^1_{\bar{V}}\rightarrow C^1_{\bar{U}}$ (induced by the norm map), for any $\bar{V}\rightarrow\bar{U}$ in $Et_{\bar{X}}$.
\end{proof}

The cohomology of sheaf $R^2\gamma_*\mathbb{Z}$ associated to $\mathcal{P}^2\gamma_*\mathbb{Z}$ is computed in Section \ref{sect-R2Z}. The étale sheaf $R^2\gamma_*\mathbb{Z}$ is acyclic for the global sections
functor on ${\bar{X}}_{et}$. More precisely, we have
\begin{eqnarray*}
H^n(\bar{X}_{et};R^2\gamma_*\mathbb{Z})&=& Hom(\mathcal{O}^{\times}_F,\mathbb{Q}) \mbox{ for }n=0,\\
                           &=&0 \mbox{ for }n\geq1.
\end{eqnarray*}

Recall that $Pic(\bar{X})=C_{\bar{X}}$ is the Arakelov Picard group of $F$, and that $\mu_F$ is the group of roots of unity in $F$. We compute below the hypercohomology of the complex of abelian \'etale sheaves $\tau_{\leq 2}R\gamma_*\mathbb{Z}$.
\begin{thm}\label{thm-coho-tau-RgZ}
One has
\begin{eqnarray*}
\mathbb{H}^n(\bar{X}_{et},\tau_{\leq 2}R\gamma_*\mathbb{Z})
&=&\mathbb{Z}\mbox{ for $n=0$}\\
&=&0 \mbox{ for $n=1$}\\
&=&Pic^1(\bar{X})^D \mbox{ for $n=2$}\\
&=&\mu_F^D \mbox{ for $n=3$}\\
&=&0 \mbox{ for $n\geq4$}\\
\end{eqnarray*}
\end{thm}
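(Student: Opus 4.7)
The plan is to combine the distinguished triangle
\[
\mathbb{Z} \longrightarrow \tau_{\leq 2}R\gamma_*\mathbb{Z} \longrightarrow R^2\gamma_*\mathbb{Z}[-2]
\]
in $D^+(\bar{X}_{et})$ (valid because the Corollary identifies $\gamma_*\mathbb{Z}=\mathbb{Z}$ and $R^1\gamma_*\mathbb{Z}=0$, so that $\tau_{\leq 1}R\gamma_*\mathbb{Z}=\mathbb{Z}$) with the cohomology of $R^2\gamma_*\mathbb{Z}$ already computed in the paragraph preceding the theorem. Applying $R\Gamma(\bar{X}_{et},-)$ yields a long exact sequence
\[
\cdots\to H^n(\bar{X}_{et},\mathbb{Z})\to\mathbb{H}^n(\bar{X}_{et},\tau_{\leq 2}R\gamma_*\mathbb{Z})\to H^{n-2}(\bar{X}_{et},R^2\gamma_*\mathbb{Z})\xrightarrow{\delta}H^{n+1}(\bar{X}_{et},\mathbb{Z})\to\cdots
\]
Since $H^p(\bar{X}_{et},R^2\gamma_*\mathbb{Z})=0$ for $p\geq 1$ and $H^0(\bar{X}_{et},R^2\gamma_*\mathbb{Z})=\mathrm{Hom}(\mathcal{O}_F^\times,\mathbb{Q})$, this sequence immediately gives $\mathbb{H}^0=H^0(\bar{X}_{et},\mathbb{Z})$, $\mathbb{H}^1=H^1(\bar{X}_{et},\mathbb{Z})$, and $\mathbb{H}^n=H^n(\bar{X}_{et},\mathbb{Z})$ for $n\geq 4$; the only non-trivial piece is
\[
0\to H^2(\bar{X}_{et},\mathbb{Z})\to\mathbb{H}^2\to\mathrm{Hom}(\mathcal{O}_F^\times,\mathbb{Q})\xrightarrow{\delta}H^3(\bar{X}_{et},\mathbb{Z})\to\mathbb{H}^3\to 0.
\]

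Next I would compute the étale cohomology of $\mathbb{Z}$ on the Artin--Verdier topos. Using the exact sequence $0\to\mathbb{Z}\to\mathbb{Q}\to\mathbb{Q}/\mathbb{Z}\to 0$ together with the acyclicity of the uniquely divisible sheaf $\mathbb{Q}$ on $\bar{X}_{et}$, one reduces to $H^{n-1}(\bar{X}_{et},\mathbb{Q}/\mathbb{Z})$ for $n\geq 2$. Artin--Verdier duality for totally imaginary $F$ (pairing with $\mathbb{G}_m$, whose fundamental class lives in $H^3(\bar{X}_{et},\mathbb{G}_m)=\mathbb{Q}/\mathbb{Z}$) then identifies
\[
H^0=\mathbb{Z},\quad H^1=0,\quad H^2=Cl(F)^D,\quad H^3=\mathrm{Hom}(\mathcal{O}_F^\times,\mathbb{Q}/\mathbb{Z}),\quad H^n=0\ \text{for}\ n\geq 4,
\]
the vanishing in high degree using that there are no real places.

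Finally I would identify the connecting map $\delta$ with the natural map $\mathrm{Hom}(\mathcal{O}_F^\times,\mathbb{Q})\to\mathrm{Hom}(\mathcal{O}_F^\times,\mathbb{Q}/\mathbb{Z})$ induced by $\mathbb{Q}\twoheadrightarrow\mathbb{Q}/\mathbb{Z}$. Given this, Dirichlet's unit theorem $\mathcal{O}_F^\times\simeq\mu_F\oplus\mathbb{Z}^{r_2-1}$ yields $\ker\delta\simeq\mathbb{Z}^{r_2-1}$ and $\mathrm{coker}\,\delta\simeq\mu_F^D$, so $\mathbb{H}^3=\mu_F^D$ and $\mathbb{H}^2$ sits in a short exact sequence $0\to Cl(F)^D\to\mathbb{H}^2\to\mathbb{Z}^{r_2-1}\to 0$. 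To match $\mathbb{H}^2$ with $\mathrm{Pic}^1(\bar{X})^D$, I would Pontryagin-dualize the Arakelov extension
\[
0\to\mathbb{R}^{r_2-1}/\log(\mathcal{O}_F^\times/\mu_F)\to\mathrm{Pic}^1(\bar{X})\to Cl(F)\to 0
\]
from the proof of Theorem \ref{main-thm} and check that the two extensions coincide (here $\mathbb{Z}^{r_2-1}=\mathrm{Hom}(\log(\mathcal{O}_F^\times/\mu_F),\mathbb{Z})$ is precisely the Pontryagin dual of the torus $\mathbb{R}^{r_2-1}/\log(\mathcal{O}_F^\times/\mu_F)$).

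The main obstacle is the third paragraph: verifying both the explicit form of $\delta$ and the identification of extension classes. I would attack this by tracing the connecting map through the explicit description of $R^2\gamma_*\mathbb{Z}$ as the sheafification of $\bar{U}\mapsto(C^1_{\bar{U}})^D$, using functoriality in $\bar{U}$ (Axioms (4)--(6)) to reduce to the local picture at each closed point, where class field theory (Axiom (3)) compares the unit embedding $\mathcal{O}_F^\times\hookrightarrow\prod_v\mathcal{O}^\times_{F_v}$ with the reciprocity map. An alternative, and likely cleaner, route is to use the already established computation $H^2(\bar{X}_L,\mathbb{Z})=\mathrm{Pic}^1(\bar{X})^D$ from Theorem \ref{main-thm} and compare with $\mathbb{H}^2(\bar{X}_{et},\tau_{\leq 2}R\gamma_*\mathbb{Z})$ via the canonical edge map of the Leray spectral sequence for $\gamma$.
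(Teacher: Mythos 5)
Your overall framework (the triangle $\mathbb{Z}\to\tau_{\leq 2}R\gamma_*\mathbb{Z}\to R^2\gamma_*\mathbb{Z}[-2]$, equivalent here to the paper's hypercohomology spectral sequence since $R^1\gamma_*\mathbb{Z}=0$) and the recollection of $H^\bullet(\bar{X}_{et},\mathbb{Z})$ are the same as the paper's. The degree-$0,1$ and degree-$\geq4$ claims are fine. The difference, and the gap, is in how you propose to close degrees $2$ and $3$.

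You flag the correct obstacle — identifying the connecting map $\delta$ and the extension class of $\mathbb{H}^2$ — but your two proposed remedies are not equivalent to the paper's argument, and the first one is not carried out. Your primary route would require proving that $\delta$ is literally the map induced by $\mathbb{Q}\twoheadrightarrow\mathbb{Q}/\mathbb{Z}$; the paper never establishes this (and does not need to). Your ``cleaner alternative'' for $\mathbb{H}^2$ is indeed what the paper does: since $\tau_{\leq 2}$ does not change hypercohomology in degrees $\leq 2$, one has $\mathbb{H}^2(\bar{X}_{et},\tau_{\leq 2}R\gamma_*\mathbb{Z})=\mathbb{H}^2(\bar{X}_{et},R\gamma_*\mathbb{Z})=H^2(\bar{X}_L,\mathbb{Z})=Pic^1(\bar{X})^D$ — this is a direct identification, not an edge-map comparison, and it removes the need to match extension classes by hand. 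However, you do not notice that even after this shortcut, computing $\mathbb{H}^3$ is still nontrivial: one must determine the cokernel of the map $Pic^1(\bar{X})^D\to Hom(\mathcal{O}_F^\times,\mathbb{Q}/\mathbb{Z})$. The paper does this without pinning down $\delta$: it identifies the first two maps of the five-term sequence explicitly (via Axiom (3) and the presheaf-to-sheaf morphism $\mathcal{P}^2\gamma_*\mathbb{Z}\to R^2\gamma_*\mathbb{Z}$), concludes that the image of $Pic^1(\bar{X})^D$ in $Hom(\mathcal{O}_F^\times,\mathbb{Q})$ is $Hom(\mathcal{O}_F^\times,\mathbb{Z})$, and is then left with an injective map $\alpha:Hom(\mathcal{O}_F^\times/\mu_F,\mathbb{Q}/\mathbb{Z})\hookrightarrow Hom(\mathcal{O}_F^\times,\mathbb{Q}/\mathbb{Z})$. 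Rather than showing $\alpha$ is the canonical inclusion, the paper shows its image coincides with that of the canonical inclusion by a divisibility/finiteness argument (the projection to the finite factor $\mu_F^D$ kills the divisible source, and an injective endomorphism of a finite sum of copies of $\mathbb{Q}/\mathbb{Z}$ is automatically surjective since $n$-torsion subgroups are finite of equal order). This abstract trick replaces your unproven explicit description of $\delta$ and is the missing ingredient your proposal needs to become a complete proof.
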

Recall that the Artin-Verdier \'etale cohomology of $\mathbb{Z}$ is given by
$$H^i(\bar{X}_{et},\mathbb{Z})=\mathbb{Z},\,0,\,Cl(F)^D,\,Hom(\mathcal{O}_F^*,\,\mathbb{Q}/\mathbb{Z}),\,0\mbox{ for $i=0,1,2,3$ and $i\geq4$ respectively.}$$
\begin{proof}
The hypercohomology spectral sequence
$$H^i(\bar{X}_{et},H^j(\tau_{\leq 2}R\gamma_*\mathbb{Z}))\Rightarrow\mathbb{H}^{i+j}(\bar{X}_{et},\tau_{\leq 2}R\gamma_*\mathbb{Z})$$
gives first $\mathbb{H}^0(\bar{X}_{0},\tau_{\leq 2}R\gamma_*\mathbb{Z})=\mathbb{Z}$ and $\mathbb{H}^1(\bar{X}_{et},\tau_{\leq 2}R\gamma_*\mathbb{Z})=0$. One the other hand we have $$\mathbb{H}^n(\bar{X}_{et},\tau_{\leq 2}R\gamma_*\mathbb{Z})=\mathbb{H}^n(\bar{X}_{et},R\gamma_*\mathbb{Z})={H}^n(\bar{X}_{L},\mathbb{Z})$$
for any $n\leq2$. In particular, we have
$$\mathbb{H}^2(\bar{X}_{et},\tau_{\leq 2}R\gamma_*\mathbb{Z})=H^2(\bar{X}_{L},\mathbb{Z})=Pic^1(\bar{X})^D.$$
Therefore, the spectral sequence above yields an exact sequence
$$0\rightarrow Cl(F)^D\rightarrow Pic^1(\bar{X})^D \rightarrow Hom(\mathcal{O}_F^*,\,\mathbb{Q})\rightarrow Hom(\mathcal{O}_F^*,\,\mathbb{Q}/\mathbb{Z})\rightarrow \mathbb{H}^3(\bar{X}_{L},\tau_{\leq 2}R\gamma_*\mathbb{Z})\rightarrow 0.$$
Here the maps $Cl(F)^D\rightarrow Pic^1(\bar{X})^D \rightarrow Hom(\mathcal{O}_F^*,\,\mathbb{Q})$ are explicitly given. The first is induced by the morphism from the Weil-\'etale fundamental group to the \'etale fundamental group
$$\pi_1(\bar{X}_{L})^{ab}\longrightarrow \pi_1(\bar{X}_{et})^{ab}$$
given by Axiom (3), and the second is induced by the canonical morphism $\mathcal{P}^2\gamma_*\mathbb{Z}\rightarrow R^2(\gamma_*)\mathbb{Z}$ (the map from a presheaf to its associated sheaf). It follows that the cokernel of the map
$$Pic^1(\bar{X})^D \rightarrow Hom(\mathcal{O}_F^*,\,\mathbb{Q})$$ is
$$Hom(\mathcal{O}_F^*,\,\mathbb{Q})/Hom(\mathcal{O}_F^*,\,\mathbb{Z})\simeq Hom(\mathcal{O}_F^*/\mu_F,\,\mathbb{Q}/\mathbb{Z}).$$
We obtain an exact sequence
\begin{equation}\label{last-exact-sequence}
0\rightarrow Hom(\mathcal{O}_F^*/\mu_F,\,\mathbb{Q}/\mathbb{Z})\rightarrow Hom(\mathcal{O}_F^*,\,\mathbb{Q}/\mathbb{Z})
\rightarrow \mathbb{H}^3(\bar{X}_{et},\tau_{\leq 2}R\gamma_*\mathbb{Z})\rightarrow 0.
\end{equation}
Let us denote by $\alpha:Hom(\mathcal{O}_F^*/\mu_F,\,\mathbb{Q}/\mathbb{Z})\rightarrow Hom(\mathcal{O}_F^*,\,\mathbb{Q}/\mathbb{Z})$ the first map of the exact sequence (\ref{last-exact-sequence}). We need to show that $\alpha$ is the canonical map. There is a decomposition
$$Hom(\mathcal{O}_F^*,\,\mathbb{Q}/\mathbb{Z})=Hom(\mathcal{O}_F^*/\mu_F,\,\mathbb{Q}/\mathbb{Z})\times \mu_F^D$$
and the composition (where $p$ is the projection)
$$p\circ \alpha:Hom(\mathcal{O}_F^*/\mu_F,\,\mathbb{Q}/\mathbb{Z})\rightarrow Hom(\mathcal{O}_F^*,\,\mathbb{Q}/\mathbb{Z})
\rightarrow \mu_F^D$$
must be 0 since $Hom(\mathcal{O}_F^*/\mu_F,\,\mathbb{Q}/\mathbb{Z})$ is divisible and $\mu_F^D$ finite. It follows that the image of $\alpha$
is contained in the subgroup
$$Hom(\mathcal{O}_F^*/\mu_F,\,\mathbb{Q}/\mathbb{Z})\subset Hom(\mathcal{O}_F^*,\,\mathbb{Q}/\mathbb{Z})$$
hence $\alpha$ induces an injective morphism
$$\widetilde{\alpha}:Hom(\mathcal{O}_F^*/\mu_F,\,\mathbb{Q}/\mathbb{Z})\hookrightarrow Hom(\mathcal{O}_F^*/\mu_F,\,\mathbb{Q}/\mathbb{Z}).$$
Since those two groups are both finite sums of  $\mathbb{Q}/\mathbb{Z}$'s, this map $\widetilde{\alpha}$ needs to be an isomorphism. Indeed, the $n$-torsion subgroups are both finite of same cardinality for any $n$, hence an injective map must be bijective.
Hence $\alpha$ has the same image as the canonical map, i.e. the image of the map induced by the quotient map $\mathcal{O}_F^*\rightarrow \mathcal{O}_F^*/\mu_F$. Hence the exact sequence (\ref{last-exact-sequence}) yields a canonical identification
$$\mathbb{H}^3(\bar{X}_{et},\tau_{\leq 2}R\gamma_*\mathbb{Z})=\mu_F^D.$$

Finally, one has
$$\mathbb{H}^n(\bar{X}_{et},\tau_{\leq 2}R\gamma_*\mathbb{Z})=0\mbox{ for $n\geq4$} $$
since the diagonals of the hypercohomology spectral sequence are all trivial for $n\geq4$.
\end{proof}

Let $\varphi:X_{L}\rightarrow \bar{X}_{L}$ be the open embedding. By Axiom (7), we have an open/closed decoposition
$$\varphi:X_{L}\rightarrow \bar{X}_{L}\leftarrow \coprod_{v\in X_{\infty}}B_{W_{k(v)}}:i_{\infty}$$
In particular we have adjoint functors $\varphi_{!},\,\varphi^*,\,\varphi_{*}$. If we consider the induced functors on abelian sheaves, we have adjoint functors $i_{\infty}^*,\,i_{\infty*},\,i_{\infty}^!$, showing that $i_{\infty*}$ is exact (on abelian objects).
We obtain an exact sequence of sheaves
\begin{equation}\label{exact-sequence-WEsheaves}
0\rightarrow\varphi_!\varphi^*\mathcal{A}\rightarrow\mathcal{A}\rightarrow\prod_{v\mid\infty}i_{v*}i^*_{v}\mathcal{A}\rightarrow0.
\end{equation}
for any abelian object $\mathcal{A}$ of $\bar{X}_{L}$.
\begin{thm}\label{thm-coho-tau-cpct-support-RgZ}
One has canonically
\begin{eqnarray*}
\mathbb{H}^n(\bar{X}_{et},\tau_{\leq 2}R\gamma_*(\varphi_!\mathbb{Z}))
&=&0\mbox{ for $n=0$}\\
&=&(\prod_{X_{\infty}}\mathbb{Z})/\mathbb{Z} \mbox{ for $n=1$}\\
&=&Pic^1(\bar{X})^D \mbox{ for $n=2$}\\
&=&\mu_F^D \mbox{ for $n=3$}\\
&=&0 \mbox{ for $n\geq4$}\\
\end{eqnarray*}
\end{thm}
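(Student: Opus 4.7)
The plan is to derive the statement from Theorem \ref{thm-coho-tau-RgZ} via the short exact sequence of sheaves on $\bar{X}_L$
$$0 \longrightarrow \varphi_!\mathbb{Z} \longrightarrow \mathbb{Z} \longrightarrow i_{\infty*}i_\infty^*\mathbb{Z} \longrightarrow 0,$$
where $i_\infty=\coprod_{v\in X_\infty}i_v$ collects the closed embeddings of Axiom (7). Since $F$ is totally imaginary every $v\mid\infty$ is complex, so each $B_{W_{k(v)}}=B_\mathbb{R}$ and $G_{k(v)}=1$.

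\emph{Step 1 (archimedean direct image).} By Axiom (7) the composite $\gamma\circ i_v$ factors as $u_v\circ\alpha_v$ with $u_v:\underline{Sets}\to\bar{X}_{et}$ the archimedean point and $\alpha_v:B_\mathbb{R}\to\underline{Sets}$ the structure map. Since $u_{v*}$ and $i_{v*}$ are exact, $R\gamma_*(i_{v*}\mathbb{Z})=u_{v*}R\alpha_{v*}\mathbb{Z}$, so the question reduces to $H^n(B_\mathbb{R},\mathbb{Z})$ for small $n$. Starting from $0\to\mathbb{Z}\to\mathbb{R}\to\mathbb{S}^1\to 0$ on $\mathcal{T}$, the vanishing $H^n(B_\mathbb{R},\mathbb{R})=0$ for $n\geq 2$ (Flach's Prop.~9.6, recalled earlier) together with the identification of the induced map $H^1(B_\mathbb{R},\mathbb{R})\to H^1(B_\mathbb{R},\mathbb{S}^1)$ as the tautological self-isomorphism of $\mathbb{R}$ viewed as its own Pontryagin dual forces $H^1(B_\mathbb{R},\mathbb{Z})=H^2(B_\mathbb{R},\mathbb{Z})=0$. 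Consequently $R\gamma_*(i_{\infty*}i_\infty^*\mathbb{Z})$ is concentrated in degree zero and equals $\bigoplus_{v\mid\infty}i_{v*}\mathbb{Z}$; applying the long exact sequence of cohomology sheaves of $R\gamma_*$ to the opening SES yields $R^0\gamma_*\varphi_!\mathbb{Z}=j_!\mathbb{Z}$ (with $j:X\hookrightarrow\bar{X}$ the open immersion), $R^1\gamma_*\varphi_!\mathbb{Z}=0$, and $R^n\gamma_*\varphi_!\mathbb{Z}=R^n\gamma_*\mathbb{Z}$ for all $n\geq 2$.

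\emph{Step 2 (low degrees).} For $n\leq 2$ the truncation $\tau_{\leq 2}$ is harmless, so $\mathbb{H}^n(\bar{X}_{et},\tau_{\leq 2}R\gamma_*\varphi_!\mathbb{Z})=H^n(\bar{X}_L,\varphi_!\mathbb{Z})$. The long exact sequence in $\bar{X}_L$-cohomology attached to the opening SES, flanked by zeros thanks to Step 1 and fed by Hypothesis \ref{Hyp2}, immediately delivers $0$, $(\prod_{X_\infty}\mathbb{Z})/\mathbb{Z}$ and $Pic^1(\bar{X})^D$ in degrees $0,1,2$ respectively.

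\emph{Step 3 (degrees $\geq 3$).} Here I would run the hypercohomology spectral sequence
$$E_2^{p,q}=H^p(\bar{X}_{et},R^q\gamma_*\varphi_!\mathbb{Z})\Longrightarrow \mathbb{H}^{p+q}(\bar{X}_{et},\tau_{\leq 2}R\gamma_*\varphi_!\mathbb{Z}).$$
The row $q=0$ is $H^p(\bar{X}_{et},j_!\mathbb{Z})$, computed from the étale open-closed sequence $0\to j_!\mathbb{Z}\to\mathbb{Z}\to i_{\infty*}\mathbb{Z}\to 0$ and the Artin--Verdier cohomology of $\mathbb{Z}$ recalled just before Theorem \ref{thm-coho-tau-RgZ}: the values are $0,(\prod_{X_\infty}\mathbb{Z})/\mathbb{Z},Cl(F)^D,Hom(\mathcal{O}_F^\times,\mathbb{Q}/\mathbb{Z}),0$ in degrees $0,1,2,3,\geq 4$. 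The row $q=1$ vanishes, and the row $q=2$ reduces by Section \ref{sect-R2Z} to $Hom(\mathcal{O}_F^\times,\mathbb{Q})$ at $p=0$ and zero elsewhere. The only nonzero higher differential is $d_3:E_3^{0,2}\to E_3^{3,0}$, which --- exactly as in the proof of Theorem \ref{thm-coho-tau-RgZ} --- is the canonical quotient $Hom(\mathcal{O}_F^\times,\mathbb{Q})\to Hom(\mathcal{O}_F^\times,\mathbb{Q}/\mathbb{Z})$; its cokernel is $\mu_F^D$, producing $\mathbb{H}^3=\mu_F^D$, while $E_2^{p,q}=0$ for $p+q\geq 4$ with $q\leq 2$ delivers $\mathbb{H}^n=0$ for $n\geq 4$.

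The main obstacle is the archimedean vanishing $H^n(B_\mathbb{R},\mathbb{Z})=0$ for $n=1,2$ in Step 1. The case $n=1$ is immediate from connectedness of $\mathbb{R}$ and discreteness of $\mathbb{Z}$, but $n=2$ requires careful tracking through the $\mathbb{S}^1$-long exact sequence and verification that the connecting $\mathbb{R}\to\mathbb{R}$ is a genuine isomorphism. This is the only point that uses specifics of $\mathcal{T}$ beyond Axioms (1)--(9); once it is in hand, the rest of the argument parallels the proof of Theorem \ref{thm-coho-tau-RgZ} almost verbatim, the only substantive change being the replacement of $H^p(\bar{X}_{et},\mathbb{Z})$ by $H^p(\bar{X}_{et},j_!\mathbb{Z})$ on the $q=0$ row of the spectral sequence.
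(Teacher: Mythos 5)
Your argument is correct and follows the paper's strategy: isolate the archimedean contribution via Axiom (7), show $R\gamma_*(i_{v*}\mathbb Z)=u_{v*}\mathbb Z$, and then feed the open/closed sequence (\ref{exact-sequence-WEsheaves}) through (hyper)cohomology. The one place you diverge is degrees $n\geq 3$: you re-run the hypercohomology spectral sequence for $\tau_{\leq 2}R\gamma_*\varphi_!\mathbb Z$ from scratch (with $q=0$ row now $H^p(\bar X_{et},j_!\mathbb Z)$) and re-identify the $d_3$ differential, whereas the paper simply applies $\tau_{\leq 2}R\gamma_*$ to (\ref{exact-sequence-WEsheaves}) and reads everything off the resulting long exact sequence together with Theorem \ref{thm-coho-tau-RgZ} and $H^*(\bar X_{et},u_{v*}\mathbb Z)=H^*(\underline{Sets},\mathbb Z)$ --- no second spectral sequence is needed, and in particular no re-examination of $d_3$ (the map $H^3(\bar X_{et},j_!\mathbb Z)\to H^3(\bar X_{et},\mathbb Z)$ being an isomorphism is implicit in the archimedean vanishing, which is why your ``by analogy'' identification of $d_3$ is in fact justified). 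Also, you flag $H^2(B_{\mathbb R},\mathbb Z)=0$ as the delicate point requiring the $\mathbb S^1$-sequence; this is over-cautious, since $\mathbb Z$ is discrete and Flach's Prop.\ 9.6 (already invoked in the Lemma inside Theorem \ref{main-thm}) gives $H^n(B_{\mathbb R},A)=0$ for all $n\geq 1$ and all discrete $A$ directly, which is exactly what the paper's one-line assertion $H^n(B_{W_{k(v)}},\mathbb Z)=0$ for $n\geq 1$ is appealing to.
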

\begin{proof} Let $v\in X_{\infty}$. By Axiom (7), one has $\gamma\circ i_v=u_v\circ \alpha_v$, where $\alpha_v:B_{W_{k(v)}}\rightarrow\underline{Sets}$ is the unique map. The morphisms $i_v$ and $u_v$  are both closed embeddings so that $i_{v*}$ and $u_{v*}$ are both exact, hence one has
$$R(\gamma_*)i_{v*}\mathbb{Z}=u_{v*}R(\alpha_{v*})\mathbb{Z}.$$
This complex is concentrated in degree 0 since $R^n(\alpha_{v*})\mathbb{Z}=H^n(B_{W_{k(v)}},\mathbb{Z})=0$ for any $n\geq1$, and one has $R^0(\gamma_*)i_{v*}\mathbb{Z}=u_{v*}\mathbb{Z}$.
Hence the theorem follows from Theorem \ref{thm-coho-tau-RgZ}, exact sequence (\ref{exact-sequence-WEsheaves}), and $H^*(\bar{X}_{et},u_{v*}\mathbb{Z})=H^*(\underline{Sets},\mathbb{Z})$.
\end{proof}

\begin{rem}
A complex quasi-isomorphic to $\tau_{\leq 2}R\gamma_*(\varphi_!\mathbb{Z})$ was constructed in \cite{On the WE} using a more complicated method.
\end{rem}

\subsection{Dedekind zeta functions at $s=0$}
We denote by $(\oplus_{v\mid\infty}W_{k(v)})^1$ the kernel of the canonical morphism
$\oplus_{v\mid\infty}W_{k(v)}\rightarrow \mathbb{R}_{>0}$.
\begin{thm}
We have canonical isomorphisms :
\begin{eqnarray*}
H^n(\bar{X}_{L},\varphi_!\widetilde{\mathbb{R}})&=&(\prod_{v\mid\infty}\mathbb{R})/\mathbb{R}\mbox{ for $n=1$}\\
&=&Hom_c((\oplus_{v\mid\infty}W_{k(v)})^1,\mathbb{R}) \mbox{ for $n=2$}\\
&=&0\mbox{ for $n\neq1,2$}
\end{eqnarray*}
\end{thm}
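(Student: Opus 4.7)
The plan is to feed the open/closed localisation sequence
\[
0\longrightarrow \varphi_!\widetilde{\mathbb{R}}\longrightarrow \widetilde{\mathbb{R}}\longrightarrow \prod_{v\mid\infty} i_{v*}i_v^*\widetilde{\mathbb{R}}\longrightarrow 0,
\]
which is the specialisation of (\ref{exact-sequence-WEsheaves}) to $\mathcal{A}=\widetilde{\mathbb{R}}$, into the long exact sequence of $H^*(\bar{X}_L,-)$. By Axiom (7) each $i_v$ is a closed embedding, so $i_{v*}$ is exact on abelian sheaves and $H^n(\bar{X}_L,i_{v*}i_v^*\widetilde{\mathbb{R}})=H^n(B_{W_{k(v)}},i_v^*\widetilde{\mathbb{R}})$.

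First I would compute the two ends. The cohomology of $\widetilde{\mathbb{R}}$ on $\bar{X}_L$ has already been determined in the previous subsection (from Axioms (2), (3) and (9)):
\[
H^0(\bar{X}_L,\widetilde{\mathbb{R}})=\mathbb{R},\quad H^1(\bar{X}_L,\widetilde{\mathbb{R}})=\mathrm{Hom}_c(Pic(\bar{X}),\mathbb{R}),\quad H^{\geq 2}=0.
\]
Since $F$ is totally imaginary every archimedean $v$ is complex, so $W_{k(v)}=F_v^\times/\mathbb{S}^1\cong \mathbb{R}_{>0}$. The structure map of $B_{W_{k(v)}}$ over $\mathcal{T}$ is the composition $t\circ i_v$, so $i_v^*\widetilde{\mathbb{R}}$ is the constant sheaf associated to $y\mathbb{R}$ on $B_{W_{k(v)}}$. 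Proposition \ref{prop-pi1-represents} gives $H^1(B_{W_{k(v)}},i_v^*\widetilde{\mathbb{R}})=\mathrm{Hom}_c(W_{k(v)},\mathbb{R})\cong \mathbb{R}$, and the vanishing of $H^n(B_\mathbb{R},\mathbb{R})$ for $n\geq 2$ (as used in the lemma in the proof of Theorem \ref{main-thm}, following \cite{MatFlach} Prop.~9.6) gives $H^n=0$ for $n\geq 2$; the degree $0$ piece is $\mathbb{R}$ by connectedness of the structure map.

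Feeding this into the long exact sequence produces
\[
0\to H^0(\bar{X}_L,\varphi_!\widetilde{\mathbb{R}})\to \mathbb{R}\xrightarrow{\Delta} \prod_{v\mid\infty}\mathbb{R}\to H^1(\bar{X}_L,\varphi_!\widetilde{\mathbb{R}})\to \mathrm{Hom}_c(Pic(\bar{X}),\mathbb{R})\xrightarrow{\delta} \prod_{v\mid\infty}\mathrm{Hom}_c(W_{k(v)},\mathbb{R})\to H^2(\bar{X}_L,\varphi_!\widetilde{\mathbb{R}})\to 0,
\]
with $H^n(\bar{X}_L,\varphi_!\widetilde{\mathbb{R}})=0$ for $n\geq 3$. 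The map $\Delta$ is the diagonal (it is induced by the adjunctions $\mathrm{id}\to i_{v*}i_v^*$ applied to the global section functor, which on constant sheaves is just pullback to a point), so $\Delta$ is injective with cokernel $(\prod_{v\mid\infty}\mathbb{R})/\mathbb{R}$. The decisive step, and the main obstacle, is identifying $\delta$: under the isomorphism $\mathrm{Hom}_c(Pic(\bar{X}),\mathbb{R})\cong \mathbb{R}$ coming from $Pic(\bar{X})\twoheadrightarrow \mathbb{R}_{>0}$ with compact kernel, $\delta$ is the map obtained by pre-composition with each $W_{k(v)}\to Pic(\bar{X})$. Because $Pic^1(\bar{X})$ is compact, the restriction of $\log|\cdot|$ to $W_{k(v)}$ is nothing but the canonical isomorphism $W_{k(v)}\cong \mathbb{R}_{>0}\xrightarrow{\log}\mathbb{R}$, so under Pontryagin-type duality $\delta$ is the Hom-dual of the summation map $\bigoplus_{v\mid\infty}W_{k(v)}\to \mathbb{R}_{>0}$. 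This map is surjective with kernel $(\oplus_{v\mid\infty}W_{k(v)})^1$, so $\delta$ is injective and its cokernel is $\mathrm{Hom}_c((\oplus_{v\mid\infty}W_{k(v)})^1,\mathbb{R})$. Reading off the sequence then yields $H^0=0$, $H^1=(\prod_{v\mid\infty}\mathbb{R})/\mathbb{R}$, $H^2=\mathrm{Hom}_c((\oplus_{v\mid\infty}W_{k(v)})^1,\mathbb{R})$, and vanishing elsewhere, as claimed.
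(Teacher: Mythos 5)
Your argument is the same as the paper's: feed the open/closed localisation sequence $(\ref{exact-sequence-WEsheaves})$ with $\mathcal{A}=\widetilde{\mathbb{R}}$ into the long exact cohomology sequence, compute the cohomology of $\widetilde{\mathbb{R}}$ and of $\prod_v i_{v*}\widetilde{\mathbb{R}}$, and identify the boundary maps via Axiom (8). The one place you improve on the paper is the explicit verification that the map $Hom_c(Pic(\bar X),\mathbb{R})\to\prod_v Hom_c(W_{k(v)},\mathbb{R})$ is injective (the paper simply writes down the truncated sequence ending in $\cdots\to H^1\to 0$, which tacitly requires this injectivity); otherwise the two proofs coincide.
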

\begin{proof}
The direct image $i_{v*}$ is exact hence the group
$H^n(\bar{X}_{L},\prod_{v\mid\infty}i_{v*}\widetilde{\mathbb{R}})$ is canonically isomorphic to
$$\prod_{v\mid\infty}H^n(B_{W_{k(v)}}\widetilde{\mathbb{R}})=
\prod_{v\mid\infty}\mathbb{R},\,Hom_c(\sum_{v\mid\infty}W_{k(v)},\mathbb{R}),\mbox{ and }0$$
for $n=0,1$ and $n\geq2$ respectively. Using the exact sequence (\ref{exact-sequence-WEsheaves}), the result for $n\geq3$ follows from Axiom (9). By (\ref{exact-sequence-WEsheaves}) we have the exact sequence
$$0\rightarrow H^0(\bar{X}_{L},\varphi_!\widetilde{\mathbb{R}})\rightarrow H^0(\bar{X}_{W},\widetilde{\mathbb{R}})=\mathbb{R}\rightarrow \prod_{v\mid\infty}H^n(B_{W_{k(v)}}\widetilde{\mathbb{R}})=
\prod_{v\mid\infty}\mathbb{R}\rightarrow H^1(\bar{X}_{L},\varphi_!\widetilde{\mathbb{R}})\rightarrow0$$
where the central map is the diagonal embedding. The result follows for $n=0,1$. For $n=2$, we have the exact sequence
$$Hom_c(Pic(\bar{X}),\mathbb{R})\rightarrow Hom_c(\sum_{v\mid\infty}W_{k(v)},\mathbb{R})\rightarrow
H^2(\bar{X}_{L},\varphi_!\widetilde{\mathbb{R}})\rightarrow 1$$
where, by Axiom (8), the first map is induced by the canonical morphism $\sum_{v\mid\infty} W_{k(v)}\rightarrow Pic(\bar{X})$. We obtain a canonical isomorphism
$$H^1(\bar{X}_{L},\varphi_!\widetilde{\mathbb{R}})=Hom_c((\sum_{v\mid\infty}W_{k(v)})^1,\mathbb{R}).$$
\end{proof}

\begin{defn}
We define the \emph{fundamental class} $\theta\in H^1(\bar{X}_{L},\widetilde{\mathbb{R}})$ as the canonical morphism
$$\theta\in H^1(\bar{X}_{L},\widetilde{\mathbb{R}})= Hom_{c}(Pic(\bar{X}),\mathbb{R})$$
\end{defn}
Recall that, for any closed point $v\in\bar{X}$, we have $H^n(B_{W_{k(v)}},\widetilde{\mathbb{R}})=\mathbb{R},\,Hom_c(W_{k(v)},\mathbb{R})\mbox{ and }0$ for $n=0,1$ and $n\geq2$ respectively.
\begin{defn}
For any closed point $v\in\bar{X}$, the \emph{$v$-fundamental class} is the canonical morphism $\theta_v:W_{k(v)}\rightarrow\mathbb{R}$:
$$\theta_v\in H^1(B_{W_{k(v)}},\widetilde{\mathbb{R}})=Hom_c(W_{k(v)},\mathbb{R}).$$
\end{defn}
The morphism obtained by cup product with $\theta_v$ is the canonical isomorphism
$$
\fonc{\cup\theta_v}{H^0(B_{W_{k(v)}},\widetilde{\mathbb{R}})=\mathbb{R}}
{H^1(B_{W_{k(v)}},\widetilde{\mathbb{R}})=Hom_c(W_{k(v)},\mathbb{R})}{1}{\theta_v}
$$

\begin{thm}\label{thm-cup-theta-explicit}
The morphism $\cup\theta$ obtained by cup product with the fundamental class $\theta$ is the canonical isomorphism :
$$
\fonc{\cup\theta}{H^1(\bar{X}_{L},\varphi_!\widetilde{\mathbb{R}})=(\prod_{v\mid\infty}\mathbb{R})/\mathbb{R}}
{H^2(\bar{X}_{L},\varphi_!\widetilde{\mathbb{R}})=Hom_c((\sum_{v\mid\infty}W_{k(v)})^1,\mathbb{R})}{v}{\theta_v\circ p_v}$$
where $p_v:(\sum_{w\mid\infty}W_{k(w)})^1\rightarrow W_{k(v)}$ is given by the projection.
\end{thm}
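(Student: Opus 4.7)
The plan is to reduce the cup-product computation to the archimedean stalks via the short exact sequence (\ref{exact-sequence-WEsheaves}): $0 \to \varphi_!\widetilde{\mathbb{R}} \to \widetilde{\mathbb{R}} \to \prod_{v\mid\infty} i_{v*}\widetilde{\mathbb{R}} \to 0$, combined with the standard compatibility of cup product with the associated connecting homomorphisms. Write $\delta^n : H^n(\bar{X}_L, \prod_{v\mid\infty} i_{v*}\widetilde{\mathbb{R}}) \to H^{n+1}(\bar{X}_L, \varphi_!\widetilde{\mathbb{R}})$ for these connecting maps. The proof of the previous theorem identifies $H^1(\bar{X}_L, \varphi_!\widetilde{\mathbb{R}}) \simeq (\prod_{v\mid\infty}\mathbb{R})/\mathbb{R}$ via $\delta^0$ (quotient by the diagonal), and $H^2(\bar{X}_L, \varphi_!\widetilde{\mathbb{R}}) \simeq Hom_c((\sum_{v\mid\infty}W_{k(v)})^1, \mathbb{R})$ via $\delta^1$; the latter amounts to the isomorphism of cokernels between $Hom_c(\sum_v W_{k(v)},\mathbb{R})/Hom_c(Pic(\bar{X}),\mathbb{R})$ and $Hom_c((\sum_v W_{k(v)})^1, \mathbb{R})$ obtained by restriction (using that $\sum_v W_{k(v)} \to \mathbb{R}$ splits and that any continuous morphism $Pic(\bar{X}) \to \mathbb{R}$ factors through $Pic(\bar{X}) \to \mathbb{R}$ since $Pic^1(\bar{X})$ is compact). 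Under these identifications, $\delta^1$ sends a family $(\phi_v)_v \in \prod_v Hom_c(W_{k(v)},\mathbb{R})$ to the restriction of $\sum_v \phi_v \circ p_v$ to $(\sum_v W_{k(v)})^1$.

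First I would identify the pullback $i_v^*\theta \in H^1(B_{W_{k(v)}}, \widetilde{\mathbb{R}})$. By its construction in the previous corollary, $\theta = \pi^*\alpha$, where $\pi : \bar{X}_L \to B_{Pic(\bar{X})}$ and $\alpha \in H^1(B_{Pic(\bar{X})}, \widetilde{\mathbb{R}}) = Hom_c(Pic(\bar{X}),\mathbb{R})$ is the canonical morphism. Axiom (8) asserts precisely that $\pi \circ i_v$ is the morphism of classifying topoi induced by the continuous morphism $W_{k(v)} \to Pic(\bar{X})$. Hence $i_v^*\theta$ is the composition $W_{k(v)} \to Pic(\bar{X}) \to \mathbb{R}$, which is exactly the $v$-fundamental class $\theta_v$. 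Consequently $i_\infty^*\theta = (\theta_v)_{v\mid\infty}$ in $H^1(\bar{X}_L, \prod_v i_{v*}\widetilde{\mathbb{R}}) = \prod_v Hom_c(W_{k(v)}, \mathbb{R})$, where $i_\infty = \coprod_{v\mid\infty} i_v$.

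Next I apply the cup-product/connecting-map compatibility: for any $a \in H^0(\bar{X}_L, \prod_v i_{v*}\widetilde{\mathbb{R}})$,
\[
\delta^0(a) \cup \theta \;=\; \delta^1\bigl(a \cup i_\infty^*\theta\bigr).
\]
Testing on the standard basis element $e_v \in \prod_{v\mid\infty}\mathbb{R}$, the cup product in $H^*(B_{W_{k(w)}}, \widetilde{\mathbb{R}})$ of a scalar in $H^0 = \mathbb{R}$ with $\theta_w$ is just scalar multiplication; hence $e_v \cup i_\infty^*\theta$ is concentrated in the $v$-th factor with value $\theta_v$, and applying $\delta^1$ yields exactly $\theta_v \circ p_v \in Hom_c((\sum_w W_{k(w)})^1, \mathbb{R})$. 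Since $\delta^0(e_v)$ is the class of the $v$-th basis vector in $(\prod_w \mathbb{R})/\mathbb{R}$, this establishes the asserted formula on a generating family, and $\mathbb{R}$-linearity of $\cup\theta$ finishes the proof (the index set $X_\infty$ being finite).

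The main technical obstacle is the rigorous justification of the compatibility $\delta^0(a) \cup \theta = \delta^1(a \cup i_\infty^*\theta)$ in this topos-theoretic setting. Since each term of (\ref{exact-sequence-WEsheaves}) is an $\widetilde{\mathbb{R}}$-module and cup product is induced by the multiplication $\widetilde{\mathbb{R}} \otimes \widetilde{\mathbb{R}} \to \widetilde{\mathbb{R}}$, this is formally a consequence of tensoring the short exact sequence with an injective resolution of $\widetilde{\mathbb{R}}$ and applying the snake lemma to the resulting double complex; still, pinning down signs and verifying that $\delta^1$, under the identification of $H^2$, really corresponds to restriction to $(\sum_v W_{k(v)})^1$ is the most delicate piece of bookkeeping.
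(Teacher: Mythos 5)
Your proposal is correct and takes essentially the same route as the paper: both proofs use Axiom (8) to identify $i_v^*\theta$ with the $v$-fundamental class $\theta_v$, and then reduce the cup product with $\theta$ to the collection of cup products $(\cup\theta_v)_{v\mid\infty}$ via the long exact sequence attached to (\ref{exact-sequence-WEsheaves}). The paper simply asserts that $\cup\theta$ is "induced by" $(\cup\theta_v)_v$ and leaves the compatibility of cup products with the connecting homomorphisms implicit; you have spelled out that step in detail, which is a genuine strengthening of exposition rather than a different argument.
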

\begin{proof}
By Axiom (8), the morphism
$$
\appl{H^1(B_{Pic(\bar{X})},\widetilde{\mathbb{R}})=H^1(\bar{X}_{L},\widetilde{\mathbb{R}})}{H^1(\coprod_{v\in X_{\infty}}B_{W_{k(v)}},\widetilde{\mathbb{R}})= \prod_{v\mid\infty}H^1(B_{W_{k(v)}},\widetilde{\mathbb{R}})}{\theta}{(\theta_v)_{v\mid\infty}}
$$
which is induced by $\coprod_{v\in X_{\infty}}B_{W_{k(v)}}\rightarrow\bar{X}_{L}\rightarrow B_{Pic(\bar{X})}$, sends fundamental class to fundamental class. Hence the cup-product morphism $\cup\theta$ is induced by
$$(\cup\theta_v)_{v\mid\infty}:H^0(\bar{X}_{L},i_{\infty*}\widetilde{\mathbb{R}})=\prod_{v\mid\infty}{\mathbb{R}}
\longrightarrow H^1(\bar{X}_{L},i_{\infty*}\widetilde{\mathbb{R}})=\prod_{v\mid\infty} Hom_c(B_{W_{k(v)}},\widetilde{\mathbb{R}})$$
and the result follows. Note that $\cup\theta$ is well defined, since $\cup\theta(\sum_{v\mid\infty} v)=\sum_{v\mid\infty}\theta_v$ is the canonical map $\sum_{v\mid\infty}W_{k(v)}\rightarrow\mathbb{R}$, which vanishes on $(\sum_{v\mid\infty}W_{k(v)})^1$.
\end{proof}

\begin{thm}\label{thm-iso-Rn}
For any $n\geq1$, the morphism
$$R_n:\mathbb{H}^n(\bar{X}_{et},\tau_{\leq 2}R\gamma_*(\varphi_!\mathbb{Z}))\otimes\mathbb{R}\longrightarrow H^n(\bar{X}_{L},\varphi_!\widetilde{\mathbb{R}}),$$
induced by the morphism of sheaves $\varphi_!\mathbb{Z}\rightarrow\varphi_!\widetilde{\mathbb{R}}$, is an isomorphism.
\end{thm}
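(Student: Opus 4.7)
The argument proceeds case by case in $n$. For $n=0$ and $n\geq 4$, both sides vanish by Theorem~\ref{thm-coho-tau-cpct-support-RgZ} and the preceding proposition, so the map is trivially an isomorphism. For $n=3$, the domain $\mu_F^D\otimes\mathbb{R}$ vanishes since $\mu_F^D$ is finite, while $H^3(\bar{X}_L,\varphi_!\widetilde{\mathbb{R}})=0$ by the preceding theorem. So the real content is confined to $n\in\{1,2\}$.

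For $n\leq 2$, the hypercohomology spectral sequence for $\tau_{\leq 2}R\gamma_*$ coincides in total degree $\leq 2$ with that for $R\gamma_*$, giving $\mathbb{H}^n(\bar{X}_{et},\tau_{\leq 2}R\gamma_*(\varphi_!\mathbb{Z}))\cong H^n(\bar{X}_L,\varphi_!\mathbb{Z})$. Under this identification $R_n$ becomes the natural base-change map $H^n(\bar{X}_L,\varphi_!\mathbb{Z})\otimes\mathbb{R}\to H^n(\bar{X}_L,\varphi_!\widetilde{\mathbb{R}})$ induced by $\mathbb{Z}\hookrightarrow\widetilde{\mathbb{R}}$. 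I would then compare the long exact sequences attached to $0\to\varphi_!\mathcal{A}\to\mathcal{A}\to i_{\infty*}\mathcal{A}\to 0$ for $\mathcal{A}=\mathbb{Z}$ and $\mathcal{A}=\widetilde{\mathbb{R}}$. Since $F$ is totally imaginary, every $W_{k(v)}$ with $v\mid\infty$ is isomorphic to $\mathbb{R}$, so $H^1(B_{W_{k(v)}},\mathbb{Z})=0$, and a direct computation using $0\to\mathbb{Z}\to\mathbb{R}\to\mathbb{S}^1\to 0$ (together with the acyclicity of $B_\mathbb{R}$ in $\widetilde{\mathbb{R}}$-coefficients in degree $\geq 2$) yields $H^2(B_{W_{k(v)}},\mathbb{Z})=0$ as well.

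For $n=1$ these vanishings reduce both sides to cokernels of the diagonal embeddings $\mathbb{Z}\hookrightarrow\prod_{v\mid\infty}\mathbb{Z}$ and $\mathbb{R}\hookrightarrow\prod_{v\mid\infty}\mathbb{R}$ respectively, and the exactness of $-\otimes\mathbb{R}$ immediately gives that $R_1$ is an isomorphism. For $n=2$ the same vanishings yield $H^2(\bar{X}_L,\varphi_!\mathbb{Z})\cong H^2(\bar{X}_L,\mathbb{Z})=Pic^1(\bar{X})^D$, while $H^2(\bar{X}_L,\varphi_!\widetilde{\mathbb{R}})=\mathrm{coker}\bigl(\mathrm{Hom}_c(Pic(\bar{X}),\mathbb{R})\to\mathrm{Hom}_c(\oplus_{v\mid\infty}W_{k(v)},\mathbb{R})\bigr)=\mathrm{Hom}_c((\oplus_{v\mid\infty}W_{k(v)})^1,\mathbb{R})$. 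To show that $R_2\colon Pic^1(\bar{X})^D\otimes\mathbb{R}\to\mathrm{Hom}_c((\oplus_{v\mid\infty}W_{k(v)})^1,\mathbb{R})$ is an isomorphism I would invoke the exact sequence $0\to T\to Pic^1(\bar{X})\to Cl(F)\to 0$, where $T=\mathbb{R}^{r_2-1}/\log(\mathcal{O}_F^{\times}/\mu_F)$ is the identity component of $Pic^1(\bar{X})$. Pontryagin dualizing and tensoring with $\mathbb{R}$ gives $Pic^1(\bar{X})^D\otimes\mathbb{R}\cong T^D\otimes\mathbb{R}$, since $Cl(F)^D$ is finite; here $T^D$ is the dual lattice $\log(\mathcal{O}_F^{\times}/\mu_F)^\vee$ inside the $\mathbb{R}$-dual of the vector space $(\oplus_{v\mid\infty}W_{k(v)})^1\cong\mathbb{R}^{r_2-1}$. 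Unwinding the construction of $R_2$, a character of $T$ pulls back through the surjection $(\oplus_{v\mid\infty}W_{k(v)})^1\twoheadrightarrow T$ and lifts uniquely to a linear $\mathbb{R}$-valued form on this vector space; this coincides with the inclusion of the dual lattice, which becomes the identity $\mathbb{R}^{r_2-1}\xrightarrow{\sim}\mathbb{R}^{r_2-1}$ after tensoring with $\mathbb{R}$.

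The main obstacle is the precise identification in the $n=2$ case: verifying that the abstract map $R_2$ arising from the comparison of long exact sequences really agrees with the concrete ``dual lattice inclusion'' description above. This amounts to tracing connecting homomorphisms through several commutative diagrams involving Pontryagin duality and Axiom~(3), which identifies $\pi_1(\bar{X}_L)^{ab}$ with $Pic(\bar{X})$ and so controls the map $(\oplus_{v\mid\infty}W_{k(v)})\to Pic(\bar{X})$ underlying the whole computation.
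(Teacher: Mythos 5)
Your proposal is correct and follows essentially the same route as the paper: dispatch $n=0,3,\geq4$ by vanishing, identify $\mathbb{H}^n(\tau_{\leq 2}R\gamma_*(\varphi_!\mathbb{Z}))$ with $H^n(\bar{X}_L,\varphi_!\mathbb{Z})$ in low degrees, compare the two long exact sequences from the open/closed decomposition, and for $n=2$ use Axiom (8) to identify $R_2$ with (the dual of) the canonical map $(\oplus_{v\mid\infty}W_{k(v)})^1\to Pic^1(\bar{X})$ and then exploit the extension $0\to\mathbb{R}^{r_2-1}/\log(\mathcal{O}_F^\times/\mu_F)\to Pic^1(\bar{X})\to Cl(F)\to 0$ to conclude after tensoring with $\mathbb{R}$. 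The only cosmetic difference is that the paper routes the $n=2$ identification through $H^1(-,\widetilde{\mathbb{S}}^1)$ and $Hom_c(Pic^1(\bar{X}),\mathbb{S}^1)$ before applying Pontryagin duality, while you work directly with $Pic^1(\bar{X})^D$ and the dual-lattice description; these are equivalent, and both hinge on Axiom (8) controlling the map $\oplus_{v\mid\infty}W_{k(v)}\to Pic(\bar{X})$, which is exactly the point you flag as the remaining verification.
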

\begin{proof}
We denote by
$$\kappa_n:\mathbb{H}^n(\bar{X}_{et},\tau_{\leq 2}R\gamma_*(\varphi_!\mathbb{Z}))\longrightarrow H^1(\bar{X}_{L},\varphi_!\widetilde{\mathbb{R}})$$
the morphism induced by $\varphi_!\mathbb{Z}\rightarrow\varphi_!\widetilde{\mathbb{R}}$.
The result is obvious for $n\neq1,2$. The result is also clear for $n=1$, since $\kappa_1$ is canonically identified with
$$H^0(\coprod_{v\in X_{\infty}}B_{W_{k(v)}},\mathbb{Z})/H^0(\bar{X}_{L},\mathbb{Z})\rightarrow H^0(\coprod_{v\in X_{\infty}}B_{W_{k(v)}},\widetilde{\mathbb{R}})/H^0(\bar{X}_{L},\widetilde{\mathbb{R}})$$
hence $R_1$ is the identity. Assume that $n=2$. On the one hand, we have canonically
$$\mathbb{H}^2(\bar{X}_{et},\tau_{\leq 2}R\gamma_*(\varphi_!\mathbb{Z}))=H^2(\bar{X}_{L},\mathbb{Z})= H^1(\bar{X}_{L},\widetilde{\mathbb{S}}^1)/H^1(\bar{X}_{L},\widetilde{\mathbb{R}})$$
On the other hand, we have
$$H^2(\bar{X}_{L},\varphi_!\widetilde{\mathbb{R}})=
H^1(\coprod_{v\mid\infty}B_{W_{k(v)}},\widetilde{\mathbb{R}})/H^1(\bar{X}_{L},\widetilde{\mathbb{R}})=
H^1(\coprod_{v\mid\infty}B_{W_{k(v)}},\widetilde{\mathbb{S}}^1)/H^1(\bar{X}_{L},\widetilde{\mathbb{R}}).$$
and the map $R_2$ is induced by $$H^1(\bar{X}_{L},\widetilde{\mathbb{S}}^1)=Hom_c(Pic(\bar{X}),{\mathbb{S}}^1)\longrightarrow H^1(\coprod_{v\mid\infty}B_{W_{k(v)}},\widetilde{\mathbb{S}}^1)=Hom_c(\sum_{v\mid\infty}W_{k(v)},\mathbb{S}^1)$$
which is in turn induced by the canonical morphism $\sum_{v\mid\infty}W_{k(v)}\rightarrow Pic(\bar{X})$, as it follows from Axiom (8).
We have the exact sequence $$Hom_c(Pic(\bar{X}),\mathbb{R})\rightarrow Hom_c(Pic(\bar{X}),{\mathbb{S}}^1)\rightarrow Hom_c(Pic^1(\bar{X}),{\mathbb{S}}^1)\rightarrow 0$$ hence $\kappa_2$ is the morphism
$$\kappa_2:Hom_c(Pic^1(\bar{X}),{\mathbb{S}}^1)\longrightarrow Hom((\oplus_{v\mid\infty}W_{k(v)})^1,\mathbb{S}^1)=Hom((\oplus_{v\mid\infty}W_{k(v)})^1,\mathbb{R}),$$
where the first map is the Pontryagin dual of $(\oplus_{v\mid\infty}W_{k(v)})^1\rightarrow Pic^1(\bar{X})$.
Recall that we have the exact sequence of topological groups
$$0\rightarrow (\oplus_{v\mid\infty}W_{k(v)})^1/(\mathcal{O}_F^{\times}/\mu_F)\rightarrow Pic^1(\bar{X})\rightarrow Cl(F)\rightarrow0,$$
Then it is straightforward to check that there is a canonical identification
$$\mathbb{H}^2(\bar{X}_{et},\tau_{\leq 2}R\gamma_*(\varphi_!\mathbb{Z}))\otimes\mathbb{R}=Hom(\mathcal{O}_F^{\times},\mathbb{R})$$
and that the map $R_2$ is the morphism
$$R_2:Hom(\mathcal{O}_F^{\times},\mathbb{R})\longrightarrow Hom_c((\oplus_{v\mid\infty}W_{k(v)})^1,\mathbb{R})$$
which is the inverse of the isomorphism induced by the natural map
$\mathcal{O}_F^{\times}\rightarrow (\oplus_{v\mid\infty}W_{k(v)})^1$. In other words
$$R^{-1}_2:Hom_c((\oplus_{v\mid\infty}W_{k(v)})^1,\mathbb{R})\longrightarrow Hom(\mathcal{O}_F^{\times},\mathbb{R})$$
is induced by the natural map
$\mathcal{O}_F^{\times}\rightarrow (\oplus_{v\mid\infty}W_{k(v)})^1$.
\end{proof}
The morphisms $\cup\theta$, $R_1$ and $R_2$ have been made explicit during the proof of Theorem \ref{thm-cup-theta-explicit} and in Theorem \ref{thm-iso-Rn}. The following result is immediately checked.
\begin{cor}\label{cor-cuptheta}
We have a commutative diagram
\[ \xymatrix{
(\sum_{v\mid\infty}\mathbb{R})/\mathbb{R}\ar[r]^{D}\ar[d]^{R_1}&
Hom(\mathcal{O}_F^{\times},\mathbb{R})\ar[d]^{R_2}\\
(\sum_{v\mid\infty}\mathbb{R})/\mathbb{R}\ar[r]^{\cup\theta\,\,\,\,\,\,\,\,\,\,\,\,\,\,\,\,\,\,\,\,}
&Hom_c((\oplus_{v\mid\infty}W_{k(v)})^1,\mathbb{R})
}\]
where $D$ is the transpose of the usual regulator map $$\mathcal{O}_F^{\times}\otimes\mathbb{R}\longrightarrow (\sum_{v\mid\infty}\mathbb{R})^+.$$
\end{cor}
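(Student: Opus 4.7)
The plan is a direct verification using the explicit formulas for the four maps already established in the proof of Theorem \ref{thm-iso-Rn} and in Theorem \ref{thm-cup-theta-explicit}. Since $R_1$ was shown to be literally the identity map on $(\sum_{v\mid\infty}\mathbb{R})/\mathbb{R}$, commutativity reduces to the equality $R_2\circ D=\cup\theta$ as maps from $(\sum_{v\mid\infty}\mathbb{R})/\mathbb{R}$ to $Hom_c((\oplus_{v\mid\infty}W_{k(v)})^1,\mathbb{R})$.

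First I would evaluate the ``right then down'' composite on a class $\bar a=(a_v)_v$. By the definition of the regulator, $D(\bar a)$ is the linear form $u\mapsto \sum_v a_v\log|u|_v$ on $\mathcal{O}_F^{\times}$. The map $R_2$ was identified in the proof of Theorem \ref{thm-iso-Rn} with the inverse of the isomorphism induced by the natural map $\iota\colon\mathcal{O}_F^{\times}\otimes\mathbb{R}\stackrel{\sim}{\longrightarrow}(\oplus_{v\mid\infty}W_{k(v)})^1$. Hence $R_2(D(\bar a))(b)=D(\bar a)(\iota^{-1}(b))$ for any $b\in(\oplus_{v\mid\infty}W_{k(v)})^1$.

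Second I would evaluate the ``down then right'' composite. Theorem \ref{thm-cup-theta-explicit} gives $(\cup\theta)(\bar a)=\sum_v a_v(\theta_v\circ p_v)$, i.e. the functional $b\mapsto \sum_v a_v\theta_v(p_v(b))$.

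To conclude, I would observe the key compatibility: for a unit $u\in\mathcal{O}_F^{\times}$, the $v$-component of $\iota(u)\in W_{k(v)}=F_v^{\times}/\mathcal{O}_{F_v}^{\times}$ is the class of $u$, and $\theta_v$ is, by construction, the canonical morphism $F_v^{\times}/\mathcal{O}_{F_v}^{\times}\to\mathbb{R}$ sending this class to $\log|u|_v$. Consequently, writing $b=(b_v)_v$ under the identification $(\theta_v)_v\colon(\oplus_{v\mid\infty}W_{k(v)})^1\simeq(\sum_v\mathbb{R})^+$, we have $\log|\iota^{-1}(b)|_v=b_v=\theta_v(p_v(b))$. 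Both composites therefore produce the functional $b\mapsto\sum_v a_v b_v$, and commutativity follows. There is no genuine obstacle: the statement is essentially a bookkeeping check once $\theta_v$ has been recognized as the $v$-coordinate of the logarithmic embedding, which is built into the identification used to define $R_2$.
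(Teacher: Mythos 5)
Your proposal is correct and takes exactly the approach the paper intends: the paper declines to spell out a proof, saying only that the result "is immediately checked" from the explicit descriptions of $R_1$, $R_2$, and $\cup\theta$ obtained in Theorems \ref{thm-iso-Rn} and \ref{thm-cup-theta-explicit}, and your computation is precisely that check. Reducing to $R_2\circ D=\cup\theta$ via $R_1=\mathrm{id}$, unwinding $R_2$ through $\iota^{-1}$, and matching $\theta_v\circ p_v$ with the $v$-th logarithmic coordinate is the intended bookkeeping.
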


We denote by $\varphi:X_{L}\rightarrow \bar{X}_{L}$ the natural open embedding, and by $H_c^n(X_{L},\mathcal{A}):=H^n(\bar{X}_{L},\varphi_!\mathcal{A})$ the cohomology with compact support with coefficients in the abelian sheaf $\mathcal{A}$.
\begin{thm}\label{thm-Lichtenbaum-conj}\textbf{\emph{(Lichtenbaum's formalism)}}
Assume that $F$ is totally imaginary. Let $\bar{X}_{L}$ be any topos satisfying Axioms $(1)-(9)$ above. We denote by $\tau_{\leq2}R\gamma_*$ the truncated functor of the total derived functor $R\gamma_*$, where $\gamma$ is the morphism given by Property (1). Then the following is true:
\begin{itemize}
\item $\mathbb{H}^n(\bar{X}_{et},\tau_{\leq2}R\gamma_*(\varphi_!\mathbb{Z}))$ is finitely generated and zero for $n\geq4$.
\item The canonical map $$\mathbb{H}^n(\bar{X}_{et},\tau_{\leq2}R\gamma_*(\varphi_!\mathbb{Z}))\otimes\mathbb{R}\longrightarrow H_c^n(X_{L},\widetilde{\mathbb{R}})$$
is an isomorphism for any $n\geq0$.
\item There exists a fundamental class $\theta\in{H}^1(\bar{X}_{L},\widetilde{\mathbb{R}})$. The complex of finite dimensional vector spaces
    $$...\rightarrow{H}_c^{n-1}(X_{L},\widetilde{\mathbb{R}})\rightarrow {H}_c^n(X_{L},\widetilde{\mathbb{R}})\rightarrow {H}_c^{n+1}(X_{L},\widetilde{\mathbb{R}})\rightarrow...$$
    defined by cup product with $\theta$, is acyclic.
\item The vanishing order of the Dedekind zeta function $\zeta_F(s)$ at $s=0$ is given by
 $$\textsl{ord}_{s=0}\zeta_F(s)=\sum_{n\geq0}(-1)^n\,n\,\textsl{dim}_{\mathbb{R}}\,H_c^n(X_{L},\widetilde{\mathbb{R}})$$
\item The leading term coefficient $\zeta^*_F(s)$ at $s=0$ is given by the Lichtenbaum Euler characteristic :
$$\zeta^*_F(s)=\pm\prod_{n\geq0}|\mathbb{H}^n(\bar{X}_{et},\tau_{\leq2}R\gamma_*(\varphi_!\mathbb{Z}))_{\textsl{tors}}|^{(-1)^n}/
\textsl{det}(H_c^n(X_{L},\widetilde{\mathbb{R}}),\theta,B^*)$$
where $B^n$ is a basis of $\mathbb{H}^n(\bar{X}_{et},\tau_{\leq2}R\gamma_*(\varphi_!\mathbb{Z}))/\textsl{tors}$.
\end{itemize}
In particular those results hold for the Weil-\'etale topos $\bar{X}_W$ defined in Section 6.
\end{thm}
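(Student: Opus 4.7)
The plan is to synthesize the cohomology calculations of the preceding subsection (Theorems \ref{thm-coho-tau-cpct-support-RgZ} and \ref{thm-iso-Rn}, and Corollary \ref{cor-cuptheta}) with the classical analytic class number formula $\zeta_F^*(0) = -h_F R_F/w_F$.

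The first bullet is immediate from Theorem \ref{thm-coho-tau-cpct-support-RgZ}: $(\prod_{X_\infty} \mathbb{Z})/\mathbb{Z}$ is free of rank $r_2-1$, $\mu_F^D$ is finite, and Pontryagin-dualizing the extension $0 \to T \to Pic^1(\bar{X}) \to Cl(F) \to 0$ (with $T$ a compact torus of dimension $r_2-1$) shows $Pic^1(\bar{X})^D$ is finitely generated with torsion $Cl(F)^D$ of order $h_F$; vanishing for $n \geq 4$ is part of the same theorem. The second bullet is Theorem \ref{thm-iso-Rn} directly. For acyclicity, the only possibly nontrivial segment of the complex is $H_c^1 \xrightarrow{\cup \theta} H_c^2$, and Corollary \ref{cor-cuptheta} identifies this, via $R_1$ and $R_2$, with the transpose $D$ of the regulator map $\mathcal{O}_F^\times \otimes \mathbb{R} \to (\sum_{v\mid\infty}\mathbb{R})^+$, which is an isomorphism by Dirichlet's unit theorem.

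For the last two bullets, the order of vanishing is computed directly: $\sum_n (-1)^n n \dim H_c^n = -(r_2-1) + 2(r_2-1) = r_2 - 1 = \mathrm{ord}_{s=0}\zeta_F(s)$ (since $F$ is totally imaginary). The torsion contribution to the Lichtenbaum Euler characteristic is $\prod_n |\mathbb{H}^n_{\mathrm{tors}}|^{(-1)^n} = h_F/w_F$. To evaluate $\det(H_c^n, \theta, B^*)$ one traces integral structures: the basis $B^1$ of $(\prod_{X_\infty}\mathbb{Z})/\mathbb{Z}$ maps to the standard integral basis of $(\sum_{v\mid\infty}\mathbb{R})/\mathbb{R}$, while the basis $B^2$ of $Pic^1(\bar{X})^D/\mathrm{tors} \cong T^D$ corresponds to the dual lattice of $\log(\mathcal{O}_F^\times/\mu_F)$; computing the determinant of $D$ in these dual bases yields $R_F$ up to sign, so the formula produces $\pm h_F R_F/w_F$, matching the analytic class number formula. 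The final sentence of the theorem (applicability to $\bar{X}_W$) is then automatic, since by \cite{Fundamental-group-II} the Weil-\'etale topos satisfies Axioms $(1)$--$(9)$.

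The principal obstacle lies in this last step, namely the bookkeeping of integral structures and sign conventions: one must verify that $R_1$ and $R_2$ transport the natural integral lattices on $\mathbb{H}^n_{\mathrm{free}}$ to the lattices paired by the classical regulator, and that the normalization of $\det$ in Lichtenbaum's formula produces $R_F^{\pm 1}$ in the correct position so that the final expression reads $\pm h_F R_F/w_F$ rather than its reciprocal.
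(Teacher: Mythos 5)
Your proposal is correct and follows essentially the same route as the paper: the paper's own proof is a one-line citation of Theorem \ref{thm-coho-tau-cpct-support-RgZ}, Theorem \ref{thm-cup-theta-explicit}, Theorem \ref{thm-iso-Rn}, Corollary \ref{cor-cuptheta}, and the analytic class number formula, which is exactly the synthesis you carry out (your numerical computations $h_F/w_F$ for the torsion, $r_2-1$ for the order of vanishing, and the identification of $\cup\theta$ with the regulator transpose via $R_1,R_2$ are the content of those references). Your closing caveat about integral-structure and sign bookkeeping is a fair description of what the cited Corollary \ref{cor-cuptheta} is actually tracking.
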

\begin{proof}This follows from Theorem \ref{thm-coho-tau-cpct-support-RgZ}, Theorem \ref{thm-cup-theta-explicit}, Theorem \ref{thm-iso-Rn}, Corollary \ref{cor-cuptheta} and from the analytic class number formula.
\end{proof}

\subsection{The sheaf $R^2\gamma_*\mathbb{Z}$}\label{sect-R2Z}

The étale sheaf $R^2\gamma_*\mathbb{Z}$ is the sheaf
associated to the presheaf
$$
\fonc{\mathcal{P}^2\mathbb{Z}}{Et_{\bar{X}}}{\underline{Ab}}{\bar{U}}
{(C^1_{\bar{U}})^{D} }.$$
Recall that if $\bar{U}$ is connected of function field $K(\bar{U})$, then
$C_{\bar{U}}$ is the $S$-idèle class group of $K(\bar{U})$, where
$S$ is the set of places of $K(\bar{U})$ not corresponding to a
point of $\bar{U}$. In other words, if one sets
$K=K(U)$ then $C_{\bar{U}}=C_{K,S}$ is the $S$-idele class group of $K$ defined by the exact sequence
$$\prod_{v\in U}\mathcal{O}_{K_v}^{\times}\rightarrow C_K\rightarrow C_{K,S}\rightarrow0.$$
The compact group $C^1_{\bar{U}}$
is then defined as the kernel of the canonical map $C_{\bar{U}}\rightarrow\mathbb{R}^{\times}$.
Note that such the finite set $S$ does not
necessarily contain all the archimedean places. The restrictions maps
of the presheaf $\mathcal{P}^2\mathbb{Z}$ are induced by the
canonical maps $C_{\bar{V}}\rightarrow C_{\bar{U}}$ (well defined
for any étale map $\bar{V}\rightarrow \bar{U}$ of connected étale
$\bar{X}$-schemes). By class field theory, one has a
covariantly functorial exact sequence of compact topological groups
$$0\rightarrow D^1_{\bar{U}}\rightarrow C^1_{\bar{U}}\rightarrow\pi_1(\bar{U}_{et})^{ab}\rightarrow0$$
where $\pi_1(\bar{U}_{et})^{ab}$ is the abelian étale fundamental group
of $\bar{U}$ and $D^1_{\bar{U}}$ is the connected component of $1$
in $C^1_{\bar{U}}$. Here $\pi_1(\bar{U})^{ab}$ is defined as the
abelianization of the profinite fundamental group of the
Artin-Verdier étale topos
$\bar{X}_{et}/{y\bar{U}}\simeq\bar{U}_{et}$. If we denote the
function field of $\bar{U}$ by $K(\bar{U})$ then this group is just
the Galois group of the maximal abelian extension of $K(\bar{U})$
unramified at every place of $K(\bar{U})$ corresponding to a point
of $\bar{U}$.

By Pontryagin duality, we obtain a contravariantly functorial
exact sequence of discrete abelian groups
\begin{equation}\label{exact-sequence-presheaves-R2Z}
0\rightarrow \pi_1^{ab}(\bar{U}_{et})^{D}\rightarrow
(C^1_{\bar{U}})^{D}\rightarrow
(D^1_{\bar{U}})^{D}\rightarrow0,
\end{equation}
i.e. an exact sequence of abelian étale presheaves on $\bar{X}$. On
the one hand, the sheaf associated to the presheaf
$$
\appl{Et_{\bar{X}}}{\underline{Ab}}{\bar{U}}
{\pi_1^{ab}(\bar{U}_{et})^{D}=H^2(\bar{U}_{et},\mathbb{Z})}$$
vanishes and the associated sheaf functor is exact on the other.
Therefore, the exact sequence (\ref{exact-sequence-presheaves-R2Z})
shows that $R^2\mathbb{Z}$ is the sheaf associated to the presheaf
$$
\fonc{P^2\mathbb{Z}}{Et_{\bar{X}}}{\underline{Ab}}{\bar{U}}
{(D^1_{\bar{U}})^{D} }.$$
The structure of the connected component $D^1_{\bar{U}}$ of the S-id\`ele class group $C^1_{\bar{U}}$ is not known in general.

\subsubsection{}
We consider the following open subscheme of $\bar{X}$
$$Y:=(X,X(\mathbb{R})).$$
Let $\bar{U}\rightarrow Y$ be a connected \'etale $Y$-scheme with function field $K:=K(\bar{U})$. Note that $U_{\infty}$ contains only real places. If $v$ is a real place of $K$, then we denote by $\mathcal{O}_{K_v}^{\times}=\pm1$ the kernel of the valuation ${K_v}^{\times}\rightarrow\mathbb{R}_{>0}$. The Leray spectral sequence associated to the morphism $Spec(K)_{et}\rightarrow \bar{U}_{et}$ gives an exact sequence
\begin{equation}\label{exact-sequence-for-etalecohomology}
0\rightarrow H^2(\bar{U}_{et},\mathbb{Z})\rightarrow H^2(G_K,\mathbb{Z})\rightarrow \sum_{v\in\bar{U}^0} H^2(I_v,\mathbb{Z})\rightarrow H^3(\bar{U}_{et},\mathbb{Z})\rightarrow H^3(G_K,\mathbb{Z})=0
\end{equation}
which is functorial in $\bar{U}$, with respect to the natural morphisms between Galois groups.  We have the following canonical identifications $$H^2(\bar{U}_{et},\mathbb{Z})=\pi_1(\bar{U}_{et})^D,\,\,\,H^2(G_K,\mathbb{Z})=G_K^D,\,\,\,H^2(I_v,\mathbb{Z})=I_v^D.$$ Then the central map of the exact sequence (\ref{exact-sequence-for-etalecohomology}) is induced by the natural maps $I_v^{ab}\rightarrow G_K^{ab}$, and the first map is given by the natural surjection $G_K^{ab}\rightarrow\pi_1(\bar{U}_{et})^{ab}$. Then global and local class field theory give the exact sequence
$$0\rightarrow\pi_1(\bar{U}_{et})^D\rightarrow (C_K/D_K)^D\rightarrow \sum_{v\in\bar{U}^0}(\mathcal{O}_{K_v}^{\times})^D\rightarrow H^3(\bar{U}_{et},\mathbb{Z})\rightarrow0$$
Here the functoriality is given by the norm maps, and $(C_K/D_K)^D\rightarrow \sum_{v\in\bar{U}^0}(\mathcal{O}_{K_v}^{\times})^D$ is the dual of the canonical map
$$\rho_{\bar{U}}:\prod_{v\in\bar{U}^0}\mathcal{O}_{K_v}^{\times}\longrightarrow C_K/D_K.$$ We obtain an isomorphism
$$H^3(\bar{U}_{et},\mathbb{Z})\simeq Ker(\rho_{\bar{U}})^D$$
which is functorial with respect to the presheaf structure on $H^3(-,\mathbb{Z})$ and with the norm maps on the right hand side.
Note that $$Ker(\rho_{\bar{U}})=\prod_{v\in\bar{U}^0}\mathcal{O}_{K_v}^{\times}\cap D_K=\prod_{v\in\bar{U}^0}\mathcal{O}_{K_v}^{\times}\cap D^1_K\subset C_K$$

\subsubsection{}For any connected \'etale scheme $\bar{U}$ over $Y=(X,X(\mathbb{R}))$, we have a functorial exact sequence of compact abelian topological groups
$$0\rightarrow \prod_{v\in \bar{U}^0}\mathcal{O}_{K(\bar{U})_v}^{\times}\cap D^1_{K(\bar{U})}\rightarrow D^1_{K(\bar{U})}\rightarrow D^1_{\bar{U}}\rightarrow 0$$
hence a contravariantly functorial exact sequence of discrete abelian groups
$$0\rightarrow (D^1_{\bar{U}})^D\rightarrow (D^1_{K(\bar{U})})^D\rightarrow (\prod_{v\in\bar{U}^0}\mathcal{O}_{K(\bar{U})_v}^{\times}\cap D^1_{K(\bar{U})})^D=H^3(\bar{U}_{et},\mathbb{Z}) \rightarrow 0$$
But the sheaf associated to the presheaf
$$
\appl{Et_{Y}}{\underline{Ab}}{\bar{U}}
{H^3(\bar{U}_{et},\mathbb{Z})}$$
vanishes. It follows that the sheaf $R^2\gamma_*\mathbb{Z}$ restricted to $Y_{et}$ is the sheaf associated to the presheaf
$$
\fonc{P}{Et_{Y}}{\underline{Ab}}{\bar{U}}
{(D^1_{K(\bar{U})})^{D} }$$
Let $\xi:Spec(F)_{et}\rightarrow Y_{et}$ be the morphism induced by the inclusion of the generic point. Let $\mathcal{F}$ the presheaf on $Et_{Spec(F)}$ sending a finite extension $K/F$ to $(D^1_{K})^D$.
By Tate's theorem, one has a functorial isomorphism of compact groups
$$D^1_{K}=(\mathbb{V}\otimes_{\mathbb{Z}}\mathcal{O}_K^{\times})\times(\prod_{r_2(K)}\mathbb{S}^1)$$
where $\mathbb{V}=\mathbb{Q}^D$ is the solenoid and $r_2(K)$ is the set of complex places of $K$. We obtain a functorial isomorphism of abelian groups
$$(D^1_{K})^D\simeq Hom(\mathcal{O}_K^{\times},\mathbb{Q})\oplus(\oplus_{r_2(K)}\mathbb{Z}).$$
It follows that $(D^1_{K})^D$ satisfies Galois descent hence $\mathcal{F}$ is a sheaf on the \'etale site of $Spec(F)$, and $P=\xi_*\mathcal{F}$ is a sheaf on $Et_Y$. We obtain the following description of $R^2\gamma_*\mathbb{Z}|Y$.
\begin{prop}\label{prop-R2Z-restricted-toY}
For any $\bar{U}$ connected \'etale over $Y=(X,X(\mathbb{R}))$, one has
$$R^2\gamma_*\mathbb{Z}(\bar{U})=(D^1_{K(\bar{U})})^{D}\simeq Hom(\mathcal{O}_{K(\bar{U})}^{\times},\mathbb{Q})\oplus(\sum_{r_2(K(\bar{U}))}\mathbb{Z}).$$
\end{prop}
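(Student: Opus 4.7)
The plan is to complete the argument sketched in the discussion immediately preceding the proposition by verifying two claims: first, that the formula $(D^1_K)^D \simeq Hom(\mathcal{O}_K^\times,\mathbb{Q}) \oplus \bigl(\bigoplus_{r_2(K)} \mathbb{Z}\bigr)$ follows from Tate's structure theorem and Pontryagin duality; second, that the presheaf $P: \bar{U}\mapsto (D^1_{K(\bar{U})})^D$ on $Et_Y$ is already a sheaf, since it coincides with $\xi_*\mathcal{F}$ where $\mathcal{F}: K \mapsto (D^1_K)^D$ satisfies Galois descent. Granted these, the identification of $R^2\gamma_*\mathbb{Z}|Y$ with the sheafification of $P$ (established in the paragraphs just before the statement) gives $R^2\gamma_*\mathbb{Z}(\bar{U}) = P(\bar{U}) = (D^1_{K(\bar{U})})^D$, and the explicit formula follows.

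For the explicit description, I would apply Pontryagin duality to the Tate decomposition $D^1_K \simeq (\mathbb{V}\otimes_{\mathbb{Z}}\mathcal{O}_K^\times) \times \prod_{r_2(K)} \mathbb{S}^1$. Since $\mathbb{V}^D = \mathbb{Q}$ and $(\mathbb{S}^1)^D = \mathbb{Z}$, and Pontryagin duality converts a product of compact groups into a direct sum of discrete groups while sending tensor with the finitely generated group $\mathcal{O}_K^\times$ to $Hom$, this yields $(D^1_K)^D \simeq Hom(\mathcal{O}_K^\times,\mathbb{Q}) \oplus \bigl(\bigoplus_{r_2(K)} \mathbb{Z}\bigr)$ functorially with respect to the norm maps. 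Note that $Hom(\mathcal{O}_K^\times,\mathbb{Q}) = Hom(\mathcal{O}_K^\times/\mu_K,\mathbb{Q})$ since $\mathbb{Q}$ is torsion-free.

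To verify Galois descent for $\mathcal{F}$, given a finite Galois extension $K'/K$ with group $G$, I would check $((D^1_{K'})^D)^G \simeq (D^1_K)^D$ termwise in the Tate decomposition. The units part reduces to $Hom(\mathcal{O}_{K'}^\times,\mathbb{Q})^G \simeq Hom(\mathcal{O}_K^\times,\mathbb{Q})$, which is a standard consequence of Dirichlet's unit theorem and averaging over $G$: the norm identifies $(\mathcal{O}_{K'}^\times\otimes\mathbb{Q})_G$ with $\mathcal{O}_K^\times\otimes\mathbb{Q}$. The archimedean part reduces to a permutation-module calculation, using that the covariant structure on $\bigoplus_{r_2} \mathbb{Z}$ (coming from the norm on $D^1$) has dual $G$-action summing Galois conjugates; the $G$-orbits on $r_2(K')$ must be analyzed carefully, especially those coming from real places of $K$ that become complex in $K'$, but after taking invariants of the dual one recovers precisely one $\mathbb{Z}$ per complex place of $K$.

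The main obstacle is the bookkeeping for the archimedean component under Galois descent, particularly the interaction between the covariant norm map $D^1_{K'}\to D^1_K$ and its Pontryagin dual, which reverses variance and mixes the contributions of ramifying real places. Once this check is completed, $\mathcal{F}$ is a sheaf on $Et_{Spec(F)}$, its pushforward $P = \xi_*\mathcal{F}$ is automatically a sheaf on $Et_Y$, so the sheafification step in the construction of $R^2\gamma_*\mathbb{Z}|Y$ is trivial, and combining with the Tate formula yields the proposition.
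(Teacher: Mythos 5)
Your proposal tracks the paper's own reasoning exactly: show $R^2\gamma_*\mathbb{Z}|_Y$ is the sheafification of $P = \xi_*\mathcal{F}$, reduce to Galois descent for $\mathcal{F}$, and read off the explicit formula from Tate's decomposition of $D^1_K$ via Pontryagin duality — this is precisely what the text preceding the proposition does, and like the paper you leave the descent verification as a sketch. One clarification on the archimedean bookkeeping you flag as the main obstacle: $\bigoplus_{r_2(K')}\mathbb{Z}$ is not a pure permutation $G$-module; at a real place $v$ of $K$ that becomes complex in $K'$, the decomposition group $D_w\cong\mathbb{Z}/2\mathbb{Z}$ acts on $\mathcal{O}^{\times}_{K'_w}\cong\mathbb{S}^1$ by complex conjugation (hence on the dual $\mathbb{Z}$ by $-1$), so the corresponding orbit contributes $\bigl(\mathrm{Ind}_{D_w}^{G}\,\mathbb{Z}_{-}\bigr)^{G}\cong(\mathbb{Z}_{-})^{D_w}=0$ to the invariants, which is exactly why only the genuinely complex places of $K$ survive.
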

Let $\phi:Y_{et}\rightarrow\bar{X}_{et}$ be the open immersion. Consider the adjunction map
\begin{equation}\label{une-ptite-adjunction}
R^2\gamma_*\mathbb{Z}\longrightarrow \phi_*\phi^*R^2\gamma_*\mathbb{Z}.
\end{equation}
For any $\bar{U}$ connected and \'etale over $\bar{X}$, the map $D^1_{K(\bar{U})}\rightarrow D^1_{\bar{U}}$ is surjective hence the induced map
$${P}^2\mathbb{Z}(\bar{U})=(D^1_{\bar{U}})^{D}\longrightarrow\phi_*\phi^*R^2\gamma_*\mathbb{Z}(\bar{U})
=R^2\gamma_*\mathbb{Z}(\bar{U}\times_{\bar{X}}Y)=(D^1_{K(\bar{U})})^{D}$$ is injective. Applying the (exact) associated functor, we see that the adjunction map (\ref{une-ptite-adjunction})
is also injective.

Recall that $R^2\gamma_*\mathbb{Z}$ is the sheaf associated to the presheaf on $Et_{\bar{X}}$ defined by
$\mathcal{P}^2\mathbb{Z}(\bar{U})=(C^1_{\bar{U}})^{D}$, and consider the presheaf
$$
\fonc{\phi_p\phi^p\mathcal{P}^2\mathbb{Z}}{Et_{\bar{X}}}{\underline{Ab}}{\bar{U}}
{(C^1_{\bar{U}\times_{\bar{X}}Y})^{D}}$$
For any $\bar{U}$ connected and \'etale over $\bar{X}$ and such that $\bar{U}$ does not contain all the places of $K(\bar{U})$, we have an exact sequence
$$0\rightarrow \prod_{U_{\infty}-U(\mathbb{R})}\mathbb{S}^1\rightarrow C^1_{\bar{U}\times_{\bar{X}}Y}\rightarrow  C^1_{\bar{U}}\rightarrow0 $$
inducing an exact sequence of discrete abelian groups
$$0\rightarrow (C^1_{\bar{U}})^D\rightarrow (C^1_{\bar{U}\times_{\bar{X}}Y})^D\rightarrow\prod_{U_{\infty}-U(\mathbb{R})}\mathbb{Z}\rightarrow0.$$
In other words, we have an exact sequence of presheaves on ${Et}_{\bar{X}}'$
$$0\rightarrow\mathcal{P}^2\mathbb{Z}\rightarrow \phi_p\phi^p\mathcal{P}^2\mathbb{Z}\rightarrow\prod_{v\in\bar{X}-Y}u_{v*}\mathbb{Z}\rightarrow0$$
where $Et_{\bar{X}}'$ is the full subcategory of $Et_{\bar{X}}$ consisting of connected objects $\bar{U}$ such that $\bar{U}$ does not contain all the places of $K(\bar{U})$, and the adjoint functors $\phi^p$ and $\phi_p$ are functors between categories of presheaves. But $Et_{\bar{X}}'$ is a topologically generating full subcategory of the \'etale site ${Et}_{\bar{X}}$. Applying the associated sheaf functor, we get an exact sequence of sheaves
\begin{equation}\label{exact-sequ-for-R2Z}
0\rightarrow R^2\gamma_*\mathbb{Z}\rightarrow \phi_*\phi^*R^2\gamma_*\mathbb{Z}\rightarrow\prod_{v\in\bar{X}-Y}u_{v*}\mathbb{Z}\rightarrow0
\end{equation}
since the sheaf associated to $\phi_p\phi^p\mathcal{P}^2\mathbb{Z}$ is just $\phi_*\phi^*R^2\gamma_*\mathbb{Z}$. In order to check this last claim, we consider the open-closed decomposition
$$\phi: Y_{et}\longrightarrow \bar{X}_{et}\longleftarrow \coprod_{v\in \bar{X}-Y}\underline{Sets}:u $$
where the gluing functor $u^*\phi_*$ sends a sheaf $\mathcal{F}$ on $Y_{et}$ to the collection of the stalks $(\mathcal{F}_v)_{v\in\bar{X}-Y}$ (here $\mathcal{F}_v$ is the stalk of $\mathcal{F}$ at the geometric point $v:Spec(\mathbb{C})\rightarrow Y$). It follows easily that
$a(\phi_pP)=\phi_*a(P)$ for any presheaf $P$ on $Y$, where $a$ denotes the associated sheaf functor. Hence we have
$$a(\phi_p\phi^p\mathcal{P}^2\mathbb{Z})\simeq \phi_*a(\phi^p\mathcal{P}^2\mathbb{Z})\simeq
\phi_*\phi^*a(\mathcal{P}^2\mathbb{Z})\simeq \phi_*\phi^*R^2\gamma_*\mathbb{Z}.$$
In view of Proposition \ref{prop-R2Z-restricted-toY}, we obtain the following result, where $r_2(K(\bar{U}))-U_{\infty}$ denotes the set of complex places of $K(\bar{U})$ which do not correspond to a point of $\bar{U}$.
\begin{thm}
For any connected \'etale $\bar{X}$-scheme $\bar{U}$, one has
\begin{eqnarray*}
R^2\gamma_*\mathbb{Z}(\bar{U})&=&(D^1_{K(\bar{U})}/\prod_{U_{\infty}-U(\mathbb{R})}\mathbb{S}^1)^{D}\\
&\simeq& Hom(\mathcal{O}_{K(\bar{U})}^{\times},\mathbb{Q})\oplus\sum_{r_2(K(\bar{U}))-U_{\infty}}\mathbb{Z}.
\end{eqnarray*}
\end{thm}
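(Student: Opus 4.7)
The plan is to evaluate the short exact sequence of sheaves
$$0\rightarrow R^2\gamma_*\mathbb{Z}\rightarrow \phi_*\phi^*R^2\gamma_*\mathbb{Z}\rightarrow\prod_{v\in\bar{X}-Y}u_{v*}\mathbb{Z}\rightarrow0$$
established just above the theorem on the connected \'etale $\bar{X}$-scheme $\bar{U}$. Since the section functor $\Gamma(\bar{U},-)$ is left exact, it identifies $R^2\gamma_*\mathbb{Z}(\bar{U})$ with the kernel of the induced morphism on sections. I then have to compute the two outer terms and recognize the arrow between them as the Pontryagin dual of an explicit inclusion.

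First I would identify the middle term using Proposition~\ref{prop-R2Z-restricted-toY}: since $K(\bar{U}\times_{\bar{X}}Y)=K(\bar{U})$, one has
$$\phi_*\phi^*R^2\gamma_*\mathbb{Z}(\bar{U})=\phi^*R^2\gamma_*\mathbb{Z}(\bar{U}\times_{\bar{X}}Y)=(D^1_{K(\bar{U})})^{D}.$$
The right-hand term is computed as follows: $u_{v*}\mathbb{Z}$ is skyscraper at the complex place $v$ of $F$, and by the Artin--Verdier convention any place of $K(\bar{U})$ lying over a complex place of $F$ is itself complex and lies in $U_\infty$, so
$$\prod_{v\in\bar{X}-Y}u_{v*}\mathbb{Z}(\bar{U})=\prod_{U_\infty-U(\mathbb{R})}\mathbb{Z}.$$
Next I would trace the connecting morphism back to the presheaf-level exact sequence $0\to(C^1_{\bar{U}})^D\to(C^1_{\bar{U}\times_{\bar{X}}Y})^D\to\prod_{U_\infty-U(\mathbb{R})}\mathbb{Z}\to0$ recalled in the preamble, which is the Pontryagin dual of $0\to\prod\mathbb{S}^1\to C^1_{\bar{U}\times_{\bar{X}}Y}\to C^1_{\bar{U}}\to0$. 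Since each factor $\mathbb{S}^1$ is connected, $\prod_{U_\infty-U(\mathbb{R})}\mathbb{S}^1$ lies in $D^1_{\bar{U}\times_{\bar{X}}Y}$, and via the canonical surjection $D^1_{K(\bar{U})}\twoheadrightarrow D^1_{\bar{U}\times_{\bar{X}}Y}$ (obtained from the surjection $C_{K(\bar{U})}\twoheadrightarrow C_{\bar{U}\times_{\bar{X}}Y}$ between connected components) this product is also a closed subgroup of $D^1_{K(\bar{U})}$. The sheafification composite $(C^1_{\bar{U}\times_{\bar{X}}Y})^D\twoheadrightarrow(D^1_{\bar{U}\times_{\bar{X}}Y})^D\hookrightarrow(D^1_{K(\bar{U})})^D$ is compatible with the projection to $\prod\mathbb{Z}$, so the induced map $(D^1_{K(\bar{U})})^D\to\prod_{U_\infty-U(\mathbb{R})}\mathbb{Z}$ is precisely the Pontryagin dual of the inclusion $\prod_{U_\infty-U(\mathbb{R})}\mathbb{S}^1\hookrightarrow D^1_{K(\bar{U})}$. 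Pontryagin duality applied to the exact sequence $0\to\prod\mathbb{S}^1\to D^1_{K(\bar{U})}\to D^1_{K(\bar{U})}/\prod\mathbb{S}^1\to0$ of locally compact abelian groups then identifies the kernel with $(D^1_{K(\bar{U})}/\prod_{U_\infty-U(\mathbb{R})}\mathbb{S}^1)^D$, yielding the first equality.

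For the second isomorphism I would invoke Tate's theorem $D^1_{K}\simeq(\mathbb{V}\otimes_{\mathbb{Z}}\mathcal{O}_K^{\times})\times\prod_{r_2(K)}\mathbb{S}^1$ already used in the proof of Proposition~\ref{prop-R2Z-restricted-toY}. Because $U_\infty-U(\mathbb{R})$ is a subset of $r_2(K(\bar{U}))$, the quotient by $\prod_{U_\infty-U(\mathbb{R})}\mathbb{S}^1$ decomposes as $(\mathbb{V}\otimes\mathcal{O}_{K(\bar{U})}^{\times})\times\prod_{r_2(K(\bar{U}))-U_\infty}\mathbb{S}^1$, and dualizing together with $\mathbb{V}^D=\mathbb{Q}^{DD}=\mathbb{Q}$ produces $Hom(\mathcal{O}_{K(\bar{U})}^{\times},\mathbb{Q})\oplus\sum_{r_2(K(\bar{U}))-U_\infty}\mathbb{Z}$. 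The main obstacle is the bookkeeping in the previous paragraph: one must verify that after sheafification the connecting map on $\bar{U}$-sections really is the Pontryagin dual of the natural archimedean inclusion into $D^1_{K(\bar{U})}$ (rather than into some intermediate group), which requires checking that the composite $\prod\mathbb{S}^1\hookrightarrow D^1_{\bar{U}\times_{\bar{X}}Y}\hookrightarrow C^1_{\bar{U}\times_{\bar{X}}Y}$ coincides, via the surjection from $D^1_{K(\bar{U})}$, with the Tate-theoretic inclusion into $D^1_{K(\bar{U})}$.
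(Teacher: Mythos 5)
Your proof is correct and takes the same route as the paper, which simply appeals to the exact sequence~(\ref{exact-sequ-for-R2Z}) and Proposition~\ref{prop-R2Z-restricted-toY} in a single sentence; you supply the bookkeeping the paper leaves implicit, namely the identification of the $\bar{U}$-sections of the connecting morphism with the Pontryagin dual of the Tate-theoretic inclusion $\prod_{U_\infty-U(\mathbb{R})}\mathbb{S}^1\hookrightarrow D^1_{K(\bar{U})}$. One phrasing to tighten: a surjection $D^1_{K(\bar{U})}\twoheadrightarrow D^1_{\bar{U}\times_{\bar{X}}Y}$ does not by itself make a subgroup of the target a subgroup of the source; what you actually need, and do invoke at the end via Tate's decomposition, is that the $\prod_{U_\infty-U(\mathbb{R})}\mathbb{S}^1$ factor of $D^1_{K(\bar{U})}$ maps isomorphically onto the corresponding subgroup of $D^1_{\bar{U}\times_{\bar{X}}Y}$, which provides the lift making the diagram of duals commute.
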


\begin{cor}
Assume that $F$ is totally imaginary. Then the \'etale sheaf $R^2\gamma_*\mathbb{Z}$ is acyclic for the global sections functor.
\end{cor}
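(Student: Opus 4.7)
The plan is to exploit the short exact sequence (\ref{exact-sequ-for-R2Z}) on $\bar{X}_{et}$, which under the totally imaginary hypothesis becomes
$$0 \to R^2\gamma_*\mathbb{Z} \to \phi_*\phi^*R^2\gamma_*\mathbb{Z} \to \bigoplus_{v \in X_\infty} u_{v*}\mathbb{Z} \to 0,$$
since $Y = X$ and $\bar{X} \setminus Y = X_\infty$ consists exclusively of complex places. The rightmost term is trivial to handle: each $v$ has $G_{k(v)} = 1$, so $u_{v*}$ is the closed embedding from $\underline{Sets}$, whence $H^n(\bar{X}_{et}, u_{v*}\mathbb{Z}) = \mathbb{Z}$ in degree $0$ and vanishes for $n \geq 1$. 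A stalk computation then shows $R^q\phi_* = 0$ for $q \geq 1$ (the local étale cohomology at a complex archimedean place is that of $Spec(\mathbb{C})$), so Leray collapses to $H^n(\bar{X}_{et}, \phi_*\phi^*R^2\gamma_*\mathbb{Z}) = H^n(X_{et}, \phi^*R^2\gamma_*\mathbb{Z})$.

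By Proposition \ref{prop-R2Z-restricted-toY} we have $\phi^*R^2\gamma_*\mathbb{Z} = \xi_*\mathcal{F}$ where $\mathcal{F}$ decomposes functorially on $Spec(F)_{et}$ as $\mathcal{F}_1 \oplus \mathcal{F}_2$, with $\mathcal{F}_1(K) = Hom(\mathcal{O}_K^\times, \mathbb{Q})$ and $\mathcal{F}_2(K) = \bigoplus_{r_2(K)}\mathbb{Z}$, and I treat the two summands separately. The stalk of $\mathcal{F}_2$ at $\bar{F}$ is $\bigoplus_{v \in X_\infty(F)} Hom_{cont}(G_F, \mathbb{Z})$, a sum of modules coinduced from the trivial subgroup (complex decomposition groups being trivial), so Shapiro's lemma forces $H^n(G_F, \mathcal{F}_2(\bar F)) = 0$ for $n \geq 1$; restricting the coinduction to any inertia subgroup $I_w$ shows $R^q\xi_*\mathcal{F}_2 = 0$ for $q \geq 1$, whence $H^n(X_{et}, \xi_*\mathcal{F}_2) = 0$ for $n \geq 1$ with $H^0 = \bigoplus_v \mathbb{Z}$. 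For $\mathcal{F}_1$, Dirichlet's unit theorem provides a short exact sequence of \'etale sheaves
$$0 \to \underline{\mathbb{Q}} \to \mathcal{F}_2 \otimes \mathbb{Q} \to \mathcal{F}_1 \to 0,$$
the first map being the diagonal embedding of $\mathbb{Q}$ into the archimedean place-sum. Combined with $H^n(G_F, \mathbb{Q}) = 0$ for $n \geq 1$ (a profinite group is acyclic in uniquely divisible coefficients, via restriction-corestriction) and the Shapiro-type vanishing for $\mathcal{F}_2 \otimes \mathbb{Q}$, this yields $H^n(X_{et}, \xi_*\mathcal{F}_1) = 0$ for $n \geq 1$ and identifies $H^0$ with $\mathbb{Q}^{r_2(F) - 1} = Hom(\mathcal{O}_F^\times, \mathbb{Q})$.

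Plugging everything into the long exact sequence on $\bar{X}_{et}$ gives $H^n(\bar{X}_{et}, R^2\gamma_*\mathbb{Z}) = 0$ for $n \geq 1$, provided one verifies that the middle map $H^0(\bar{X}_{et}, \phi_*\phi^*R^2\gamma_*\mathbb{Z}) = Hom(\mathcal{O}_F^\times, \mathbb{Q}) \oplus \bigoplus_v \mathbb{Z} \to \bigoplus_v \mathbb{Z}$ is the projection onto the second summand; this last point is a direct trace through the construction of the sequence (\ref{exact-sequ-for-R2Z}), dual to the quotient map $C^1_X \twoheadrightarrow C^1_{\bar X}$. The main obstacle is the $\mathcal{F}_1$ step: unlike $\mathcal{F}_2$, the sheaf $\mathcal{F}_1$ is not itself of coinduced type, so one cannot invoke Shapiro directly; instead one must use the Dirichlet exact sequence and check its Galois-equivariance, which reduces the computation to the two tractable cases of $\underline{\mathbb{Q}}$ and $\mathcal{F}_2 \otimes \mathbb{Q}$.
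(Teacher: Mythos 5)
Your argument is essentially the paper's: reduce via the exact sequence (\ref{exact-sequ-for-R2Z}) to the sheaf $\phi_*\phi^*R^2\gamma_*\mathbb{Z}=\xi_*\mathcal{F}$, then split $\mathcal{F}$ along Tate's decomposition into the $\mathbb{Q}$-vector-space summand $\mathcal{F}_1$ and the induced summand $\mathcal{F}_2$, the latter disposed of by Shapiro's lemma. The only divergence is your detour through the Dirichlet sequence for $\mathcal{F}_1$: since $\mathcal{F}_1(K)=Hom(\mathcal{O}_K^{\times},\mathbb{Q})$ is already a $\mathbb{Q}$-vector space, the restriction--corestriction vanishing you invoke for $\underline{\mathbb{Q}}$ applies directly to $\mathcal{F}_1$ and to all its local stalks, which is exactly the shortcut the paper takes and spares you from having to establish the Galois-equivariance of the regulator sequence at the sheaf level.
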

\begin{proof}
In view of the exact sequence (\ref{exact-sequ-for-R2Z}), it is enough to show that $\phi_*\phi^*R^2\gamma_*\mathbb{Z}$ is acyclic for the global sections functor. Here we denote by $\xi:Spec(F)_{et}\rightarrow\bar{X}_{et}$ the morphism induced by the inclusion of the generic point, and by $\mathcal{F}$ the presheaf on $Et_{Spec(F)}$ sending a finite extension $K/F$ to $(D^1_{K})^D$. Then we have
$\phi_*\phi^*R^2\gamma_*\mathbb{Z}=\xi_*\mathcal{F}$. But for any finite Galois extension $K/F$ of group $G$, the $G$-module $(D^1_{K})^D$ is the product of a $\mathbb{Q}$-vector space by an induced $G$-module. It follows that the sheaf $\xi_*\mathcal{F}$ is acyclic for the global sections functor.
\end{proof}

\end{document}